\def\cA{\mathcal{A}}
\definecolor{bp}{rgb}{0.2, 0.2, 0.6}
\theoremstyle{definition}
\newtheorem{definition}{Definition}
\newtheorem{theorem}{Theorem}
\newtheorem{lemma}{Lemma}[section]
\newtheorem{sublemma}{Sublemma}[section]
\newtheorem{corollary}{Corollary}[section]
\newtheorem{proposition}{Proposition}[section]
\newtheorem{question}{Question}[section]
\newtheorem{remark}{Remark}[section]
\numberwithin{equation}{section}
\definecolor{OliveGreen}{rgb}{0,0.6,0}
\def\R{\mathbb{R}}
\def\Z{\mathbb{Z}}
\def\T{\mathbb{T}}
\def\N{\mathbb{N}}
\def\a{{\bf a}}
\theoremstyle{definition}
\newtheorem*{inner}{\innerheader}
\newcommand{\innerheader}{}
\newenvironment{namedtheorem}[1]
 {\renewcommand\innerheader{#1}\begin{inner}}
 {\end{inner}}
\title{A non-mixing Arnold flow on a surface}
\author{Bassam Fayad, Adam Kanigowski, Rigoberto Zelada}
\begin{document}
\maketitle
\begin{abstract}

We construct a smooth area preserving flow  on a genus $2$ surface with exactly one open uniquely ergodic component, that is asymmetrically bounded by separatrices of non-degenerate saddles and that is       nevertheless not mixing. 
 \end{abstract}

\tableofcontents

\section{Introduction}

\subsection{Non-mixing Arnold flows} 

Area preserving flows on surfaces form the most basic class of continuous time conservative dynamics.  These flows are often called {\em multi-valued, or locally, Hamiltonian flows}, following the terminology introduced by S.~P.~Novikov \cite{novikov82},  who 
studied them in his elaboration of a Morse theory of pseudoperiodic manifolds.  

Smooth conservative surface flows preserve by definition a smooth area form, hence their flow lines 
form a foliation induced by the symplectic dual of a closed 1-form, which is locally  given by the exterior derivative of a multi-valued Hamiltonian function. Besides their intrinsic importance in topology and geometry, the study of these flows is motivated by solid state physics \cite{novikov95}. They are a special case of  foliations induced by closed 1-forms on a compact manifold $M$, the study of which was thoroughly developed in the last decades since foundational works by Novikov, Arnold, Zorich, Dynnikov, and others. We refer to \cite{FFK} and \cite{chaika} and the survey \cite{CUsurvey} for accounts regarding the literature concerned with the statistical behavior of 
multi-valued Hamiltonian flows.


{It follows from independent works of  Mayer \cite{May} , Levitt \cite{Lev}, and Zorich \cite{Zor}, that each smooth area-preserving flow can be decomposed into finitely many integrable components and quasi-minimal components: an integrable component is a subsurface (possibly with boundary) on which all orbits are closed and periodic (topologically these components are discs or cylinders); quasi-minimal components (there are not more than $g$ of them) are subsurfaces (possibly with boundary) on which the flow is quasi-minimal in the sense that all trajectories that do not converge to the singularities are dense.  Moreover each component is bounded by separatrices which correspond to orbits whose forward or backward trajectory hits a singularity of the flow.

 Moreover, on each  minimal component, the flow can be represented as a special flow above interval exchange transformation (IET) where the discontinuities of the IET correspond to orbits that meet singularities of the flow. The ceiling function of the special flow is smooth away from the discontinuities where it has  infinite asymptotic values. 
The asymptotes depend on the type of the singularities. When the singularities are non-degenerate these asymptotes are logarithmic. 

 In~\cite{arnold}, Arnol'd showed considered multi-valued Hamiltonian flows with non-degenerate saddle points on the torus that have a phase portrait that decomposes into elliptic islands (topological disks bounded by saddle connections and filled up by periodic orbits) and one open uniquely ergodic component. The roof function for these examples has typically asymmetric logarithmic singularities since the coefficient in front of the logarithm is twice as big on one side of the singularity as the one on the other side, due to the existence of homoclinic loops (see Figure  \ref{FigSaddle}). This also happens typically for flows on higher genus surfaces provided that there are homoclinic loops. We will call Arnold flow (or Arnold component) the flow on the open minimal component in the (logarithmic) asymmetric case.}

\begin{definition} An Arnold flow is a special flow above an IET and under a roof function that is smooth except at the discontinuities of the IET where it has  logarithmic asymptotes, and such that the sum of the coefficients in front of the increasing logarithmic asymptotes is different from the sum of the coefficients in front of the decreasing logarithmic asymptotes. 
\end{definition}

\begin{figure}[htb]
 \centering
 \resizebox{!}{5.4cm} {\includegraphics[angle=1]{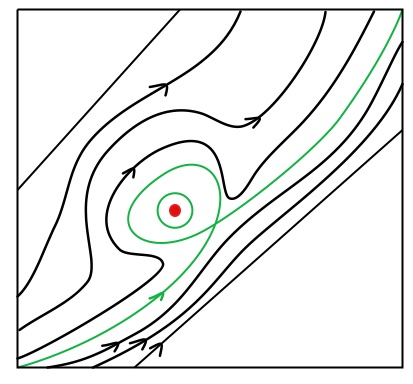}}
  \caption{\small Multivalued Hamiltonian flow. When there is a loop, starting at the same distance from the separatrix, the orbits passing to the left of the saddle 
spend approximately twice longer time comparing 
to the orbits passing to the right of the saddle.}
 \label{FigSaddle}
\end{figure}

Arnol'd conjectured that  in this case the flow will in general be mixing on the open ergodic component. This conjecture is now known to hold for typical Arnold flows due to a series of results:

1) K.~Khanin and Ya. G.~Sinai \cite{KS} gave the first mixing examples of special flows above a class of circle rotations and with a ceiling function having two asymmetric logarithmic asymptotes.
 
2) A. Kochergin~\cite{Koc4,Koc9} extended the result of \cite{KS} to include all irrational rotation angles and all asymmetric logarithmic ceiling functions (with any finite number of asymptotes). 

  { 
3) C. Ulcigrai \cite{CU1} showed that mixing holds for Arnold flows above a full measure set of interval exchange transformations and with the roof function having one (asymmetric) singularity.

4) D. Ravotti \cite{Rav} extended the results of \cite{CU1} for Arnold flows above  a full measure set of interval exchange transformations and the roof function having finitely many singularities (in fact Ravotti obtained quantitative mixing).
}

Our goal here is  to give an example of an Arnold flow that is not mixing. Our main result is:
\begin{theorem}\label{mejn}There exists a smooth area preserving flow on a genus $2$ surface which has four integrable and one uniquely ergodic Arnold component that is not mixing. 
\end{theorem}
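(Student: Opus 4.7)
The plan is to build the example explicitly from a tuned multi-valued Hamiltonian, reduce the dynamics on the Arnold component to a special flow over a Liouvillean base IET, and establish non-mixing through a rigidity estimate on the Birkhoff sums of the roof function. I would start by fixing on the genus $2$ surface a smooth closed $1$-form with four non-degenerate Morse saddles, each carrying a homoclinic loop, arranged so that the complement of the separatrix configuration splits into four open topological disks (the integrable components, filled by closed orbits of the dual flow) and one open minimal component $M$. Each loop forces the orbits passing on the thick side of its saddle to spend roughly twice the time of those passing on the thin side, so the induced dynamics on $M$ is an Arnold flow. Choosing a transversal $I\subset M$, the return map becomes an IET $T\colon I\to I$ with discontinuities $a_1,\dots,a_m$ (corresponding to the saddle separatrices), and the flow on $M$ is conjugate to the special flow over $T$ under a roof $\varphi\colon I\to(0,\infty]$ that is smooth away from the $a_j$ and has left asymptote $-B_j\log(a_j-x)$ and right asymptote $-A_j\log(x-a_j)$ at each $a_j$, with $A_j\neq B_j$ (the Arnold condition). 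The parameters $(A_j,B_j,a_j)$ and the combinatorial/length data of $T$ are free parameters that are all controlled by the defining $1$-form.

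The heart of the argument is to choose $T$ Liouvillean, with a fast sequence of approximate return times $q_n\to\infty$ such that $T^{q_n}$ is super-exponentially close to the identity on a subset $E_n\subset I$ of measure tending to $1$, and simultaneously to tune the singularity data so that
\[
S_{q_n}\varphi(x)\;=\;\sum_{k=0}^{q_n-1}\varphi(T^k x)
\]
remains within a bounded distance of a constant $c_n$ for every $x$ in a subset $E_n'\subset E_n$ of uniformly positive measure. Splitting each singularity into its symmetric part (with matched coefficients $\tfrac{A_j+B_j}{2}$ on both sides) and its antisymmetric part (with opposite coefficients $\pm\tfrac{A_j-B_j}{2}$), the symmetric contributions along Ostrowski denominators have bounded fluctuations by Denjoy--Koksma, which is the Kochergin symmetric non-mixing mechanism. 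The antisymmetric contributions are precisely what drives the Sinai--Khanin--Ulcigrai--Ravotti shearing in all known mixing proofs for Arnold flows, and I would suppress them on $E_n'$ by engineering cancellations \emph{across distinct saddles}: the $a_j$ are placed in a rationally related position with respect to the rotation data of $T$ so that, along the orbit of any $x\in E_n'$, each thick-side visit to one saddle is matched by a thin-side visit to another, producing a global cancellation of the antisymmetric terms in $S_{q_n}\varphi$. Partial rigidity of $\phi_t|_M$ then follows: for $x\in E_n'$, performing $q_n$ returns to $I$ takes flow time $S_{q_n}\varphi(x)\approx c_n$, so with $t_n:=c_n$ the map $\phi_{t_n}$ displaces a set of definite mass by arbitrarily little, contradicting mixing. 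Unique ergodicity of $M$ is verified separately via Katok's criterion from the explicit Liouville structure of $T$.

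The main obstacle is this cancellation step: for a single asymmetric saddle the Sinai--Khanin stretching is unavoidable, so the suppression must be a genuine interaction between distinct singularities visited in a prescribed order under $T$. Synchronizing this combinatorial order with the Liouvillean denominator sequence $q_n$ uniformly on a set of positive measure is the technical heart of the construction, and it is what forces the specific genus $2$ configuration with exactly four integrable components appearing in the statement, since fewer saddles would provide too few cancellation partners while additional ones would generically destroy the cancellation.
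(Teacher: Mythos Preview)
Your proposal has the right overall architecture (reduce to a special flow, seek partial rigidity via bounded oscillation of $S_{q_n}\varphi$ on a set of positive measure), but the cancellation mechanism you describe is not the one that works, and the one you sketch does not obviously go through.

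You propose to kill the antisymmetric shear by matching thick-side visits to one asymmetric saddle with thin-side visits to another. Over a circle rotation each asymmetric singularity contributes the \emph{same} sign $\sim q_n\log q_n$ to $S_{q_n}\varphi'$ regardless of its position (this is Kochergin's computation), so placing the $a_j$ in ``rationally related positions'' cannot produce opposite-sign contributions; and over a genuinely higher-genus IET you lose the Denjoy--Koksma control you invoke for the symmetric part (polynomial deviations, cf.\ Zorich--Forni), so your two-step decomposition symmetric/antisymmetric breaks down on both sides. The paper's mechanism is different and more subtle: the base is not a generic IET but a $\mathbb{Z}_2$-skew product $T(x,j)=(x+\alpha,\,j+\chi_J(x+\alpha))$ over a single rotation. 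The genuinely asymmetric part $g$ of the roof is independent of the second coordinate, so $S_{q_n}(T,g')=S_{q_n}(R_\alpha,\gamma')\sim q_n\log q_n$ is handled by ordinary Denjoy--Koksma over the rotation. The cancellation comes from an \emph{additional symmetric} singularity $h(x,j)=A\,\delta_1(j)\,|\log\|x\||$ living only on the level $\{1\}$: because the skew product visits the two levels in a highly unbalanced way near $x=0$ (encoded by the sets $U_{\alpha,m}$), one has $S_{q_n}(T,h')\sim -q_n\log q_n + q_n\Phi$ for $(x,j)\in U_{\alpha,m}$, with $\Phi$ a computable constant. Tuning $A$ and a sparse sequence $t_k$ so that $\log q_{t_k}\approx 60\,\Phi_k$ then gives $|S_{q_{t_k}}(T,f')|=O(q_{t_k})$, which is condition B3 in the non-mixing criterion. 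In short: the asymmetry in the roof is not cancelled by another asymmetric saddle, it is cancelled by the asymmetry of the \emph{base dynamics} acting on a symmetric singularity supported on one sheet.

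Two further points. First, your ``super-exponentially Liouvillean'' assumption is much stronger than needed (the paper only uses $\log q_{t_k}=o(a_{t_k+1})$) and is in tension with unique ergodicity of the skew product, which is delicate here: the paper proves it separately (Section~8) by exploiting conditions on $(a_{n_k+1})$ along the subsequence $(n_k)$ defining $J$, not by a generic Katok-type criterion. Second, the claim that ``fewer saddles would provide too few cancellation partners'' misidentifies the role of the four loops: two of them (the green ones) are placed on opposite sides of the same orbit precisely to manufacture a \emph{symmetric} extra singularity on level $\{1\}$, not to provide asymmetric cancellation partners.
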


\subsection{Statistical behavior of area preserving flows on surfaces. The global picture}

Let us consider our result within the bigger picture of statistical behavior of conservative surface flows.
 When statistical properties on an open ergodic component are studied, there are, depending on the singularities of the flow, three main different scenarios.

As explained above, the flows can be viewed as special flows above IETs with ceiling functions smooth away from the discontinuities of the IET and  
 having infinite asymptotic values at the discontinuities.  The three {main cases studied in the literature are}  $(i)$ and  $(ii.a)$ and  $(ii.b)$ below.\footnote{We believe that in the case of analytic locally Hamiltonian flows degenerate singularities always produce power singularities. If this is the case then any analytic locally Hamiltonian flow belongs to either one of  $(i)$,  $(ii.a)$ or $(ii.b)$.}

\medskip 

$(i)$ The ceiling function has at least one power-like asymptote. This is the case where the flow has at least one degenerate singularity.

$(ii)$ All asymptotes are logarithmic. This holds when all the singularities of the flow are non-degenerate.

$(ii.a)$ Symmetric case: The sum of the coefficients in front of the increasing logarithmic asymptotes sums up to the same amount as the coefficients in front of the decreasing logarithmic asymptotes. 

$(ii.b)$ Asymmetric case: Arnold flows.

In case $(i)$, Kochergin showed that mixing always holds, \cite{Koc2}. Fraction-polynomial rate of mixing is  typically expected and was proved above a full measure set of rotations \cite{Fa_SMF}. Countable Lebesgue spectrum is also typically expected for such flows and was proved above a full measure set of rotations \cite{FFK}. The latter two results are not yet investigated above general IETs. The idea behind mixing is that the shear caused by any power-like asymptote is sufficient to produce mixing and cannot be compensated by any other asymptote.

In case  $(ii.a)$, absence of mixing is the typical outcome, as proved by Kochergin for special flows above irrational rotations \cite{Koc2,Koc4}, and by Ulcigrai for typical IETs \cite{CU3}.


The idea behind the absence of mixing in Kochergin's proof is that for symmetric logarithmic singularities, a Denjoy-Koksma like property (DK property) holds above irrational rotations that prevents mixing of the special flow.  Denjoy-Koksma times are integers for which the Birkhoff sums have a bounded oscillation around the mean value on all or on a positive measure proportion of the base (see for example  the discussion around 
DK property in \cite{DF}). 

In higher genus, the situation is more delicate because 
of polynomial deviations of Birkhoff sums from the mean \cite{Zorich3}, \cite{forni}. However, Ulcigrai \cite{CU3}
 proved that, despite these deviations, for almost all IET's there are still sufficient cancellations to prevent mixing.  A different cancellation mechanism was found slightly earlier by Scheglov \cite{scheglov}, but that was special to the genus $2$ context.

 However,  as proven by Chaika and Wright \cite{chaika}, these cancellations do not happen for all {uniquely ergodic} IETs, because for some IETs the lack of uniformity in the convergence of Birkhoff sums due to polynomial deviations is important and makes mixing possible.  Indeed, the examples of \cite{chaika} have symmetric logarithmic singularities but are mixing.

\medskip 

In case $(ii.b)$, mixing holds typically. As mentioned earlier, this was proved by { Ravotti \cite{Rav}} and Ulcigrai \cite{CU1}  after earlier works by Khanin and Sinai \cite{KS}, and Kochergin ~\cite{Koc4,Koc9}.

Our contribution here is to show that mixing in case  $(ii.b)$ may fail the same way as {non-mixing in case}  $(ii.a)$ may fail. As a matter of fact, our example for absence of mixing under an asymmetric ceiling functions is directly inspired by the example  of mixing under a symmetric ceiling functions of \cite{chaika}.

The mechanism behind absence of mixing in our examples is dual to that of \cite{chaika}: There, the asymmetry in the IET dynamics disrupts the cancellation in the shear and gives mixing. In our example, the asymmetry in the IET dynamics  compensates exactly the asymmetry in the ceiling function and ends up yielding a Denjoy-Koksma like property on a positive part of the space which overrules mixing.
This exact compensation requires delicate estimates along a subsequence of time that are not necessary for \cite{chaika}. On the other hand, and unlike \cite{chaika}, we do not need to estimate the Birkhoff sums along all times but only along a subsequence to prove that mixing fails.

\medskip 
\subsection{The explicit construction}\label{sec:smooth}
We describe now the concrete example in Theorem \ref{mejn}. The construction is similar to the one in \cite{chaika}. We start with the vertical translation flow on a genus  two surface obtained by glueing two identical tori each sheared by $\alpha$ and that are glued along an identical slit as in Figure \ref{fig_slit'}. On the figure, the slit is the interval delimited by two red crosses. The first return map $T$ to the union of the two horizontal circles has discontinuities at $-\alpha\times\{j\}$ and $-\alpha+|J|\times \{j\}$, $j=1,2$ (two orange intervals in figure \ref{fig_slit'}). It is a $\Z^2$ extension over the interval $J$ of the irrational rotation by $\alpha$. The green loops add an artificial (symmetric) singularity which is not a discontinuity of $T$.  The vertical flow is $C^\infty$ outside two cone points (the identified red crosses) at which the cone angle is $4\pi$, the flow is singular at those points. However by a time change around those points (a slow down) one can now get a globally smooth flow (defined everywhere on the surface). The two cone points become fixed points of the time-changed flow. This procedure is described in detail in \cite{CFr} (see also \cite{chaika}).
 The first return map to the base is still $T$ and the return time function is smooth except above the pre-images of the two fixed points at which it blows up logarithmically.  To produce asymmetric logarithmic singularities we glue in two identical asymmetric loops on the pair of orbits starting at $-\alpha+|J|$ (see the pair of blue lines in Figure \ref{fig_slit'} and the two blue loops in Figure \ref{fig_slit2}). This will produce asymmetric logarithmic singularities oriented identically at $(-\alpha+|J|,i)$, $i=0,1.$  Moreover we glue in two extra identical asymmetric loops on opposite sides of the orbit of the point $0$ (see the green line in Figure \ref{fig_slit'} and the green orbit with two opposite loops in Figure \ref{fig_slit2}). This will produce an extra symmetric singularity at the point $0\times \{1\}$ (as the asymmetric contributions of each green loop cancel out perfectly to yield a symmetric one).  We refer to Section 7 of \cite{CFr} to see how these asymmetric loops can be glued in in a smooth way. 

Summarizing,  this construction gives a flow on a genus $2$ surface with $4$  integrable components inside the loops and exactly one open ergodic component represented by a special flow above  $T:\T\times \Z^2\to \T\times \Z^2$  and under a ceiling function $f$ that has $4$ singularities at $(-\alpha,j)$ and $(-\alpha+|J|,j)$, $j=0,1$.  The ceiling function $f$ has symmetric logarithmic singularities over $(|J|-\alpha,j)$ of the form $-\log x$ (see formulas \eqref{eq:g}--\eqref{eq:f} with $x_1=|J|-\alpha$),  asymmetric singularities over $(-\alpha,j)$  of the form $-2\log x$ and $-3\log x$ (see formulas \eqref{eq:g}--\eqref{eq:f} with $x_0=-\alpha$) and an extra symmetric singularity over $0\times\{1\}$ of the form $A\log x$ (see formula \eqref{eq:h}).

\begin{figure} 
\centering
 \resizebox{!}{5.4cm} {\includegraphics[angle=1]{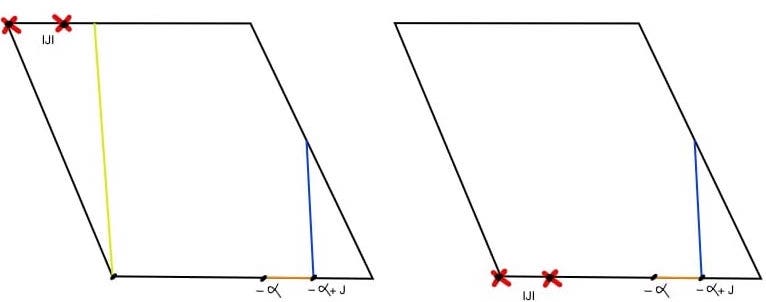}}
\caption{\small Genus 2 surface -- two tori glued along a slit. The two identical tori are sheared by the number $\alpha$. The vertical flow on the surface is smooth except a pair of two cone points (pairs of identified red crosses) at which the cone angle is $4\pi$. The first return map $T$ acts on the union of the two copies of $[0,1)$. Its discontinuities are located at $(-\alpha,i)$ and $(-\alpha+|J|,i)$, where $i=0,1$. The green and the pair of blue lines correspond to orbits along which we glue in smoothly asymmetric loops.} \label{fig_slit'}
\end{figure}

\begin{figure} \label{fig_slit2}
 \resizebox{!}{5.4cm} {\includegraphics[angle=1]{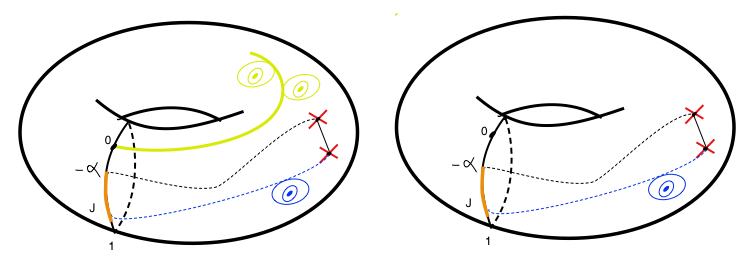}}
\caption{\small After the slowdown, the (identified) pairs of red crosses represent the two symmetric saddles. Their preimages along the dotted curves give discontinuities of $T$. There is an extra asymmetric loop on a pair of discontinuities (in blue). Finally there are two asymmetric loops placed symmetrically over $0\times \{1\}$ (in green). They asymmetric contributions cancel out and hence they produce an extra symmetric singularity at $0\times \{1\}$.}  \label{fig_slit2}
\end{figure}
The use of a base interval exchange transformation of the form of $T$ is similar to the one used by Chaika and Wright \cite{chaika}, that was in turn inspired by \cite{veech,katok,sataev}. 
Observe that by a result of Kochergin (Theorem 2 in \cite{Kocc}), if follows that any smooth flow with (strongly) asymmetric singularities on the $\T^2$ is mixing and so our example is optimal in terms of the genus of the surface. 
The return time above the points $z_1$ and $z_2$ has identical asymmetric logarithmic asymptotes whose global contribution is therefore  asymmetric logarithmic.  
The use of a base interval exchange transformation of the form of $T$ is similar to the one used by Chaika and Wright \cite{chaika}, that was in turn inspired by \cite{veech,katok,sataev}. 
Observe that by a result of Kochergin (Theorem 2 in \cite{Kocc}), if follows that any smooth flow with (strongly) asymmetric singularities on the $\T^2$ is mixing and so our example is optimal in terms of the genus of the surface. 
 Note that we glue two identical saddle loops (blue loops in Figure  \ref{fig_slit2}). It then follows that the study of the contribution to ergodic sums over $T$ of the blue parts reduces to studying ergodic sums over the rotation by $\alpha$ and for one blue loop. By a more careful analysis of the ergodic sums of one of the blue parts over $T$ it seems possible to glue in just one blue loop. This would result in a smooth flow on a genus $2$ surface with only $3$ saddle loops. The two green asymmetric loops are introduced to obtain a symmetric singularity over  $0\times \{1\}$. It seems that by passing to the $\Z^2\times \Z^2$ extension, analogously to the construction in Chaika-Wright, \cite{chaika} one can in fact produce a symmetric singularity over $\{0\}\times \{1\}$ which comes from a simple saddle and not  two (opposite) saddle loops.  This would give a smooth flow on a surface of genus $5$ with only $1$ saddle loop and such that on the ergodic component the flow has one asymmetric singularity and is  not mixing. It is therefore natural (in parallel to \cite{chaika}) to ask the following:
\begin{question}
Does there exist a smooth flow on a genus two surface with only non-degenerated saddles and only two invariant components (one integrable and one minimal)?
\end{question}

 \section{A special flows above a $\Z_2$ extension of a circle rotation} \label{sec2}

We now give a formal definition of the special flow with base dynamics $T$ and ceiling function $f$ that was described in Section \ref{sec:smooth}. It will be characterized by the choice of $\alpha_0$, $J$ and $A$. The map $T$ is a $\Z_2$ extension of a circle rotation by an irrational $\alpha_0\in (0,1)$ with \textit{ad-hoc} Diophantine properties.  
 In order to prove the non-mixing property of $(T^f_t)$, the special flow over $T$ and under $f$, we will need the Diophantine properties of $\alpha_0=\alpha$ to guarantee the existence of  a sequence $(r_n)_{n\in\N}$ along which we have fine control up to bounded oscillations of ergodic sums of the roof function $S_{r_n}(T,f)(x)$, for a positive measure set of points $x\in \T\times \{0,1\}$. For this, the construction of $\alpha_0$ needs some more specification than in the work of  Chaika and Wright \cite{chaika}.\\

   Let $R=R_{\alpha_0}:\T\to \T$, 
 $R(x)=x+\alpha_0$ and let $(q_n)_{n\in\N}$ denote the sequence of denominators of $\alpha_0$. For an interval $J\subset \T$ define 
 \begin{equation}\label{eq:skew}
  T=T_{\alpha_0,J}:\T\times \Z_2\to \T\times \Z_2, \;\;\;\;\;\;\;\ T(x,j)=\Big(x+\alpha_0, j+\chi_J(R(x))\Big).
 \end{equation}
 The map $T$ preserves the Lebesgue measure on $\T\times \Z_2$.
 Notice that (without loss of generality) the map $T$ has $4$ discontinuities:
 \begin{align}\label{eq:discontinuitiesOfT}
     z_{1}=(x_0,0), && z_{2}=(x_0,1), && z_{3}=(x_1,0), &&\text{ and } z_{4}=(x_1,1),
 \end{align}
 where $x_0=-\alpha_0$ and $x_1=|J|-\alpha_0$. 
 We will define the roof function $f$ to have logarithmic singularities at discontinuities of $T$ with an extra discontinuity at 
 \begin{equation}\label{eq:ExtraDiscontinuity}
     z_0=(0,1)\in\mathbb T\times\Z_2.
 \end{equation}
 In fact, $f$ will have asymmetric logarithmic singularities at $z_{1}$ and $z_{2}$, symmetric logarithmic singularities at $z_{3}$ and $z_{4}$, and a symmetric logarithmic singularity over $z_0$. In particular, $f$ will have asymmetric logarithmic singularities.\\
 Before defining the function $f$ we would like to introduce some notation.
 For any $r\in \R$, we let 
 $$\log^+(r)=\begin{cases}
\log(r),\text{ if }r>0,\\
0,\text{ if }r\leq 0
 \end{cases}
 $$
 and set $\|r\|$ to denote the distance from $r$ to the closest integer, meaning that
$$\|r\|=\inf_{n\in\Z}|r-n|.$$
 We define $g:\T\times\Z_2\rightarrow\R_+$ by 
 \begin{equation}\label{eq:g}
 g(x,j)=1+ 2\Big|\log(\|x-x_0\|)\Big|+\Big|\log^+(x_0-x)\Big| +
 \Big|\log(\|x-x_1\|)\Big|,
 \end{equation}
where, while computing $\log^+(x_0-x)$, we identify $x_0$ with the irrational $1-\alpha_0$ and $x\in\T$ with its unique representative in $[0,1)$. For any given $A>1$, we also let $h=h_A:\T\times\Z_2\rightarrow\R_+$ be defined by 
 \begin{equation}\label{eq:h}
 h(x,j)=A\cdot\delta_1(j)\cdot \Big|\log(\|x\|)\Big|.
 \end{equation}
 We now define $f=f_{A,\a_0,J}:\T\times\Z_2\rightarrow \R_+$ by 
 \begin{equation}\label{eq:f}
 f(x,j)=g(x,j)+h(x,j).
 \end{equation}
 Note that (a) $f$ has asymmetric logarithmic singularities over $z_1$ and $z_2$, (b) symmetric singularities over $z_3$ and $z_4$,  and (c) another symmetric singularity at  $\{0\}\times \{1\}\in \T\times \Z_2$.
 \begin{remark}
     Observe that the domains of $f'$, $g'$, and $h'$ are not the same as those of $f$, $g$, and $h$ respectively. Throughout this work we will replace  $f'$, $g'$, and $h'$, by right continuous functions having the same domain as $f$, $g$, or $h$. So, for example, in Proposition \ref{prop:h_1=phi} we will let 
     $$h_1'(x,j)=\chi_{\{1\}}(j)\frac{\chi_{[1/2,1)}(x)-\chi_{(0,1/2)}(x)}{\|x\|}.$$
     (Note that, unlike $h_1'$ above,  the derivative of $h_1$ is not defined at $(1/2,1)$.) We will employ similar extensions for $f''$, $g''$, and $h''$.
 \end{remark}
 In Section \ref{sec:smooth} we explain how the corresponding special flow corresponds to a smooth flow on a genus $2$ surface.
Our main theorem that implies Theorem \ref{mejn}, is:
\begin{namedtheorem}{Theorem A}\hypertarget{thm:main2}{}
There exists $\alpha_0\in \T$, $J\subset \T$ and $A>1$ such that the special flow build over $T=T_{\alpha_0,J}$ and under the roof function $f=f_A$ is not mixing. Furthermore, the IET $T$ is uniquely ergodic.
\end{namedtheorem}
In the sequel we will denote by $T^f$  the special flow defined on Theorem  \hyperlink{thm:main2}{A}.

Before we finish this section, we make some observations about the space  $\T\times\Z_2$. We will denote the normalized Lebesgue measure on $\mathbb T$ by $\lambda'$ and the normalized Lebesgue measure on 
$\T\times \Z_2$ by $\lambda$. In other words, when we identify $\Z_2$ with $\{0,1\}$,
$$\lambda=\frac{1}{2}(\lambda'\otimes \delta_{0}+\lambda'\otimes \delta_{1}).$$ 
Note that the map $(x,y)\mapsto \|x-y\|$ defined on $\T\times \T$ defines a metric generating the topology of $\T$. We will  let $d$ be the metric defined on $\T\times\Z_2$ by 
\begin{equation}\label{eq:MetricOfTxZ_2}
d((x,i),(y,j))=\|x-y\|+\delta_i(j).
\end{equation}
We remark that $d$ generates the product topology of $\T\times \Z_2$.\\
The following proposition collects some of the basic properties of the metric $d$. We omit the proof. 
\begin{proposition}\label{prop:BasicPropertiesOfd}
Let $T:\T\times\Z_2\rightarrow\T\times\Z_2$ be as in \eqref{eq:skew} and let $i,j\in\Z_2$. For any $x,y\in \T$,
\begin{enumerate}
\item [(i)] If $d((x,i),(y,j))<1$, then $i=j$. 
\item [(ii)] $d((x,j),(y,j))=\|x-y\|$.
\item [(iii)] If $T(x,j),T(y,j)\in \T\times\{i\}$, then  $d((x,j),(y,j))=d(T(x,j),T(y,j))$.
\end{enumerate}
\end{proposition}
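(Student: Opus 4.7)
The proposition is a straightforward unpacking of the definition \eqref{eq:MetricOfTxZ_2} of $d$ together with the fact that $\delta_i(j)\in\{0,1\}$ takes the value $0$ exactly when $i=j$ and $1$ otherwise. My plan is simply to verify each of the three items in turn, keeping in mind that all three are essentially immediate.

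For (i) I would observe that, by definition, $d((x,i),(y,j))=\|x-y\|+\delta_i(j)$, where $\|x-y\|\geq 0$ and $\delta_i(j)\in\{0,1\}$. Thus the hypothesis $d((x,i),(y,j))<1$ forces $\delta_i(j)=0$, which is precisely the statement $i=j$. For (ii) I would just substitute $i=j$ into the definition: $\delta_j(j)=0$, hence $d((x,j),(y,j))=\|x-y\|$. So far there is nothing to prove beyond reading off the definition.

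The only substantive (but still routine) verification is (iii). Using the definition \eqref{eq:skew} of $T$, I would write $T(x,j)=(x+\alpha_0,\,j+\chi_J(R(x)))$ and $T(y,j)=(y+\alpha_0,\,j+\chi_J(R(y)))$. The assumption that both images lie in $\T\times\{i\}$ tells me that their second coordinates agree, so by part (ii) applied to this common fiber
\[
d\big(T(x,j),T(y,j)\big)=\big\|(x+\alpha_0)-(y+\alpha_0)\big\|=\|x-y\|.
\]
Here the equality $\|(x+\alpha_0)-(y+\alpha_0)\|=\|x-y\|$ is just translation invariance of the metric $\|\cdot\|$ on $\T$. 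Combining with (ii) applied to the left-hand side gives $d((x,j),(y,j))=\|x-y\|=d(T(x,j),T(y,j))$, as required.

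There is no real obstacle anywhere: the only thing to notice is that the hypothesis in (iii) is used solely to guarantee that the two images share a second coordinate, so that (ii) applies and the discrete part of $d$ contributes nothing on either side. It is precisely because the $\Z_2$-cocycle $\chi_J\circ R$ takes the same value at $x$ and $y$ (which is what the hypothesis encodes) that $T$ acts as an isometry between $(x,j),(y,j)$ and their $T$-images; outside this situation, the second coordinates would differ and the identity would fail by exactly $1$.
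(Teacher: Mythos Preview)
Your proof is correct and is precisely the routine verification one expects; the paper itself omits the proof of this proposition, so there is nothing further to compare against.
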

\section{Criterion for absence of mixing}

\subsection{Continued fractions and Denjoy-Koksma inequality} \label{BasicNotationSection}

For any irrational number $\alpha\in [0,1]$  we denote by $[a_0^\alpha;a_1^\alpha,...]$ the continued fraction expansion of $\alpha$ and let  $(p_n^{\alpha})_{n\in\N}$ and $(q_n^\alpha)_{n\in\N}$ be the sequences defined recursively by
\begin{equation}\label{eq:NumeratorRecursiveDefn}
p_{n+1}^{\alpha}=a_{n+1}^{\alpha}p_n^\alpha+p_{n-1}^\alpha
\end{equation}
and 
\begin{equation}\label{eq:DenominatorRecursiveDefn}
q_{n+1}^\alpha=a_{n+1}^\alpha q_n^\alpha+q_{n-1}^\alpha
\end{equation}
where $p_{-1}^\alpha=1$, $p_0^\alpha=a_0$ and $q_{-1}^\alpha=0$, $q_0^\alpha=1$. 
For any $n\in\N$, it holds that 
\begin{equation}\label{eq:BestApproxA_ell}
\frac{1}{q_n^\alpha(q_n^\alpha+q_{n+1}^\alpha)}<\left|\alpha-\frac{p_n^\alpha}{q_n^\alpha}\right|<\frac{1}{q_n^\alpha q_{n+1}^\alpha}.
\end{equation}
Furthermore, when $n$ is even, 
$$0<\alpha-\frac{p_n^\alpha}{q_n^\alpha}$$
and when $n$ is odd,
$$\alpha-\frac{p_n^\alpha}{q_n^\alpha}<0.$$
When it is clear from context, we will write $[a_0;a_1,...]$, $(q_n)_{n\in\N}$, and $(p_n)_{n\in\N}$ instead of $[a_0^{\alpha};a_1^{\alpha},...]$, $(q_n^{\alpha})_{n\in\N}$, and  $(p_n^{\alpha})_{n\in\N}$, respectively.\\
The following result will be used to prove Theorem \hyperlink{thm:main2}{A}.
\begin{proposition}\label{prop:DistributionOfIrrationalsWithSimilarContinuedFraction}
Let $n\in\N$ and let the irrational numbers $\alpha,\beta\in (0,1)$ be such that 
$$q_n^{\alpha}=q_{n}^\beta\text{ and }p_n^{\alpha}=p_n^{\beta}.$$
Set $q_n=q_n^\alpha=q_n^\beta$.
Then for any $k\in\{1,...,q_{n}-1\}$ there exists a $c_k\in\{0,...,q_{n}-1\}$ such that  
$$k\alpha,k\beta\mod 1\in \left(\frac{c_k}{q_n},\frac{c_k+1}{q_n}\right).$$
Furthermore, for each $\ell\in\{0,...,q_n-1\}$, there is a $k\in\{0,...,q_n-1\}$ with 
$$k\alpha\mod 1\in [\frac{\ell}{q_n},\frac{\ell+1}{q_n}].$$
\end{proposition}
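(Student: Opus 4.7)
The plan rests on two standard facts from continued fraction theory together with the hypothesis that $\alpha$ and $\beta$ share both $p_n$ and $q_n$: the sharp approximation $|\alpha-p_n/q_n|<1/(q_nq_{n+1})$ with sign given by $(-1)^n(\alpha-p_n/q_n)>0$, and $\gcd(p_n,q_n)=1$. Under the hypothesis, the error terms for $\alpha$ and for $\beta$ are controlled by the same bound, have the same sign (determined by the parity of $n$), and their ``integer parts'' $kp_n/q_n$ are literally equal. Informally, this is the statement that the two rotations $R_\alpha^k$ and $R_\beta^k$ track each other to within $1/q_n$ for all $k\le q_n-1$, which is the content of the first claim.

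First I would handle the first statement. Writing $k\alpha=kp_n/q_n+k(\alpha-p_n/q_n)$ and noting that for $n\geq 1$ one has $q_{n+1}\geq q_n+1$, it follows that for $k\in\{1,\ldots,q_n-1\}$ the error satisfies
$$|k(\alpha-p_n/q_n)|<\frac{q_n-1}{q_nq_{n+1}}<\frac{1}{q_{n+1}}<\frac{1}{q_n}.$$
Set $j_k:=kp_n\bmod q_n\in\{0,\ldots,q_n-1\}$, so that $kp_n/q_n\bmod 1=j_k/q_n$; by coprimality of $p_n$ and $q_n$, $k\mapsto j_k$ is a bijection of $\{0,\ldots,q_n-1\}$, and in particular $j_k\ge 1$ for $k\geq 1$. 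Then $k\alpha\bmod 1$ lies strictly within distance $1/q_n$ of $j_k/q_n$, on the side dictated by the parity of $n$. Accordingly I would define $c_k:=j_k$ when $n$ is even (landing $k\alpha\bmod 1$ in the open interval $(j_k/q_n,(j_k+1)/q_n)$) and $c_k:=j_k-1$ when $n$ is odd (landing $k\alpha\bmod 1$ in $((j_k-1)/q_n,j_k/q_n)$, which is a valid interval since $j_k\ge 1$). The identical construction applied to $\beta$ produces the same $j_k$ and the same parity-determined sign, so $k\alpha\bmod 1$ and $k\beta\bmod 1$ lie in the same open interval $(c_k/q_n,(c_k+1)/q_n)$.

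The second statement is then a direct consequence of the bijectivity of $k\mapsto j_k$. Indeed $c_k$ is a bijective (parity-dependent) shift of $j_k$ on $\{1,\ldots,q_n-1\}$, and the trivial case $k=0$ with $0\cdot\alpha=0\in[0,1/q_n]$ accounts for $\ell=0$; stitching these together shows that every $\ell\in\{0,\ldots,q_n-1\}$ is attained. A minor point of care in the odd-$n$ case is the ``wrap-around'' near $\ell=q_n-1$, which is handled either by the torus identification $0\equiv 1$ in the closed intervals $[\ell/q_n,(\ell+1)/q_n]$ (giving $0\in[(q_n-1)/q_n,1]$), or equivalently by noting that $q_n\alpha\bmod 1\in((q_n-1)/q_n,1)$ serves as a legitimate representative of this interval.

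There is no deep obstacle here: the whole statement is essentially a quantitative form of the three-distance theorem for $\{k\alpha\}$ at the natural scale $1/q_n$, coupled with the observation that two irrationals sharing a convergent are forced to track each other along the first $q_n$ iterates. The only delicate bookkeeping is tracking signs and the edge case $j_k=0$ when $n$ is odd, both of which are straightforward once the parity-dependent definition of $c_k$ is in place.
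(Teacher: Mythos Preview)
Your proof is correct and follows essentially the same approach as the paper: both arguments exploit the sign-determined inequality $0<(-1)^n k(\alpha-p_n/q_n)<1/q_n$ to place $k\alpha\bmod 1$ and $k\beta\bmod 1$ in the same open $1/q_n$-interval adjacent to $j_k/q_n$ with $j_k=kp_n\bmod q_n$, and both define $c_k$ by the same parity-dependent shift of $j_k$. Your treatment of the second claim is more explicit than the paper's (which simply invokes $\gcd(p_n,q_n)=1$), but the substance is identical.
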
 
\begin{proof}
Set $p_n=p_n^\alpha=p_n^\beta$. When $n$ is even $\alpha,\beta>\frac{p_n}{q_n}$ and when $n$ is odd, $\alpha,\beta<\frac{p_n}{q_n}$. Thus, by \eqref{eq:BestApproxA_ell},  we either have 
$$0<k\alpha-\frac{kp_n}{q_n},k\beta-\frac{kp_n}{q_n}<\frac{1}{q_n}$$
for each $k\in\{1,...,q_n-1\}$ or 
$$0<\frac{kp_n}{q_n}-k\alpha,\frac{kp_n}{q_n}-k\beta<\frac{1}{q_n}.$$
for each $k\in\{1,...,q_n-1\}$. Setting $c_k\equiv kp_n\mod q_n$ in the former case and $c_k\equiv (kp_n-1)\mod q_n$ in the latter, we see that the first claim holds.\\
To see that the second claim holds, recall that $p_n$ and $q_n$ have no non-trivial common divisors.
\end{proof}

We now record for future use Denjoy-Koksma inequality. For any set $X$, any $x\in X$, any function $F:X\rightarrow \mathbb C$, any map $Q:X\rightarrow X$, and any $n\in\N$, we define the $n$-th ergodic sum of $f$ along the orbit of $Q$ at $x$ by
$$S_n(Q,F)(x)=\sum_{j=0}^{n -1}F(Q^jx).$$
So, in particular, when for some $\alpha\in (0,1)$, $X=\T$ and $Q=R_{\alpha}$,
$$S_n(R_\alpha,F)(x)=\sum_{j=0}^{n-1}F(x+j\alpha\mod 1).$$
\begin{namedtheorem}{Denjoy-Koksma inequality}
    Let $\alpha$ be an irrational number with denominator sequence $(q_n)_{n\in\N}$ and let $F:\mathbb T\rightarrow \R$ be a function of bounded variation. For any $n\in\N$ and any $x\in\T$,
    $$\left|S_{q_n}(R_\alpha,F)(x)-q_n\int_\T F\text{d}\lambda'\right|\leq\text{Var}(F),$$
    where $\lambda'$ denotes the normalized Lebesgue measure on $\T$ and $\text{Var}(F)$ the total variation of $F$.
\end{namedtheorem}

\subsection{Criterion for absence of mixing for special flows}
The following condition on absence of mixing of special flows built over general transformations $(G,X,\kappa)$ was introduced by Kochergin, \cite{Koch}. 
\begin{proposition}[Absence of Mixing criterion, Theorem 1 in \cite{Koch}]\label{prop:abs} Let $(G^\phi)$ be the special flow build over a map $(G,X,\kappa)$ and under a function $\phi:X\to \R_+$. Assume there exists a constant $M>0$, an increasing sequence of integers $(r_k)_{k\in\N}$ and a sequence of sets $E_k\subset X$, $k\in\N$,  satisfying the following:
\begin{enumerate}
\item[A1.]  $\inf_{k\in\N} \lambda (E_k)>0$ and 
$$\lim_{k\rightarrow\infty}\left(\sup_{z\in E_k}d(z,G^{r_k}z) \right)=0.$$
\item[A2.] for any $y_1,y_2\in E_k$, $|S_{r_k}(G,\phi)(y_1)-S_{r_k}(G,\phi)(y_2)|<M$;
\end{enumerate}
then $(G^\phi)$ is {\em not} mixing.
\end{proposition}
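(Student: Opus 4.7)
I would argue by contradiction, assuming the special flow $(G^\phi)$ is mixing and extracting from A1 and A2 a geometric concentration mechanism that obstructs mixing. Denote by $\Phi_t$ the time-$t$ map of $(G^\phi)$. For each $k$, pick $y_k \in E_k$ and set $t_k := S_{r_k}(G, \phi)(y_k)$. Condition A2 gives $|t_k - S_{r_k}(G, \phi)(y)| < M$ for every $y \in E_k$, which means geometrically that, up to at most one crossing of the roof, the image $\Phi_{t_k}(y, 0)$ lies at height within $M$ of either $0$ or $\phi(\cdot)$ in the fiber above $G^{r_k}y$ or $G^{r_k-1}y$. Combined with A1, which forces $G^{r_k}y$ to be close to $y$, this puts the image $\Phi_{t_k}(\cT_k)$ of the cylinder $\cT_k := E_k \times [0, \varepsilon]$ (for a small fixed $\varepsilon>0$) inside the skirt
$$\cW_M := \bigl\{(z, r) \in X^\phi : r \in [0, M+\varepsilon] \cup [\phi(z) - M - \varepsilon, \phi(z))\bigr\}.$$

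Next I choose the fixed target set $\cB := X^\phi \setminus \cW_M$. Since the logarithmic singularities of $\phi$ ensure that $\{z : \phi(z) > 2(M+\varepsilon) + 1\}$ has positive $\lambda$-measure, the flow-invariant measure $\mu$ on $X^\phi$ satisfies $\mu(\cB) > 0$; by construction $\cT_k \cap \Phi_{-t_k}\cB = \emptyset$ for every $k$. To upgrade this into a contradiction against mixing of a \emph{fixed} pair of sets, I pass to a subsequence along which $\chi_{E_k}$ converges weak-$\ast$ in $L^\infty(X, \lambda)$ to some $f$ with $\int f\,d\lambda \ge c > 0$, where $c := \inf_k \lambda(E_k)$. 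For $\alpha \in (0, c)$ the level set $E^* := \{f > \alpha\}$ has positive $\lambda$-measure, and I set $\cA := E^* \times [0, \varepsilon]$. Splitting $\cA = (\cA \cap \cT_k) \cup (\cA \setminus \cT_k)$, the first piece contributes $0$ to $\mu(\cA \cap \Phi_{-t_k}\cB)$ while the second is dominated by the $\mu$-measure of $(E^* \setminus E_k) \times [0, \varepsilon]$, which by weak-$\ast$ convergence tends to a multiple of $\lambda(E^*) - \int_{E^*} f\, d\lambda$. Mixing forces this same quantity to dominate $\mu(\cA)\mu(\cB)$; choosing $\alpha$ close enough to $c$, so that $\int_{E^*} f \ge \alpha\lambda(E^*)$ is nearly $\lambda(E^*)$, violates the strict positivity of $\mu(\cB)$.

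The main obstacle I anticipate is the calibration in this last step: the argument as presented goes through when one can pick $\alpha > 1 - \mu(\cB)$ while still having $\alpha < c$, i.e., roughly $c + \mu(\cB) > 1$. When this fails — for instance if $M$ is large compared to the singularity strength of $\phi$, so that the skirt $\cW_M$ fills most of $X^\phi$ — one must switch test set: take $\cB$ to be a thin \emph{bottom} strip $\{r \le \varepsilon'\}$ and exploit the converse concentration, namely that $\Phi_{t_k}(\cT_k)$ is squeezed into a set of bounded vertical extent $O(M)$, so that it overpopulates $\cB$ compared to the mixing prediction $\mu(\cT_k)\mu(\cB)$. Either variant expresses the same underlying mechanism: A1 and A2 jointly provide a form of \emph{partial rigidity} of $(G^\phi)$ along $(t_k)$, which is incompatible with mixing.
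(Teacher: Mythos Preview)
The paper does not give its own proof of this proposition: it is quoted as Theorem~1 of Kochergin~\cite{Koch}, and the sentence immediately following the statement explains that the present formulation of~A1 implies Kochergin's original hypothesis via Remark~2 of that reference. There is therefore no in-paper argument to compare against; you were attempting to reconstruct a cited result.

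Your outline has the right shape but contains two genuine gaps. First, you invoke ``the logarithmic singularities of $\phi$'' to guarantee $\mu(\cB)>0$, yet the proposition is stated for an arbitrary roof $\phi:X\to\R_+$; nothing in the hypotheses prevents $\phi\le 2(M+\varepsilon)$ everywhere, in which case your complement-of-skirt set $\cB$ is empty and the primary argument collapses. Second, your fallback for the regime $c+\mu(\cB)\le 1$ does not work as written: the bare inclusion $\Phi_{t_k}(\cT_k)\subseteq\cW_M$ yields no lower bound on the intersection of $\Phi_{t_k}(\cT_k)$ with any particular thin bottom strip, so there is no ``overpopulation'' to exploit; and testing against $\cW_M$ instead of $\cB$ just reproduces the same constraint $\alpha<1-\mu(\cB)$ you already had.

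The underlying problem is that you use A2 heavily but barely use A1. Condition~A1 tells you that $G^{r_k}E_k$ lies in a $\delta_k$-neighbourhood of $E_k$ with $\delta_k\to 0$, so $\Phi_{t_k}(\cT_k)$ is confined not merely to the skirt $\cW_M$ but to the much smaller flow-tube $\Phi_{(-M,\,M+\varepsilon)}\bigl(G^{r_k}E_k\times\{0\}\bigr)$, which has $\mu$-measure at most $(2M+\varepsilon)\lambda(E_k)/\int_X\phi\,d\lambda$ irrespective of whether $\phi$ is bounded. It is this joint localisation---in the base via A1 and in the fibre via A2---that produces the partial-rigidity obstruction to mixing. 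Your skirt $\cW_M$ discards the base localisation, which is exactly why the argument fails once $\phi$ is bounded.
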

In fact the formulation of condition A1. is slightly different in \cite{Koch} but then in Remark 2 in \cite{Koch} the author explains that the condition A1. implies the conditions in Theorem 1. We will use the above criterion for special flows over the skew product map $T:\T\times\Z_2\rightarrow\T\times\Z_2$. In fact we will state another criterion which is based on Proposition \ref{prop:abs} and which  is adapted to the flow $T^f$ defined in Section \ref{sec2}. Recall that  $T=T_{\alpha_0,J}$ and $f$ were defined in \eqref{eq:skew} and \eqref{eq:f}, respectively, that $z_1,z_2,z_3,z_4$ denote the discontinuities of $T$, and that $f$ possess an additional discontinuity at $z_0=(0,1)\in\T\times\Z_2$. 
\begin{proposition}\label{prop:crit} Let $(q_n)_{n\in\N}$ be the sequence of denominators of $\alpha_0$. Assume there exist $C,c>0$,  a sequence $(t_k)_{k\in\N}$ in $\N$, and  a sequence of points $(y_k,j_k)_{k\in\N}$ in $\T\times \Z_2$ such that  for every $k\in\N$:
\begin{enumerate}
\item[B1.] $$\min_{0\leq j<q_{t_k},1\leq i\leq 4} d\Big(T^j(y_k,j_k),z_i\Big)\geq \frac{2c}{q_{t_k}};$$
\item[B2.] $d(T^{q_{t_k}}(y_k,j_k),(y_k,j_k))=o((q_{t_k}\log q_{t_k})^{-1})$;
\item[B3.] $|S_{q_{t_k}}(T,f')(y_k,j_k)|<C\cdot q_{t_k}$;
\item[B4.] $|S_{q_{t_k}}(T,f'')(x,j_k)|<C\cdot q_{t_k}^2$ for any $x\in\T$ with $\|x-y_k\|\leq \frac{c}{q_{t_k}}$;
\item[B5.] $|S_{j}(T,f')(x,j_k)|<C\cdot q_{t_k}\log q_{t_k}$ for any $x\in\T$ with $\|x-y_k\|\leq \frac{c}{q_{t_k}}$ and any $j<q_{t_k}$.
\end{enumerate}
then the special flow $(T^f)$ is not mixing.
\end{proposition}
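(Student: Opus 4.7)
The plan is to verify the two hypotheses A1 and A2 of Kochergin's criterion (Proposition~\ref{prop:abs}) with $r_k:=q_{t_k}$ and with $E_k$ chosen to be a Rokhlin tower built above a small ball around $(y_k,j_k)$. Set
\begin{equation*}
B_k:=\{(x,j_k)\in\T\times\Z_2\,:\,\|x-y_k\|\leq c/q_{t_k}\},\qquad E_k:=\bigsqcup_{j=0}^{q_{t_k}-1}T^jB_k.
\end{equation*}
Because $T$ is a $\Z_2$-extension of the rotation $R_{\alpha_0}$, it acts as an isometry (in the metric $d$) on each component of its domain of continuity; hypothesis B1 then implies that for every $0\leq j\leq q_{t_k}$ the map $T^j$ restricts to an isometry on $B_k$ and sends it onto an interval of length $2c/q_{t_k}$ inside a single fiber of $\T\times\Z_2$.

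For A1, I would fix $c<1/4$; the best-approximation bound~\eqref{eq:BestApproxA_ell} then gives $\|m\alpha_0\|\geq 1/(2q_{t_k})>2c/q_{t_k}$ for all $1\leq m<q_{t_k}$, so the centers $T^j(y_k,j_k)$ are $(2c/q_{t_k})$-separated in $\T\times\Z_2$ and hence the iterates $T^jB_k$ are pairwise disjoint, yielding the uniform lower bound $\lambda(E_k)=q_{t_k}\lambda(B_k)=c$. For the displacement part, any $z\in E_k$ can be written $z=T^jz'$ with $z'\in B_k$; since $T^j$ is an isometry on a neighborhood containing both $z'$ and $T^{q_{t_k}}z'$ (using B2 to ensure the latter stays inside the continuity region guaranteed by B1),
\begin{equation*}
d(z,T^{q_{t_k}}z)=d(z',T^{q_{t_k}}z')=o(1/(q_{t_k}\log q_{t_k})),
\end{equation*}
uniformly over $E_k$, which tends to $0$ as $k\to\infty$.

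For A2, write $y_i=T^{j_i}z_i$ with $z_i\in B_k$; by the triangle inequality it suffices to bound the within-base oscillation $|S_{q_{t_k}}(T,f)(z_1)-S_{q_{t_k}}(T,f)(z_2)|$ and, for $i=1,2$, the vertical displacement $|S_{q_{t_k}}(T,f)(T^{j_i}z_i)-S_{q_{t_k}}(T,f)(z_i)|$. The first is handled by the mean value theorem: B3 controls $|S_{q_{t_k}}(T,f')(y_k,j_k)|\leq Cq_{t_k}$, and B4 extends this uniformly over $B_k$ via $|S_{q_{t_k}}(T,f')(x,j_k)-S_{q_{t_k}}(T,f')(y_k,j_k)|\leq (c/q_{t_k})\cdot Cq_{t_k}^2=Cc\,q_{t_k}$, so $|S_{q_{t_k}}(T,f')|=O(q_{t_k})$ on $B_k$; multiplied by $\|z_1-z_2\|\leq 2c/q_{t_k}$ this gives an $O(1)$ bound. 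For the vertical displacement, the cocycle identity
\begin{equation*}
S_{q_{t_k}}(T,f)(T^jz)-S_{q_{t_k}}(T,f)(z)=S_j(T,f)(T^{q_{t_k}}z)-S_j(T,f)(z)
\end{equation*}
combined with MVT reduces the bound to $d(T^{q_{t_k}}z,z)\cdot\sup_{\xi\in B_k}|S_j(T,f')(\xi,j_k)|$, which by B2 and B5 is $o(1/(q_{t_k}\log q_{t_k}))\cdot Cq_{t_k}\log q_{t_k}=o(1)$, uniformly in $j<q_{t_k}$. Adding these contributions produces a uniform constant $M$ with $|S_{q_{t_k}}(T,f)(y_1)-S_{q_{t_k}}(T,f)(y_2)|<M$ for all $y_1,y_2\in E_k$ and all $k$ sufficiently large.

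The delicate point is the vertical-displacement estimate: the partial sums $S_j(T,f')$ inevitably grow like $q_{t_k}\log q_{t_k}$ because of the logarithmic singularities of $f$, so the return displacement has to decay at a strictly faster rate in order to gain an $o(1)$ margin. This is precisely the matching enforced by the $1/(q_{t_k}\log q_{t_k})$ scale in B2 and motivates the particular form of that hypothesis. Once this balance is in place, the remainder of the argument is a uniform Taylor expansion built from the derivative bounds B3 and B4, together with the Rokhlin-tower disjointness obtained from the Diophantine scale $\|q_{t_k-1}\alpha_0\|\asymp 1/q_{t_k}$.
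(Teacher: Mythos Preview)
Your argument is essentially the paper's: build $E_k$ as a Rokhlin tower over a short interval around $(y_k,j_k)$, use the best-approximation inequality for disjointness (A1), and combine the cocycle identity with a Taylor/MVT estimate driven by B3--B5 (A2). The decomposition into ``within-base oscillation'' plus ``vertical displacement'' matches the paper's splitting \eqref{eq:CocycleDecomp} exactly, and your use of B4 to propagate the B3 bound across $B_k$ is equivalent to the paper's second-order Taylor expansion.

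There is one small slip worth fixing. In the vertical-displacement step you apply the mean value theorem to $S_j(T,f)(T^{q_{t_k}}z)-S_j(T,f)(z)$ and then invoke B5 on the intermediate point, writing $\sup_{\xi\in B_k}|S_j(T,f')(\xi,j_k)|$. But if $z$ sits at the edge of your $B_k$ (radius $c/q_{t_k}$), then $T^{q_{t_k}}z$ lies at distance $c/q_{t_k}+\|q_{t_k}\alpha_0\|$ from $(y_k,j_k)$, and the intermediate point may fall just outside the region $\|x-y_k\|\le c/q_{t_k}$ where B5 is assumed. The paper avoids this by taking the base interval of half the radius, $c/(2q_{t_k})$, so that for large $k$ the set $H_k=(I_k\times\{j_k\})\cup T^{q_{t_k}}(I_k\times\{j_k\})$ is contained in the ball of radius $c/q_{t_k}$ and B4, B5 apply throughout. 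With that adjustment (which costs only a factor of two in $\lambda(E_k)$), your proof goes through verbatim.
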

\begin{proof}[Proof of Proposition \ref{prop:crit}] For each $k\in\N$, let $I_k=[-\frac{c}{2q_{t_k}}+y_k,y_k+\frac{c}{2q_{t_k}}]$ and let $E_k:=\bigcup_{j=0}^{q_{t_k}-1}T^j(I_k\times \{j_k\})$. We claim that $\inf_k \lambda(E_k)>0$.\\
Indeed, for any distinct $i,j\in\{0,...,q_{t_k}-1\}$, 
$$\|(j-i)\alpha_0\|\geq \|q_{t_k-1}\alpha_0\|>\frac{1}{2q_{t_k}}$$
and, hence, 
\begin{equation}\label{eq:MeasureOfEk}
\lambda(E_k)\geq \frac{1}{2}\lambda'(\bigcup_{j=0}^{q_{t_k}-1}(I_k+j\alpha))\geq\frac{1}{2}q_{t_k}\frac{\min\{1,c\}}{2q_{t_k}}\geq \frac{\min\{1,c\}}{4},
\end{equation}
proving the claim.\\

Notice now that  for each $j\in\{0,...,q_{t_k}-1\}$, $T^j(I_k\times\{j_k\})$ is connected and no discontinuity of $T$ lies in $T^j(I_k\times\{j_k\})$, i.e. $z_i\notin T^j(I_k\times\{j_k\})$ for $i=1,\ldots,4$. To see this, first note  that $I_k\times\{j_k\}$ is connected and that, by $B_1$, it contains no discontinuity of $T$ (Otherwise we would have that for some $i\in\{1,...,4\}$, $d((y_k,j_k),z_i)<\frac{c}{2q_{t_k}}$). Thus, $T(I_k\times\{j_k\})$ is connected and, by $B_1$ and Proposition \ref{prop:BasicPropertiesOfd} item (iii), $T(I_k\times\{j_k\})$ contains no discontinuity of $T$. Continuing in this way, we see that each of $T^2(I_k\times\{j_k\})$,....,$T^{q_{t_k}-1}(I_k\times\{j_k\})$ are connected and contain no discontinuity of $T$.\\
Set 
$$
H_k=(I_k\times\{j_k\})\cup T^{q_{t_k}}(I_k\times\{j_k\}).
$$
We now show that for $k\in\N$ large enough we have that for any  $j\in\{0,...,q_{t_k}-1\}$, $T^{j}H_k$ is connected and contains no discontinuity of $T$. To do this, first note that since $T^{q_{t_k}-1}(I_k\times\{j_k\})$ is connected and contains no discontinuity of $T$, 
$
T^{q_{t_k}}(I_k\times\{j_k\})
$
is also connected. By condition $B_2$,
$$\lim_{k\rightarrow\infty}q_{t_k}d(T^{q_{t_k}}(y_k,j_k),(y_k,j_k))=0.$$
Thus, taking $k\in\N$ large enough, we have that 
$$d(T^{q_{t_k}}(y_k,j_k),(y_k,j_k))<\frac{c}{2q_{t_k}}<1.$$
So, by Proposition \ref{prop:BasicPropertiesOfd} items (i) and (ii),
$(I_k\times\{j_k\})\cap (T^{q_{t_k}}(I_k\times\{j_k\})\neq \emptyset$ and, hence,
$H_k$
is also connected. It now follows from $B_1$, Proposition \ref{prop:BasicPropertiesOfd} item (ii), and  our choice of $k$ that $H_k$  contains no discontinuity of $T$. Note now that $TH_k$ is connected and that, by $B_1$, Proposition \ref{prop:BasicPropertiesOfd} item (ii), and our choice of $k$, it contains no discontinuity of $T$. Continuing in this way for $T^jH_k$, $j=2,...,q_{t_k}-1$, we see that the claim holds. \\
It now follows that  for $k\in\N$ large enough, any $j\in\{0,...,q_{t_k}-1\}$, and any $(x,j_x)\in T^j(I_k\times\{j_k\})$, 
\begin{equation}\label{eq:DistanceOnRecurrentTimes}
d(T^{q_{t_k}}(x,j_x),(x,j_x))=\|q_{t_k}\alpha_0\|=d(T^{q_{t_k}}(y_k,j_k),(y_k,j_k))=o\Big((q_{t_k}\log(q_{t_k}))^{-1}\Big),
\end{equation}
which, together with \eqref{eq:MeasureOfEk}, imply  that condition $A.1$ in  in Proposition \ref{prop:abs} holds with $(r_k)_{k\in\N}=(q_{t_k})_{k\in\N}$.\\

 Let $\vec{y}\in T^{j}(I_k\times\{j_k\})$ for some $j\in\{0,...,q_{t_k}-1\}$. We use the cocycle identity to write 
\begin{multline}\label{eq:CocycleDecomp}
 S_{q_{t_k}}(T,f)(\vec{y})-S_{q_{t_k}}(T,f)(y_k,j_k)=\Big(S_{q_{t_k}}(T,f)(T^{-j}\vec{y})-S_{q_{t_k}}(T,f)(y_k,j_k)\Big)-\\
 \Big(S_j(T,f)(T^{-j}\vec{y})-S_j(T,f)(T^{q_{t_k}-j}\vec{y})\Big)
 \end{multline}
The bound on both of the summands in the right-hand side of \eqref{eq:CocycleDecomp} is similar. Observe that for $k\in\N$ large enough and for each $j\in\{0,...,q_{t_k}-1\}$, $T^jH_k$ is connected and contains no discontinuity of $T$ and that, when $j<q_{t_k}-1$, the restriction of $T$ to 
$T^jH_k$
is an isometry. Viewing $S_{q_{t_k}}(T,f)$ as a function on $H_k$ and  
using Taylor expansion, we can write
$$
|S_{q_{t_k}}(T,f)(T^{-j}(\vec{y}))-S_{q_{t_k}}(T,f)(y_k,j_k)|\leq |S_{q_{t_k}}(T,f')(y_k,j_k)|\cdot |I_k|+ |S_{q_{t_k}}(T,f'')(\theta)||I_k|^2,
 $$
 for some $\theta\in H_k$. By B3, $|S_{q_{t_k}}(T,f')(y_k,j_k)|\cdot |I_k|<c\cdot C$ and, by B4,  $|S_{q_{t_k}}(T,f'')(\theta)|=O(q_{t_k}^2)$. This bounds the first summand in the right-hand side of \eqref{eq:CocycleDecomp}. For the second summand,  we  write (using the mean value theorem)
 \begin{multline*}
 |S_j(T,f)(T^{-j}\vec{y})-S_j(T,f)(T^{q_{t_k}-j}\vec{y})|\\
 =|S_{j}(T,f')(\theta)|d(T^{-j}\vec{y},T^{q_{t_k}-j}\vec{y})=O(q_{t_k}\log q_{t_k})\cdot o((q_{t_k}\log q_{t_k})^{-1})=o(1), 
 \end{multline*}
 by condition $B5$ and \eqref{eq:DistanceOnRecurrentTimes}. This finishes the proof. 
\end{proof}

 We will use Proposition \ref{prop:crit} to prove Theorem \hyperlink{thm:main2}{A}. We will construct an $\alpha_0$ with sequence of denominators $(q_n)_{n\in\N}$ and an interval $J\subseteq \mathbb T$ such that for some increasing sequence $(t_k)_{k\in\N}$ in $\N$, the sequence $q_{t_k}$, $k\in\N$, satisfies conditions B1-B5 in Proposition \ref{prop:crit}.\\
 Conditions B1, B4, and B5 will be easily satisfied just by general properties of distribution of the orbit of $\alpha_0$. Condition B2. will be  guaranteed by certain Diophantine assumption on $\alpha_0$: $q_{t_k}\log q_{t_k}=o(q_{t_k+1})$. Condition B3. is by far the most difficult and requires most of the work.
  
\section{Ergodic sums over rotations for functions with asymmetric logarithmic singularities} 
 We now state some general lemmas on distribution of orbits of an irrational rotation by $\alpha_0$ on $\T$. We will denote by $(q_n)_{n\in\N}$ the sequence of denominators of $\alpha_0$ and for any $x\in\T$, we will let $Rx=x+\alpha_0\mod 1$. The main tool for establishing this results is  Denjoy-Koksma inequality (See Section \ref{BasicNotationSection}).  Recall that $g$ is defined by \eqref{eq:g} and $x_0$, $x_1$ by \eqref{eq:discontinuitiesOfT}.  We define the function $\gamma:\mathbb T\rightarrow \R$ by $\gamma(x)=g(x,0)$. More explicitly, for any $x\in\mathbb T$,
$$
\gamma(x)=
 1+ 2\Big|\log(\|x-x_0\|)\Big|+\Big|\log^+(x_0-x)\Big| +
 \Big|\log(\|x-x_1\|)\Big|
$$
 \begin{lemma}\label{lem:DK} For any $n\in\N$ and any $x\in\T$ with 
 $$\min_{0\leq s<q_n,0\leq i\leq 1}\|x+s\alpha_0-x_i\|>0,$$
 we have 
 \begin{equation}\label{eq:ClosestReturnBoundForG'PartI}
 \Big|S_{q_n}(R,\gamma')(x)-q_n\log q_n \Big|=O\Big([\min_{0\leq s<q_n,0\leq i\leq 1}\|x+s\alpha_0-x_i\|]^{-1}\Big).
 \end{equation}
Moreover, for any $j<q_n$,
\begin{equation}\label{eq:ClosestReturnBoundForG'PartII}
 |S_{j}(R,\gamma')(x)|\leq 7q_n\log q_n+O\Big([\min_{0\leq s<q_n,0\leq i\leq 1}\|x+s\alpha_0-x_i\|]^{-1}\Big).
 \end{equation}
 \end{lemma}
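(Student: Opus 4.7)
The plan is to isolate the contributions of the two logarithmic singularities of $\gamma$ at $x_0,x_1$ to the ergodic sum and apply the three-distance theorem together with the Denjoy--Koksma inequality to control sums of $1/(y-x_i)$-type terms. Direct inspection of \eqref{eq:g} shows that near $x_0$ the derivative equals $\gamma'(y)=-2/(y-x_0)+O(1)$ for $y>x_0$ and $\gamma'(y)=+3/(x_0-y)+O(1)$ for $y<x_0$, carrying asymmetry coefficient $3-2=1$; while near $x_1$ the two sides have coefficients $-1$ and $+1$, giving asymmetry coefficient $0$. This motivates the decomposition $\gamma'=\phi_0+\phi_1+\psi$, where each $\phi_i$ is the explicit rational singular part supported on a fixed half-neighborhood of $x_i$, and $\psi$ is the bounded, finite-variation remainder. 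Applying Denjoy--Koksma to $\psi$ gives $S_{q_n}(R,\psi)(x)=q_n\int_\T\psi\,d\lambda'+O(1)=O(q_n)$. Since the orbit $y_s=x+s\alpha_0$ is $1/q_n$-dense one has $\min_{s,i}\|y_s-x_i\|\le 1/q_n$, so $q_n=O([\min]^{-1})$ and the $\psi$-contribution is absorbed in the error.

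For $\phi_i$, I would truncate at scale $\delta=1/(4q_n)$, setting $\phi_i^{(\delta)}:=\phi_i\cdot\chi_{\{\|y-x_i\|>\delta\}}$. This is a BV function whose total variation is $O(1/\delta)=O(q_n)$ and whose integral equals $(c_{-}^{i}-c_{+}^{i})\log(1/(2\delta))+O(1)=(c_{-}^{i}-c_{+}^{i})\log q_n+O(1)$, with $(c_{+}^{0},c_{-}^{0})=(2,3)$ and $(c_{+}^{1},c_{-}^{1})=(1,1)$. Denjoy--Koksma then yields $S_{q_n}(R,\phi_i^{(\delta)})(x)=(c_{-}^{i}-c_{+}^{i})q_n\log q_n+O(q_n)$. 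The three-distance theorem guarantees that consecutive orbit points are separated by at least $\|q_{n-1}\alpha_0\|\ge 1/(2q_n)=2\delta$, so at most one orbit point (necessarily the closest one, on one side only) lies in $[x_i-\delta,x_i+\delta]$, contributing $O(1/d_i^*)$ where $d_i^*:=\min_s\|y_s-x_i\|$. Consequently $S_{q_n}(R,\phi_i)(x)=(c_{-}^{i}-c_{+}^{i})q_n\log q_n+O(q_n+1/d_i^*)$. Summing over $i$, the symmetric term at $x_1$ vanishes and the $x_0$-term gives exactly $q_n\log q_n$; combined with the $\psi$-bound this proves \eqref{eq:ClosestReturnBoundForG'PartI}.

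For \eqref{eq:ClosestReturnBoundForG'PartII}, I would use the trivial bound $|S_j(R,\gamma')(x)|\le\sum_{s=0}^{q_n-1}|\gamma'(y_s)|$ (valid for any $j<q_n$) and repeat the preceding argument with absolute values applied separately on each of the four sides of the two singularities. Taking absolute values destroys the cancellation responsible for the factor $c_{-}^{i}-c_{+}^{i}$ in the signed case, so each of the four sides contributes $c^{i}_{\pm}\cdot q_n\log q_n+O(q_n+1/d_i^*)$, and the sum of the four coefficients is $c_{+}^{0}+c_{-}^{0}+c_{+}^{1}+c_{-}^{1}=2+3+1+1=7$, which is precisely the constant appearing in the statement. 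The main subtlety throughout is the precise control of the single closest return on each side, which is where the $O([\min]^{-1})$ term enters; the choice $\delta=1/(4q_n)$ is dictated by the need to simultaneously isolate that one point from the rest of the orbit and to keep the Denjoy--Koksma error on the truncated function $\phi_i^{(\delta)}$ below $O(q_n)\le O([\min]^{-1})$.
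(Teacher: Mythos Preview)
Your proposal is correct and follows essentially the same strategy as the paper. The paper decomposes $\gamma'$ into five monotone pieces of the form $A_t\chi_{B_t}(x)/|x-x_i|$ (splitting the left side of $x_0$ into the coefficient-$2$ symmetric part and the coefficient-$1$ part coming from $|\log^+(x_0-x)|$, which is supported on all of $[0,x_0)$ rather than a half-neighborhood), and then quotes \cite[Lemma~4.3]{FaKa} for the one-sided estimates that you rederive via the truncation-plus-Denjoy--Koksma argument; your four-side decomposition simply merges two of those pieces, pushing the bounded tail of the $\log^+$ term into $\psi$. For \eqref{eq:ClosestReturnBoundForG'PartII} the paper uses a slightly slicker device: since each $\gamma'_t$ has constant sign, $|S_j(R,\gamma'_t)|\le |S_{q_n}(R,\gamma'_t)|$ directly, and summing the five bounds with $\sum_t|A_t|=2+2+1+1+1=7$ gives the result without having to rerun the full analysis for $|\gamma'|$; your route via $|S_j(\gamma')|\le S_{q_n}(|\gamma'|)$ and the side-by-side estimate with $2+3+1+1=7$ is a touch more work but equally valid.
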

 \begin{proof}
 Adapting the proof of Lemma 4.3. in \cite{FaKa}, one obtains
 \begin{equation}\label{eq:L4.3}
 4q_n\geq S_{q_n}(R,-\frac{\chi_{[x_i,x_i+1/2)}(x)}{|x-x_i|})+q_n\log(q_n)\geq -(\min_{0\leq s<q_n}\|x+s\alpha_0-x_i\|)^{-1}-4q_n,
 \end{equation}
 \begin{equation}\label{eq:L4.3'}
 -4q_n\leq S_{q_n}(R,\frac{\chi_{[x_i-1/2,x_i)}(x)}{|x-x_i|})-q_n\log(q_n)\leq (\min_{0\leq s<q_n}\|x+s\alpha_0-x_i\|)^{-1}+4q_n,
 \end{equation}
 and
 \begin{multline}\label{eq:L4.3''}
 q_n\left(\log(\frac{2}{1-\alpha_0})-4\right)\leq S_{q_n}(R,\frac{\chi_{[0,1-\alpha_0)}(x)}{(1-\alpha_0)-x})-q_n\log(q_n)\\
 \leq (\min_{0\leq s<q_n}\|x+s\alpha_0-x_i\|)^{-1}+q_n\left(\log(\frac{2}{1-\alpha_0})+4\right),
 \end{multline}
 for every  $x\in \T$, $i\in\{0,1\}$, and $n\in\N$. Observe that, by Lemma \ref{prop:DistributionOfIrrationalsWithSimilarContinuedFraction}, 
 $$\min_{0\leq s<q_n,0\leq i\leq 1}\|x+s\alpha_0-x_i\|\leq \frac{1}{q_n}$$
 and that   $\gamma'$ can be expressed as the sum of five monotone functions $\gamma'_1,...,\gamma'_5$ each of which is of the form 
 $$\gamma'_t(x)=A_t\frac{\chi_{B_t}(x)}{|x-x_i|}$$
 where $A_t$ is a constant, $B_t$ is a subset of $\T$, and $\sum_{t=1}^5A_t=1$.  Thus, by \eqref{eq:L4.3}, \eqref{eq:L4.3'}, and \eqref{eq:L4.3''} one sees that \eqref{eq:ClosestReturnBoundForG'PartI} holds.\\
 To see that \eqref{eq:ClosestReturnBoundForG'PartII} holds, it is enough to pick $j<q_n$ and note that for each $t\in\{1,...,5\}$, $|S_{j}(R,\gamma_t')(x)|<|S_{q_n}(R,\gamma_t')(x)|$. We are done. 
 \end{proof}
 The following corollary is immediate:
 \begin{corollary}\label{cor:away}
Let $n\in\N$, let $x\in \T$, and let $c>0$. Suppose that
$$\min_{0\leq s<q_n,0\leq i\leq 1}\|x+s\alpha_0-x_i\|>\frac{c}{q_n}.$$
Then, 
$$
S_{q_n}(R,\gamma')(x)=q_n\log q_n+O(q_n).
$$
 \end{corollary}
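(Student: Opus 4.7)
The plan is to derive the corollary by direct substitution of the hypothesis into the quantitative bound provided by Lemma \ref{lem:DK}. Indeed, equation \eqref{eq:ClosestReturnBoundForG'PartI} asserts that whenever $\min_{0\leq s<q_n,0\leq i\leq 1}\|x+s\alpha_0-x_i\|>0$, we have
\[
\Big|S_{q_n}(R,\gamma')(x)-q_n\log q_n\Big|=O\Big([\min_{0\leq s<q_n,0\leq i\leq 1}\|x+s\alpha_0-x_i\|]^{-1}\Big).
\]
So the first step is to observe that the hypothesis $\min_{0\leq s<q_n,0\leq i\leq 1}\|x+s\alpha_0-x_i\|>c/q_n$ is strictly stronger than the non-degeneracy condition required by Lemma \ref{lem:DK}, hence the conclusion of the lemma applies.

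Next, I would simply invert the inequality: the assumption gives
\[
\Big[\min_{0\leq s<q_n,0\leq i\leq 1}\|x+s\alpha_0-x_i\|\Big]^{-1}<\frac{q_n}{c},
\]
so the error term on the right-hand side of \eqref{eq:ClosestReturnBoundForG'PartI} is bounded by a constant (depending only on $c$) times $q_n$, i.e. it is $O(q_n)$. Substituting this bound into \eqref{eq:ClosestReturnBoundForG'PartI} yields
\[
S_{q_n}(R,\gamma')(x)=q_n\log q_n+O(q_n),
\]
which is exactly the claimed conclusion.

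There is no genuine obstacle here since all the analytic work has already been absorbed into Lemma \ref{lem:DK}; the corollary is purely a clean restatement of \eqref{eq:ClosestReturnBoundForG'PartI} under a quantitative separation hypothesis from the singularities $x_0,x_1$. The only point worth noting is that the implicit constant in the $O(q_n)$ depends on $c$, which is harmless for the intended application to condition B3 of Proposition \ref{prop:crit}, where $c$ will be fixed along the construction.
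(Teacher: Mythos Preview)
Your argument is correct and matches the paper's approach: the corollary is stated there as immediate from Lemma~\ref{lem:DK}, and you have spelled out precisely the substitution of the hypothesis into \eqref{eq:ClosestReturnBoundForG'PartI} that makes it so. Your remark that the implicit constant depends on $c$ is accurate and harmless for the application.
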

We will also need:
  \begin{lemma}\label{lem:DK0} 
For any $n\in\N$ and any $x\in\T$ with 
 $$\min_{0\leq s< q_n,0\leq i\leq 1}\|x+s\alpha_0-x_i\|>0,$$
 we have 
 $$
 |S_{q_n}(R,\gamma'')(x)|=O\Big([\min_{0\leq s<q_n,0\leq i\leq 1}\|x+s\alpha_0-x_i\|]^{-2}\Big)
 $$
 \end{lemma}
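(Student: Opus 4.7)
The plan is to follow the strategy used in the proof of Lemma \ref{lem:DK}. In that proof $\gamma'$ is decomposed as a finite sum of monotone pieces of the form $A_t\chi_{B_t}(x)/|x-x_{i_t}|$ with $i_t\in\{0,1\}$. Differentiating term-by-term under the right-continuous convention for derivatives introduced earlier (so that the endpoints of each $B_t$ contribute nothing), one expresses $\gamma''$ as a finite sum of terms of the form
\begin{equation*}
a_t\,\frac{\chi_{B_t}(x)}{(x-x_{i_t})^2},\qquad i_t\in\{0,1\}.
\end{equation*}
The estimate thus reduces to controlling, for each $i\in\{0,1\}$, the sum $\sum_{s=0}^{q_n-1}\|x+s\alpha_0-x_i\|^{-2}$.

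Order the values $\|x+s\alpha_0-x_i\|$, $0\le s<q_n$, in increasing order as $d_1\le d_2\le\cdots\le d_{q_n}$, and write $d:=\min_{0\le s<q_n,\,0\le i'\le 1}\|x+s\alpha_0-x_{i'}\|$, so that $d_1\ge d$. The minimum gap of the orbit $\{x+s\alpha_0\}_{0\le s<q_n}$ in $\T$ is at least $\|q_{n-1}\alpha_0\|>1/(2q_n)$, hence on each side of $x_i$ the orbit points are pairwise $1/(2q_n)$-separated. A short counting argument (at least $\lceil (k-1)/2\rceil$ of the $k$ closest orbit points lie on one fixed side of $x_i$, and those are separated by at least $1/(2q_n)$) then gives
\begin{equation*}
d_k\ \ge\ \frac{k-2}{4q_n}\qquad\text{for every }k\ge 3.
\end{equation*}

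Adding these bounds,
\begin{equation*}
\sum_{s=0}^{q_n-1}\frac{1}{\|x+s\alpha_0-x_i\|^2}\ \le\ \frac{2}{d^2}+\sum_{k=3}^{q_n}\frac{16\,q_n^2}{(k-2)^2}\ =\ \frac{2}{d^2}+O(q_n^2).
\end{equation*}
The second part of Proposition \ref{prop:DistributionOfIrrationalsWithSimilarContinuedFraction} (or directly \eqref{eq:BestApproxA_ell}) ensures that the orbit $\{s\alpha_0\}_{0\le s<q_n}$ is $1/q_n$-dense in $\T$, so $d\le 1/q_n$ and hence $1/d^2\ge q_n^2$. Consequently the $O(q_n^2)$ term is absorbed into $O(d^{-2})$, and adding over the finitely many singular pieces of $\gamma''$ yields $|S_{q_n}(R,\gamma'')(x)|=O(d^{-2})$, as claimed.

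The only slightly delicate point is the spacing bound $d_k\ge (k-2)/(4q_n)$ coming from the three-distance picture; the rest is a routine adaptation of the proof of Lemma \ref{lem:DK}.
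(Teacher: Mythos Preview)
Your proof is correct and follows the same route as the paper's: both write $\gamma''$ as a finite sum of terms of the form $c/\|x-x_i\|^2$ and then bound $\sum_{s<q_n}\|x+s\alpha_0-x_i\|^{-2}$ by $O(q_n^2)+O(d^{-2})$, absorbing the first term into the second via $d\le 1/q_n$; the paper simply cites \cite[Lemma~4.3]{FaKa} for this estimate while you spell it out. One small arithmetic slip: pigeonhole gives $\lceil k/2\rceil$ points on one side, not $\lceil(k-1)/2\rceil$, so your stated count only yields $d_k\ge (k-3)/(4q_n)$ for odd $k$---but this still sums to $O(q_n^2)$ and the conclusion is unaffected.
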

 \begin{proof}
Note that 
$$\frac{d^2}{dx^2}\Big(-\log|x|\Big)=\frac{1}{x^2}$$
and, hence,
$$\gamma''(x)=\frac{2}{\|x-x_0\|^2}+\frac{\chi_{[0,1-\alpha_0)}}{|1-\alpha_0-x|^2}+\frac{1}{\|x-x_1\|^2}.$$ 
Thus, adapting the proof of Lemma 4.3. in \cite{FaKa}, we obtain
\begin{multline*}
|S_{q_n}(R,\gamma'')(x)|=O\Big(q_n^2\Big)+O\Big([\min_{0\leq s<q_n,0\leq i\leq 1}\|x+s\alpha_0-x_i\|]^{-2}\Big)\\
=O\Big([\min_{0\leq s<q_n,0\leq i\leq 1}\|x+s\alpha_0-x_i\|]^{-2}\Big).
\end{multline*}
 \end{proof}
 
 \section{Proof of Theorem A}\label{sec:ProofOFTheoremA}
Our goal in this section is to present the different results involved in proving  Theorem \hyperlink{thm:main2}{A} and then utilize them to prove that $T_f$ is non-mixing. We collect these results in Proposition \hyperlink{prop:PropertiesOFAlpha0}{B} below. Before stating Proposition {B}, we introduce various definitions which will allow us to reformulate  Theorem \hyperlink{thm:main2}{A} in more precise terms.\\

For a given irrational $\alpha$ with denominators sequence $(q_n)_{n\in\N}$ and a  strictly increasing sequence $(n_k)_{k\in\N}$ of even numbers in $\N$, let 
 \begin{equation}\label{eq:J_alpha}
 J_\alpha(n_k):=\Big[0, \sum_{k=1}^{+\infty}2\|q_{n_k}\alpha\|\mod 1\Big)
 \end{equation}
 and, for any $(x,j)\in\mathbb T\times\Z_2$, set 
 \begin{equation}\label{eq:skewAlpha}
 T(x,j)=T_{\alpha,n_k}(x,j)=(x+\alpha, j+\chi_{J_{\alpha}(n_k)}(x+\alpha)).
 \end{equation}
 We remark that \eqref{eq:J_alpha} and \eqref{eq:skewAlpha} are well defined since for any irrational $\alpha$ and any increasing sequence $(n_k)_{k\in\N}$ in $2\N$, $\sum_{s=1}^\infty\|q_{n_s}\alpha\|<\infty$.
\begin{namedtheorem}{Theorem A$^\prime$} \hypertarget{theorem:Main2'}{}
    There exists $\alpha_0\in \T$, an increasing sequence of even numbers $(n_k)_{k\in\N}$ in $\N$, and an $A>1$ such that the IET $T=T_{\alpha_0,n_k}$ and  $T^f$, the special flow build over $T$ and under the roof function $f=f_{A,{\alpha_0},J_{\alpha_0}(n_k)}$, satisfy the following:
    \begin{enumerate}
        \item [(i)] $T^f$ is not mixing.
        \item [(ii)] $T$ is uniquely ergodic. 
    \end{enumerate}
\end{namedtheorem}
To alleviate the notation, when there is no risk of ambiguity, we will simply use the notations $T$ and $f_A$ for $T_{\alpha_0,n_k}$ and $f_{A,{\alpha_0}, J_{\alpha_0}(n_k)}$.\\

 Item (ii) 
  is obtained in a fashion similar to that used in \cite{chaika} and will be proved in Proposition \ref{thm:UniqueErgodicity} of Section \ref{sec:ue}. We will now prove (i) with the help of  Proposition \hyperlink{prop:PropertiesOFAlpha0}{B} which in turn  will be proved in the next two sections. 
\begin{namedtheorem}{Proposition B}\hypertarget{prop:PropertiesOFAlpha0}{}
There exist  an irrational $\alpha_0=[0;a_1,a_2,....]$, an increasing sequence of even numbers $(n_k)_{k\in\N}$, an increasing sequence $(t_k)_{k\in\N}$ in $\N$, a sequence $(y_k)_{k\in\N}$ in $\mathbb T$, a sequence $(j_k)_{k\in\N}$ in $\{0,1\}$, and a constant $A>1$ such that, if we take  $T=T_{\alpha_0,n_k}$,
$f=f_{A,{\alpha_0},J_{\alpha_0}(n_k)}$, then the following holds with $(q_n)_{n\in\N}$ the sequence of  denominators of the best rational approximations of $\alpha_0$ 
\begin{enumerate}
    \item [($\alpha$.1)] For every $k\in\N$ large enough and $h=h_A$ as defined in \eqref{eq:h}, 
    $$|S_{q_{t_k}}(T,h')(y_k,j_k)+q_{t_k}\log(q_{t_k})|<O(q_{t_k}).$$
    \item [($\alpha$.2)] For every $k\in\N$ large enough,
    \begin{equation}\label{eq:alpha.2}
    \min_{0\leq j\leq q_{t_k}}\|y_k+j\alpha_0-|J_{\alpha_0}(n_k)|\|,\min_{0\leq j\leq q_{t_k}}\|y_k+j\alpha_{0}\|\geq  \frac{1}{16q_{t_k}}.
    \end{equation}
    \item [($\alpha$.3)] For every $k\in\N$ large enough, there exists a $z\in\T$ such that $T^{q_{t_k}}(y_k,j_k)=(z,j_k)$.
     \item [($\alpha$.4)] $\lim_{k\rightarrow\infty}\frac{\log(q_{t_k})}{a_{(t_k+1)}}=0$
    \item [($\alpha$.5)] For every $k\in\N$, $a_{n_{(2k-1)}+1}= 3$.
    \item [($\alpha$.6)]  $\lim_{k\rightarrow\infty}(\sum_{s=1}^kq_{n_s})/q_{n_{(k+1)}}=0$.
    \item [($\alpha$.7)] $\lim_{k\rightarrow\infty}a_{n_{2k}+1}=\infty$. 
    \item [($\alpha$.8)] $\lim_{k\rightarrow\infty}(q_{n_k}\sum_{s=k+1}^\infty\|q_{n_s}\alpha_0\|)=0.$
\end{enumerate}
\end{namedtheorem}
\begin{remark}
  The only properties of  $\alpha_0$ in  Proposition \hyperlink{prop:PropertiesOFAlpha0}{B} that we need to prove that $T^f$ is not mixing are conditions ($\alpha$.1)-($\alpha$.4). Conditions ($\alpha$.5)-($\alpha$.8) will be used in  Section \ref{sec:ue} to prove  the unique ergodicity of $T_{\alpha_0,n_k}$. 
\end{remark}
\begin{proof}[Proof of (i) of Theorem \hyperlink{thm:main2'}{A$^\prime$}]
Let $\alpha_0\in\T$, $(n_k)_{k\in\N}$ in $\N$, $(t_k)_{k\in\N}$ in $\N$, $(y_k)_{k\in\N}\in \T$, $(j_k)_{k\in\N}\in\Z_2$, and $A>1$ be as in the statement of Proposition \hyperlink{prop:PropertiesOFAlpha0}{B}.
 All we need to show is that there exist $C,c>0$ for which conditions $B_1$-$B_5$ in Proposition \ref{prop:crit} hold for sucifficiently large $k\in\N$, $T=T_{\alpha_0,n_k}$, and $f=f_{A,{\alpha_0},J_{\alpha_0}(n_k)}$ as defined in \eqref{eq:f}.\\
 We see that, for sufficiently large $k\in\N$, ($\alpha$.2) implies that 
 \begin{equation*}
     \min_{0\leq j<q_{t_k},1\leq i\leq 4} d\Big(T^j(y_k,j_k),z_i\Big)\geq 
      \min_{0\leq j\leq q_{t_k},0\leq i\leq 1}\|y_k+(j-1)\alpha_0-x_i\|\geq \frac{1}{16q_{t_k}},
 \end{equation*}
 which implies $B.1$ with $c=\frac{1}{32}$. By ($\alpha$.3) and (ii) of Proposition \ref{prop:BasicPropertiesOfd},  we obtain
 $$d((y_k,j_k),T^{q_{t_k}}(y_k,j_k))=\|y_k-(y_k+q_{t_k}\alpha_0)\|=\|q_{t_k}\alpha_0\|<\frac{1}{q_{t_k+1}}<\frac{1}{a_{t_k+1}q_{t_k}}$$
 for $k\in\N$ large enough. 
So, by ($\alpha$.4), $B.2$ holds.\\
Since $f=g+h$,
\begin{multline*}
|S_{q_{t_k}}(T,f')(y_k,j_k)|\leq|S_{q_{t_k}}(T,g')(y_k,j_k)-q_{t_k}\log(q_{t_k})|+|S_{q_{t_k}}(T,h')(y_k,j_k)+q_{t_k}\log(q_{t_k})|\\
= |S_{q_{t_k}}(R_{\alpha_0},\gamma')(y_k)-q_{t_k}\log(q_{t_k})|+|S_{q_{t_k}}(T,h')(y_k,j_k)+q_{t_k}\log(q_{t_k})|,
\end{multline*}
where $\gamma(x)=g(x,0)=g(x,1)$ for each $x\in\T$. 
So, by ($\alpha$.1), ($\alpha$.2), and Corollary \ref{cor:away}, $B.3$ holds (for $k\in\N$ large enough).\\
To prove $B.4$ and $B.5$, take $k\in\N$ large enough to ensure that \eqref{eq:alpha.2} holds and let $x\in\T$ be such that $\|x-y_k\|\leq \frac{1}{32q_{t_k}}$. It follows that 
  \begin{equation}\label{eq:xAwayFromSingularitiesAlpha2}
  \min_{0\leq j\leq q_{t_k}}\|x+j\alpha_0-|J_{\alpha_0}(n_k)|\|,\min_{0\leq j\leq q_{t_k}}\|x+j\alpha_{0}\|
    \geq  \frac{1}{32q_{t_k}}
\end{equation}
and, hence, by Lemma  \ref{lem:DK0}, 
$$|S_{q_{t_k}}(T,g'')(x,j_k)|=|S_{q_{t_k}}(R_{\alpha_0},\gamma'')(x)|=|S_{q_{t_k}}(R_{\alpha_0},\gamma'')(x)|\leq O(q_{t_k}^2)$$
and, by Lemma \ref{lem:DK}, for any $j\in\{0,...,q_{t_k}-1\}$, 
$$|S_{j}(T,g')(x,j_k)|=|S_j(R_{\alpha_0},\gamma')(x)|\leq O(q_{t_k}\log(q_{t_k}))$$
Observe that \eqref{eq:xAwayFromSingularitiesAlpha2} implies that
$$\min_{0\leq j\leq q_{t_k}}\|x+j\alpha_{0}\|
    \geq  \frac{1}{32q_{t_k}}$$
for every $x\in\T$ with $\|x-y_k\|\leq \frac{1}{32q_{t_k}}$ and $k\in\N$ large enough. Thus, by arguing as in the proofs of Lemmas \ref{lem:DK} and  \ref{lem:DK0}, we  obtain
$$|S_{q_{t_k}}(T,h'')(x,j_k)|\leq|S_{q_{t_k}}(T,\frac{A}{\|x\|^2})(x,j_k)|=|S_{q_{t_k}}(R_{\alpha_0},\frac{A}{\|x\|^2})(x)|=O(q_{t_k}^2)$$
and, for each $j<q_{t_k}$, 
$$|S_j(T,h')(x,j_k)|\leq |S_j(T,\frac{A}{\|x\|})(x,j_k)|=O(q_{t_k}\log(q_{t_k})).$$
Thus, 
$$|S_{q_{t_k}}(T,f'')(x,j_k)|\leq |S_{q_{t_k}}(T,g'')(x,j_k)|+|S_{q_{t_k}}(T,h'')(x,j_k)|= O(q_{t_k}^2)$$
and 
$$|S_{j}(T,f')(x,j_k)|\leq |S_{j}(T,g')(x,j_k)|+|S_{j}(T,h')(x,j_k)|=O(q_{t_k}\log(q_{t_k}))$$
proving that $B.4$ and $B.5$ hold.
\end{proof}
 \section{Construction of $\alpha_0$ and $J$ and  choice of the constant A. Proof of Proposition B}
 In this section we prove Proposition \hyperlink{prop:PropertiesOFAlpha0}{B} with the help of Proposition \hyperlink{prop:unbalancedB}{C}, which is closely related with condition ($\alpha$.1) in Proposition  \hyperlink{prop:PropertiesOFAlpha0}{B}. To state Proposition \hyperlink{prop:unbalancedB}{C}, we first need to introduce non-minimal approximations of the map $T$ that will be helpful in the control of the Birkhoff sums of $f$ above $T$.
We remark that the use of these periodic approximations imitates that presented in \cite{chaika}. 

\subsection{Birkhoff sums above non-minimal approximations of the map $T$}
Let $\alpha\in (0,1)$ be an irrational number with  denominators sequence $(q_n^{\alpha})_{n\in\N}=(q_n)_{n\in\N}$ and let $(n_k)_{k\in\N}$ be an increasing sequence in $2\N$. For any $s\in\N$ and any $(x,j)\in \T\times\Z_2$, we define
\begin{equation}\label{eq:SthSkew}
T_{\alpha,s}(x,j)=(x+\alpha, j+\chi_{J^s_\alpha(n_k)}(x+\alpha)),
\end{equation}
where $J^s_\alpha(n_k)=[0,2\sum_{k=1}^s\|q_{n_k}\alpha\|\mod 1)$.\\
The transformations $(T_{\alpha,s})$ are non-minimal approximations of $T_\alpha=T_{\alpha,n_k}$ in the following sense: for every large enough $s\in \N$ and every $K\in \N$,
 \begin{equation}\label{eq:saz}
 \text{ if } x\notin \bigcup_{w=1}^K R^{-w}_\alpha\Big(J_\alpha(n_k)\setminus J^s_{\alpha}(n_k)\Big),\;\;\; \text{ then }\;\;\;\; T^K_\alpha(x,j)=T^K_{\alpha,s}(x,j).
 \end{equation}
For each $m\in\N$, we let 
\begin{equation}\label{eq:DefnJ'_m}
J'_{\alpha,m}=J'_{\alpha,m}(n_k)=\Big[2\sum_{s=0}^{m-1}q_{n_s}\alpha,2\sum_{s=0}^{m-1}q_{n_s}\alpha+q_{n_m}\alpha\Big)\times\Z_2,
\end{equation} 
where $q_{n_0}=0$ and for any $x\in\T$, 
$$x\in\Big[2\sum_{s=0}^{m-1}q_{n_s}\alpha,2\sum_{s=0}^{m-1}q_{n_s}\alpha+q_{n_m}\alpha\Big)\subseteq \T$$
if and only if there is a representative $y\in\R$ of $x$ with $$2\sum_{s=0}^{m-1}q_{n_s}\alpha\leq y<2\sum_{s=0}^{m-1}q_{n_s}\alpha+\|q_{n_m}\alpha\|.$$
(We remark that for any $n\in\N$, $|q^\alpha_n\alpha-p^\alpha_n|<1/2$.)\\
We also  set 
  \begin{multline}\label{eq:DefnU_m}
  U_{\alpha,m}=U_{\alpha,m}(n_k)\\
  =\left(U_{\alpha,(m-1)}\setminus \bigcup_{i=0}^{q_{n_m}-1}T_{\alpha,(m-1)}^i(J'_{\alpha,m})\right)\cup\left(V_{\alpha,(m-1)}\cap\bigcup_{i=0}^{q_{n_m}-1}T_{\alpha,(m-1)}^i(J'_{\alpha,m})\right),
  \end{multline}
     and
  \begin{multline}\label{eq:DefnV_m}
  V_{\alpha,m}=V_{\alpha,m}(n_k)\\
 =\left(V_{\alpha,(m-1)}\setminus \bigcup_{i=0}^{q_{n_m}-1}T_{\alpha,(m-1)}^i(J'_{\alpha,m})\right)\cup\left(U_{\alpha,(m-1)}\cap\bigcup_{i=0}^{q_{n_m}-1}T_{\alpha,(m-1)}^i(J'_{\alpha,m})\right),
  \end{multline}
(where $U_{\alpha,0}=\mathbb T\times\{0\}$, $V_{\alpha,0}=\mathbb T\times\{1\}$, and $T_{\alpha,0}(x,j)=(x+\alpha,j)$). We remark that $U_{\alpha,m}$ and $V_{\alpha,m}$ form a partition of $\T\times\Z_2$ for each $m\in\N\cup\{0\}$ and, hence, for $m\in\N$,
\begin{equation}\label{eq:DefnOfU_m'}
U_{\alpha,m}
  =\left( U_{\alpha,(m-1)}\cup\bigcup_{i=0}^{q_{n_m}-1}T_{\alpha,(m-1)}^i(J'_{\alpha,m})\right) \setminus \left(U_{\alpha,(m-1)}\cap \bigcup_{i=0}^{q_{n_m}-1}T_{\alpha,(m-1)}^i(J'_{\alpha,m})\right),
\end{equation}
which in turn implies that 
\begin{equation}\label{eq:SymDiffOfU_m}
    U_{\alpha,m}\triangle U_{\alpha,(m-1)}=\bigcup_{i=0}^{q_{n_m}-1}T_{\alpha,(m-1)}^i(J'_{\alpha,m}).
\end{equation}

We record for future use the following facts about $T_{\alpha,m}$, $U_{\alpha,m}$, and $V_{\alpha,m}$.
\begin{lemma}\label{lem:BasicFactsAboutUalphaTalpha}
For any irrational $\alpha$ with denominator sequence $(q_n)_{n\in\N}$ and any increasing sequence $(n_k)_{k\in\N}$ in $2\N$,  the following statements hold for any $m\in\N$:
\begin{enumerate}
    \item [(i)] If $2\sum_{s=1}^m\|q_{n_s}\alpha\|<1$, then $U_{\alpha,m}$ and $V_{\alpha,m}$ are both $T_{\alpha,m}$-invariant. 
    \item [(ii)] $U_{\alpha,m}$ and $V_{\alpha,m}$ are disjoint unions of intervals of the form 
    \begin{equation}\label{eq:ShapeOfUandV}
    [L\alpha\mod 1,R\alpha\mod 1)\times\{j\},
    \end{equation}
    $L,R\in\{0,...,2\sum_{s=1}^mq_{n_s}-1\}$ and $j\in\{0,1\}$.\footnote{
     Note that in \eqref{eq:ShapeOfUandV} we make use of the identification  $0\equiv 1\mod 1$
    } 
   Furthermore, 
    the set of discontinuities of $\chi_{U_{\alpha,m}}$ and $\chi_{V_{\alpha,m}}$ are each equal to  
    $$\Delta_m=\{(k\alpha\mod 1,j)\,|\,k\in\{0,...,2\sum_{s=1}^mq_{n_s}-1\}\text{ and }j\in\{0,1\}\}.$$
    
    \item [(iii)]  Let $(x,j)\in\T\times\Z_2$. Then 
    \begin{equation}\label{eq:InvolutionCondition}
    (x,j)\in U_{\alpha,m},\text{ if and only if }(x,j+1\mod 2)\in V_{\alpha,m}.
    \end{equation}
    
    \item [(iv)] If $2\sum_{s=1}^{r}q_{n_s}<q_{n_r+1}$ for each $r\in\{1,...,m\}$, then 
\begin{equation}\label{eq:U_mToTheLeftOF0}
(1-\|q_{n_m}\alpha\|,1)\times\{0\}\subseteq U_{\alpha,m}.
\end{equation}
and
\begin{equation}\label{eq:U_mToTherightOF0}
(0,\|q_{n_m}\alpha\|)\times\{1\}\subseteq U_{\alpha,m}
\end{equation}
\end{enumerate}
\end{lemma}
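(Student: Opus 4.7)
I would prove all four claims simultaneously by induction on $m$. The base case $m=0$ follows directly from $U_{\alpha,0} = \T \times \{0\}$, $V_{\alpha,0} = \T \times \{1\}$, and $T_{\alpha,0}(x,j) = (x+\alpha,j)$: each sheet is trivially $T_{\alpha,0}$-invariant, is a single interval of the form \eqref{eq:ShapeOfUandV} under the identification $0 \equiv 1 \mod 1$, satisfies the swap relation (iii), and (iv) is vacuous since $\|q_{n_0}\alpha\| = 0$.

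For the inductive step I would first establish (iii), which is the most transparent: the recursive formulas \eqref{eq:DefnU_m} and \eqref{eq:DefnV_m} become one another when $U_{\alpha,m-1}$ and $V_{\alpha,m-1}$ are interchanged, and because $J'_{\alpha,m}$ is defined as an interval in $\T$ times all of $\Z_2$, the tower $\bigcup_{i=0}^{q_{n_m}-1} T^i_{\alpha,m-1}(J'_{\alpha,m})$ is invariant under $j \mapsto j+1$; combined with the inductive (iii), this yields (iii) for $m$. For (ii), \eqref{eq:SymDiffOfU_m} gives that $U_{\alpha,m} \triangle U_{\alpha,m-1}$ is exactly the tower; the base $J'_{\alpha,m}$ has first-coordinate endpoints of the form $L\alpha \mod 1$ with $L = 2\sum_{s=1}^{m-1} q_{n_s}$ and $L + q_{n_m}$, and successive applications of $T_{\alpha,m-1}$ shift these endpoints by $\alpha$ and split each image only when it crosses an element of $\Delta_{m-1}$, which by the inductive (ii) is again of the form $L\alpha$ with $L < 2\sum_{s=1}^{m-1}q_{n_s}$; all resulting endpoints therefore lie in the required range, giving (ii) and the description of $\Delta_m$.

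Claim (i) is the main obstacle. The key computation is that the first coordinates of $J'_{\alpha,m}$ and $T^{q_{n_m}}_{\alpha,m-1}(J'_{\alpha,m})$ together cover $J^m_\alpha \setminus J^{m-1}_\alpha$; under the hypothesis $2\sum_{s=1}^m \|q_{n_s}\alpha\| < 1$, this means the extra-flip set on which $T_{\alpha,m}$ differs from $T_{\alpha,m-1}$, namely $R_\alpha^{-1}(J^m_\alpha \setminus J^{m-1}_\alpha) \times \Z_2$, splits into the $T_{\alpha,m-1}$-preimage of the base $J'_{\alpha,m}$ and the top level $T^{q_{n_m}-1}_{\alpha,m-1}(J'_{\alpha,m})$. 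By best-approximation estimates both of these sets are disjoint from the rest of the tower, so every $(x,j)$ in the extra-flip set has the property that exactly one of $(x,j)$ and $T_{\alpha,m-1}(x,j)$ lies in the tower. A three-case analysis (namely $(x,j)$ outside the extra-flip set, $(x,j)$ in the preimage of the base, $(x,j)$ in the top level), combined with the inductive $T_{\alpha,m-1}$-invariance of $U_{\alpha,m-1}$ and with the inductive (iii) used to convert $U_{\alpha,m-1}$-membership into $V_{\alpha,m-1}$-membership under the extra sheet-flip, shows in each case that $(x,j) \in U_{\alpha,m-1} \triangle \text{tower}$ if and only if $T_{\alpha,m}(x,j) \in U_{\alpha,m-1} \triangle \text{tower}$.

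Finally, for (iv), I would use the denominator-growth hypothesis $2\sum_{s=1}^r q_{n_s} < q_{n_r+1}$, which by the best-approximation inequality $\|L\alpha\| \geq \|q_{n_m}\alpha\|$ for $L < q_{n_m+1}$ implies that no $L\alpha \mod 1$ with $1 \le L < 2\sum_{s=1}^m q_{n_s}$ falls into the open intervals $(1-\|q_{n_m}\alpha\|,1)$ or $(0,\|q_{n_m}\alpha\|)$. Hence the two intervals in (iv) sit inside single components of the decomposition given by (ii), and one verifies by induction on (iv) that these components lie in $U_{\alpha,m}$, using that the base of $J'_{\alpha,m}$ sits at the positive distance $2\sum_{s=1}^{m-1}\|q_{n_s}\alpha\|$ from $0$ and has width only $\|q_{n_m}\alpha\|$. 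The dominant difficulty throughout is (i), where the geometric picture is clean but the algebraic cancellation between the extra sheet-flip and the tower symmetric difference demands careful bookkeeping of sheets under the modular identification on $\T$.
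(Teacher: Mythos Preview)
Your approach is broadly similar to the paper's---both proceed by induction on $m$---but there are two notable differences.

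For (i), the paper simply cites \cite[Lemma 4.1]{chaika} rather than giving a self-contained argument, whereas you sketch a direct three-case proof tracking how the extra sheet-flip interacts with the tower. Your outline is plausible, though the precise identification of $R_\alpha^{-1}(J^m_\alpha\setminus J^{m-1}_\alpha)\times\Z_2$ with the union of the $T_{\alpha,m-1}$-preimage of the base and the top level of the tower would need to be written out carefully (this is where the parity hypothesis $n_m\in 2\N$ and the best-approximation inequalities enter).

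For (ii), there is a genuine gap. Your argument shows that the discontinuity set of $\chi_{U_{\alpha,m}}$ is \emph{contained in} $\Delta_m$, but you do not address the reverse inclusion---that \emph{every} point of $\Delta_m$ is actually a discontinuity. This is not automatic: one must rule out cancellations in which two adjacent intervals of $U_{\alpha,m}$ on the same sheet merge at a point $(k\alpha,j)$, erasing a would-be boundary point. The paper handles this via a dedicated induction, writing $\chi_{U_{\alpha,m+1}}=\chi_{U_{\alpha,m}}(1-\chi_A)+(1-\chi_{U_{\alpha,m}})\chi_A$ with $A$ the new tower, and arguing separately that the new points $\Delta_{m+1}\setminus\Delta_m$ become discontinuities (since $\chi_{U_{\alpha,m}}$ is continuous there while $\chi_A$ jumps) and that the old points of $\Delta_m$ persist as discontinuities (by restricting to the interior of $A$ and of its complement). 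This exact equality is used downstream---for instance in counting components of $U_{\alpha,m}$ and in comparing $U_{\alpha,m}$ with $U_{\beta,m}$ for nearby $\alpha,\beta$---so it cannot be omitted.

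For (iii) and (iv) your approach matches the paper's.
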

\begin{proof}
\noindent {\tiny$\square$} \textit{Proof of (i)}: That $U_{\alpha,m}$ and $V_{\alpha,m}$ are $T_{\alpha,m}$-invariant follows from \cite[Lemma 4.1]{chaika}.\\
\noindent {\tiny$\square$} \textit{Proof of (ii)}: 
We utilize the recursive equations \eqref{eq:DefnU_m} and \eqref{eq:DefnV_m}. First we note that the  end-points of the components of the sets $T_{\alpha,r-1}^{i_r}J'_{\alpha,r}$, $r\in\{1,...,m\}$ and $i_r\in\{0,...,q_{n_r}-1\}$, belong to
$$\Delta_m=\{(k\alpha \mod 1,j)\,|\,k\in\{0,...,2\sum_{s=1}^mq_{n_s}-1\}\text{ and }j\in\{0,1\}\}.$$
Thus, \eqref{eq:ShapeOfUandV} holds.\\
To see that the set of  discontinuities of $\chi_{U_{\alpha,m}}$ and $\chi_{V_{\alpha,m}}$ each coincides with $\Delta_m$, we proceed by induction on $m\in\N$. When $m=1$, the result follows from \eqref{eq:DefnU_m} and \eqref{eq:DefnV_m}. Now let $m\in\N$ and suppose that $\Delta_m$ is the set of discontinuities of $\chi_{U_{\alpha,m}}$ and $\chi_{V_{\alpha,m}}$. Let $A=\bigcup_{i=0}^{q_{n_{(m+1)}}-1}T^i_{\alpha,m}J'_{\alpha,(m+1)}$. Set 
$\gamma=\chi_{U_{\alpha,m}}(1-\chi_A)$ and $\psi=(1-\chi_{U_{\alpha,m}})\chi_A$. Since for each $(x,j)\in\Delta_{m+1}\setminus\Delta_m$, $\chi_{U_{\alpha,m}}$ is continuous at $(x,j)$, we have that  for each $k\in B:=\{2\sum_{s=1}^mq_{n_s},...,2\sum_{s=1}^{m+1}q_{n_s}-1\}$ and $j\in\{0,1\}$, $\chi_{U_{\alpha,m}}$ is continuous at $(k\alpha,j)$. It follows that for each $k\in B$ and $j\in\{0,1\}$ and any open neighborhood $\mathcal O$ of $(k\alpha,j)$, one can find $y, z\in \mathcal O$ with $\chi_A(y)=0$, $\chi_A(z)=1$,  
$$\gamma(y)=\chi_{U_{\alpha,m}}(k\alpha,j)\text{ and }\psi(y)=0,$$
and 
$$\gamma(z)=0\text{ and }\psi(z)=1-\chi_{U_{\alpha,m}}(k\alpha,j)$$
Noting that 
$$\chi_{U_{\alpha,(m+1)}}=\gamma+\psi,$$
we see that every element of $\Delta_{m+1}\setminus\Delta_m$ is a discontinuity of $\chi_{U_{\alpha,(m+1)}}$.\\
Observe now that, since $\gamma=0$ on $A$ and $\psi=0$ on $(\T\times\Z_2)\setminus A$, every discontinuity of $\psi$ on the interior of  $A$  and every discontinuity of $\gamma$ on the interior of $(\T\times\Z_2)\setminus A$ is a discontinuity of  $\chi_{U_{\alpha,(m+1)}}$. Thus, $\Delta_m$ is a subset of the set of  discontinuities of $\chi_{U_{\alpha,(m+1)}}$ and, hence,  $\Delta_{m+1}$ is the set of discontinuities of  $\chi_{U_{\alpha,(m+1)}}$ (and, by a similar argument, of $\chi_{V_{\alpha,(m+1)}}$). We are done. \\
\noindent {\tiny$\square$} \textit{Proof of (iii)}: This can be easily checked by  induction on $m\in\N\cup\{0\}$. \\
\noindent {\tiny$\square$} \textit{Proof of (iv)}: First note that the sets $J'_{\alpha,1}$,...,$T_{\alpha,0}^{q_{n_1}-1}J'_{\alpha,1}$ are pairwise disjoint and $J'_{\alpha,1}=[0,\|q_{n_1}\alpha\|)\times\Z_2$. Thus, by \eqref{eq:DefnU_m},
\begin{equation}\label{eq:rightSideOFZeroU_1}
[0,\|q_{n_1}\alpha\|)\times\{1\}\subseteq U_{\alpha,1}.
\end{equation}
Note now that for each $r\in\{1,...,m\}$ and each $i_r\in\{0,...,q_{n_r}-1\}$,
$$T_{\alpha,{r-1}}^{i_r}J'_{\alpha,r}=[(2\sum_{s=0}^{r-1}q_{n_s}+i_r)\alpha,(2\sum_{s=0}^{r-1}q_{n_s}+q_{n_r}+i_r)\alpha)\times\Z_2.$$
Since  $2\sum_{s=1}^rq_{n_s}<q_{n_r+1}$, we have that if $(2\sum_{s=0}^{r-1}q_{n_s}+i_r)\neq 0$, then 
\begin{equation}\label{eq:AwayFromZero}
\|(2\sum_{s=0}^{r-1}q_{n_s}+i_r)\alpha\|,\|(2\sum_{s=0}^{r-1}q_{n_s}+q_{n_r}+i_r)\alpha\|\geq \|q_{n_r}\alpha\|.
\end{equation}
Noting that $(2\sum_{s=0}^{r-1}q_{n_s}+i_r)=0$ only when $r=1$ and $i_r=0$ and that 
$$q_{n_r}\alpha\mod 1>0$$
for each $r\in\{1,...,m\}$, 
we see that for every $r\in\{1,...,m\}$ and every $i_r\in\{0,...,q_{n_r}-1\}$ with $r\neq 1$ or $i_r\neq 0$,
$$\|q_{n_r}\alpha\|\leq (2\sum_{s=0}^{r-1}q_{n_s}+i_r)\alpha\mod 1<(2\sum_{s=0}^{r-1}q_{n_s}+q_{n_r}+i_r)\alpha\mod 1\leq1-\|q_{n_r}\alpha\|$$
Thus, since we also have 
$$0<q_{n_1}\alpha\mod 1\leq 1-\|q_{n_1}\alpha\|,$$
we obtain from \eqref{eq:DefnU_m} that
$$(1-\|q_{n_m}\alpha\|,1)\times\{0\}\subseteq U_{\alpha,0}\setminus \bigcup_{r=1}^m\bigcup_{i_r=0}^{q_{n_r}-1} T_{\alpha,(r-1)}^{i_r}J'_{\alpha,r}\subseteq U_{\alpha,m}$$
and by \eqref{eq:rightSideOFZeroU_1}, when $m>1$,
$$(0,\|q_{n_m}\alpha\|)\times\{1\}\subseteq U_{\alpha,1}\setminus\bigcup_{r=2}^m\bigcup_{i_r=0}^{q_{n_r}-1} T_{\alpha,(r-1)}^{i_r}J'_{\alpha,r}\subseteq U_{\alpha,m}$$
proving that  \eqref{eq:U_mToTheLeftOF0} and \eqref{eq:U_mToTherightOF0} hold. 
\end{proof}
\begin{remark}\label{rem:HierarchyOFConditions}
 Note that the condition that $2\sum_{s=1}^m\|q_{n_s}\alpha\|<1$ in (i) of Lemma \ref{lem:BasicFactsAboutUalphaTalpha} is weaker than the condition that for each $r\in\{1,...,m\}$, $2\sum_{s=1}^rq_{n_s}<q_{n_r+1}$ in (iv)  of Lemma \ref{lem:BasicFactsAboutUalphaTalpha}. Indeed, all we need to show is that  for any given $m\in\N$, whenever $2q_{n_1}<q_{n_1+1}$, one has $2\sum_{s=1}^m\|q_{n_s}\alpha\|<1$. To see this, note that since $(n_k)_{k\in\N}$ is an increasing sequence in $2\N$, we have that for each $s\in\N$, $2q_{n_s+1}<q_{n_{(s+1)}+1}$. Thus, 
\begin{equation}\label{eq:BoundOnSumOfClosestIntegers}
\sum_{s=k+1}^\infty\|q_{n_s}\alpha\|<\sum_{s=k+1}^\infty\frac{1}{q_{n_s+1}}<\frac{1}{q_{n_{(k+1)}+1}}\sum_{s=0}^\infty\frac{1}{2^s}=\frac{2}{q_{n_{(k+1)}+1}},
\end{equation}
for $k=0,1,...$. Since $2q_{n_1}<q_{n_1+1}$ implies that $a_{n_1+1}\geq 2$, we obtain that $q_{n_1+1}\geq 2q_{n_1}+q_{n_1-1}$. But $q_{n_1}\geq q_2\geq 2$ and $q_{n_1-1}\geq q_1\geq 1$ and, so,
$$2\sum_{s=1}^m\|q_{n_s}\alpha\|<\frac{4}{q_{n_1+1}}\leq \frac{4}{5}<1,$$
proving the claim.
\end{remark}
 Let $(b_n)_{n=1}^\infty$ be a sequence in $\N$. For any $\ell\in\N$ we define 
$$\cA_\ell(b_1,...,b_\ell)=\cA_\ell:=\{\alpha\in [0,1)\;:\; a^\alpha_i=b_i, \text{ for every } i\in\{1,...,\ell\}\}.
$$
Note that if $\alpha,\beta\in \cA_\ell(b_1,...,b_\ell)$ for some sequence $(b_n)_{n=1}^\infty$, then $p_n^\alpha=p_n^\beta$ and  $q_n^\alpha=q_n^\beta$ for $n=1,2,...,\ell$.\\
\begin{namedtheorem}{Proposition C}\hypertarget{prop:unbalancedB}{}
Let $(b_n)_{n\in\N}$ be a sequence in $\N$ and let $m\in\N$ be such that $m>1$. Form the sequence $(q_n)_{n\in\N}$ defined recursively by 
$$q_{n+1}=b_{n+1}q_n+q_{n-1},$$
$q_{-1}=0$, and $q_0=1$. Let $(n_k)_{k\in\N}$ be an increasing sequence in $2\N$ such that (a)  $b_{n_m+1}=3$ and (b)  $2\sum_{s=1}^{r}q_{n_s}<q_{n_{r}+1}$ for each $r\in\{1,...,m\}$. Then,  for any $\ell\in\N$ with $q_\ell>q_{n_m}^2$, any $\alpha\in \cA_\ell(b_1,...,b_\ell)$, any $n\geq\ell$, and any $(x,j)\in U_{\alpha,m}$ with
$$\min\{\|x+k\alpha\|\,|\,k\in\{0,...,q^\alpha_{n}-1\}\}\geq \frac{1}{16q_n^\alpha},$$
one has, 
\begin{equation}\label{eq:KeyEStimateOnA_ell}
\left| S_{q_n^\alpha}(T_{\alpha,m},h_1')(x,j)+q^\alpha_n\log(q_n^\alpha)-q^\alpha_n\Phi\right|<155q^\alpha_n,
\end{equation}
where $h_1$ is defined as in \eqref{eq:h} and $\Phi=\Phi_m(b_1,...,b_{n_m+1},n_1,...,n_m)\in\R$ satisfies 
\begin{equation}\label{eq:BoundsForGlobalPhi}
\frac{\log(q_{n_m})}{10}-18q_{n_{(m-1)}+1}\leq  \Phi
\leq 10\log(q_{n_m})+18q_{n_{(m-1)}+1}.
\end{equation}
\end{namedtheorem}
\subsection{Construction of $(a_k)_{k\in\N}$, $(n_k)_{k\in\N}$, and $(t_k)_{k\in\N}$}

To prove Proposition  \hyperlink{prop:PropertiesOFAlpha0}{B},
we will first construct inductively the sequences $(a_k)_{k\in\N}$, $(n_k)_{k\in\N}$, and $(t_k)_{k\in\N}$. Then we will show that the irrational number $\alpha_0$ with continued fraction expansion $[0;a_1,...]$ and denominators sequence $(q_n)_{n\in\N}$ satisfies conditions ($\alpha$.1)-($\alpha$.8) for some sequences $(y_k)_{k\in\N}$ and $(j_k)_{k\in\N}$ and  a constant $A>1$.

The following lemma provides us with sequences  $(a_k)_{k\in\N}$, $(n_k)_{k\in\N}$, and $(t_k)_{k\in\N}$ satisfying conditions ($\alpha$.4')-($\alpha$.7') and ($\beta$.1)-($\beta$.4) below.  Conditions ($\alpha$.4')-($\alpha$.7') have as immediate consequences conditions ($\alpha$.4)-($\alpha$.7) in  Proposition  \hyperlink{prop:PropertiesOFAlpha0}{B} and can be used to prove condition ($\alpha$.8). Conditions ($\beta$.1)-($\beta$.4) are needed to prove conditions ($\alpha$.1)-($\alpha$.3). 
\begin{lemma}\label{lem:ConstructionOfa_kn_kt_k}
 There exist $(a_k)_{k\in\N}$ in $\N$, a strictly increasing sequences  $(n_k)_{k\in\N}$ in $2\N$,  and  a strictly increasing sequence $(t_k)_{k\in\N}$ in $\N$ such that for any  $k\in\N$, 
\begin{enumerate}
    \item [($\alpha$.4')]  $\log(q_{t_k})/a_{(t_k+1)}<\frac{1}{k}$
    \item [($\alpha$.5')]  $a_{n_1+1}=a_{n_{(2k+1)}+1}=3$.
    \item [($\alpha$.6')]  (i) $\frac{1}{q_{n_{2k}}}\sum_{s=1}^{2k-1}q_{n_s}<\frac{1}{2k}$ and (ii) $\frac{1}{q_{n_{(2k+1)}}}\sum_{s=1}^{2k}q_{n_s}<\frac{1}{2k+1}$.
    \item [($\alpha$.7')]  $a_{n_{2k}+1}>k$.
    \item [($\beta$.1)]  $2\sum_{s=1}^{2k}q_{n_s}<q_{n_{2k}+1}$.
    \item [($\beta$.2)]  $\frac{1}{10}-\frac{18q_{n_{2k}+1}}{\log(q_{n_{(2k+1)}})}>\frac{1}{15}$. 
    \item [($\beta$.3)]  $q_{n_{(2k+1)}}^2<q_{t_k}$ and  $|\log(q_{t_k})-60\Phi_k|<\log(2)$, where 
    $$\Phi_k=\Phi_{2k+1}(a_1,...,a_{n_{(2k+1)}+1},n_1,...,n_{(2k+1)})$$
    is defined as in \eqref{eq:KeyEStimateOnA_ell}.
    \item [($\beta$.4)] If $k>1$, $t_{k-1}<n_{2k}$. 
\end{enumerate}
\end{lemma}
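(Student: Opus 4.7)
The construction is inductive on $k\geq 1$, building one ``block'' at each stage that supplies the next entries of all three sequences. Before stage $1$ I pick an even $n_1\geq 2$, set $a_1=\cdots=a_{n_1}=1$ and $a_{n_1+1}=3$ (this gives the $k=0$ instance of $\alpha.5'$). At each subsequent stage every partial quotient whose index is not explicitly assigned will be set to $1$, so the only freedom is in the positions $n_{2k}+1$, $n_{2k+1}+1$, $t_k$, and $t_k+1$, together with the choice of the indices $n_{2k},\,n_{2k+1},\,t_k$ themselves.

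The stage-$k$ choices (given the previous stages' data) are made in the following order.
\textbf{(i)} Pick $n_{2k}$ even with $n_{2k}>\max(n_{2k-1}+1,\,t_{k-1})$ and so large that $q_{n_{2k}}>(2k)\sum_{s=1}^{2k-1}q_{n_s}$; by the exponential growth of $(q_n)$ this is possible and handles $\beta.4$ and $\alpha.6'$(i).
\textbf{(ii)} Set $a_{n_{2k}+1}:=k+5$, yielding $\alpha.7'$ and, together with (i), $\beta.1$.
\textbf{(iii)} Pick $n_{2k+1}$ even with $n_{2k+1}>n_{2k}+1$ and so large that $q_{n_{2k+1}}>(2k+1)\sum_{s=1}^{2k}q_{n_s}$ and $\log q_{n_{2k+1}}>540\,q_{n_{2k}+1}$; this gives $\alpha.6'$(ii) and $\beta.2$.
\textbf{(iv)} Set $a_{n_{2k+1}+1}:=3$ for $\alpha.5'$.
\textbf{(v)} Set $t_k:=n_{2k+1}+2$ and tune $a_{t_k}\in\N$ to secure $\beta.3$ (the key step, explained below).
\textbf{(vi)} Set $a_{t_k+1}:=\lceil k\log q_{t_k}\rceil+1$ for $\alpha.4'$. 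Proceed to stage $k+1$.

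All steps other than (v) are straightforward ``pick large enough'' arguments. For (v) the point is that by the time we get there $\Phi_k=\Phi_{2k+1}(a_1,\ldots,a_{n_{2k+1}+1},n_1,\ldots,n_{2k+1})$ is a fixed real number, and $\beta.2$ arranged in (iii) together with \eqref{eq:BoundsForGlobalPhi} gives
$\Phi_k\geq \log q_{n_{2k+1}}/10-18q_{n_{2k}+1}\geq \log q_{n_{2k+1}}/15$,
hence $60\Phi_k\geq 4\log q_{n_{2k+1}}$. As $a_{t_k}$ runs over $\N$, the attainable values $\log q_{t_k}(a_{t_k})=\log(a_{t_k}q_{t_k-1}+q_{t_k-2})$ form a strictly increasing sequence whose consecutive gaps are at most
\[
\log\!\bigl(1+\tfrac{1}{a_{t_k}}\bigr)\leq \log 2,
\]
and whose minimum (at $a_{t_k}=1$) is $\log(q_{t_k-1}+q_{t_k-2})\leq \log(5q_{n_{2k+1}})$, which is comfortably below $60\Phi_k$. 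Consequently some $a_{t_k}\in\N$ places $\log q_{t_k}$ within $\log 2$ of $60\Phi_k$, yielding the second half of $\beta.3$; the first half $q_{n_{2k+1}}^2<q_{t_k}$ follows for free from $\log q_{t_k}\geq 60\Phi_k-\log 2\geq 4\log q_{n_{2k+1}}-\log 2>2\log q_{n_{2k+1}}$.

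The main obstacle is exactly step (v): the tolerance $\log 2$ in $\beta.3$ matches the worst-case gap between consecutive attainable values of $\log q_{t_k}$, so the hitting argument only closes because $\beta.2$ was prearranged to push $60\Phi_k$ well above the smallest attainable value of $\log q_{t_k}$. All the other conditions are secured by elementary size choices in (i)--(iv) and (vi), or follow automatically from the lower bound on $\log q_{n_{2k+1}}$ built into $\beta.2$.
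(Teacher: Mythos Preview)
Your proof is correct and follows essentially the same inductive scheme as the paper: at each stage you choose $n_{2k}$, then $a_{n_{2k}+1}$, then $n_{2k+1}$, then $a_{n_{2k+1}+1}=3$, then $t_k$, then $a_{t_k+1}$, verifying the same conditions in the same order. The one genuine variation is in the mechanism for securing $(\beta.3)$: the paper sets all partial quotients beyond $n_{2k+1}+1$ equal to $1$ and then slides the \emph{index} $t_k$ along the Fibonacci-like tail until $q_{t_k}\in[\tfrac12 e^{60\Phi_k},2e^{60\Phi_k}]$ (using $q_t<2q_{t-1}$ there), whereas you fix $t_k=n_{2k+1}+2$ and tune the single \emph{partial quotient} $a_{t_k}$, using that the map $a\mapsto\log(aq_{t_k-1}+q_{t_k-2})$ has step size strictly below $\log 2$ and starts below $60\Phi_k$. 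Both arguments exploit the same numerical margin $60\Phi_k\ge 4\log q_{n_{2k+1}}$ coming from $(\beta.2)$ and \eqref{eq:BoundsForGlobalPhi}, and both deliver $q_{n_{2k+1}}^2<q_{t_k}$ for free from this lower bound; your version is slightly more economical in that it uses only one extra index past $n_{2k+1}+1$.
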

\begin{proof}
We will construct the sequences $(a_k)_{k\in\N}$, $(n_k)_{k\in\N}$, and $(t_k)_{k\in\N}$ inductively. At the $N$-th stage of the construction, we will pick $n_{2N}, n_{(2N+1)}$, $t_N$, and $a_{t_{(N-1)}+2},...,a_{t_N+1}$  so 
that $n_1,...,n_{(2N+1)}$, $t_1,...,t_N$, and $a_1,...,a_{t_N+1}$ satisfy 
conditions ($\alpha$.4')-($\alpha$.7') and ($\beta$.1)-($\beta$.4) for $k=1,...,N$. When $N=1$, we will also need to pick $n_1$, $a_{n_1+1}$, and let $t_{0}=-1$. 

\underline{Base Case}: For the first stage of the construction (i.e. $N=1$), we need to find the numbers $n_1,n_2, n_3$, $t_1$, and $a_1,...,a_{t_1+1}$. Note that since $N=1$, condition ($\beta$.4) holds trivially. \\
\noindent {\tiny$\square$} \textit{Conditions ($\alpha$.5'), ($\alpha$.7') and  ($\beta$.1)}: For this,  let  $n_1=2$, $n_2=4$, and let $\alpha_1$ be an irrational number with continued fraction expansion $[0;a^{(1)}_1,a^{(1)}_2,...]$ such that for $k\not\in \{3,5\}$, $a^{(1)}_k=1$, $a_3^{(1)}=3$, and $a_5^{(1)}>2$ is large enough to ensure that
\begin{equation}\label{eq.a.9'k=1}
2(q^{\alpha_1}_{n_1}+q^{\alpha_1}_{n_2})\leq a^{(1)}_5q^{\alpha_1}_{n_2}=a^{(1)}_5q^{\alpha_1}_4<q^{\alpha_1}_{n_2+1}=q^{\alpha_1}_5.
\end{equation}
\noindent {\tiny$\square$} \textit{Conditions ($\alpha$.6') item (ii) and  ($\beta$.2)}: 
Pick now $n_3\in 2\N$ large enough to ensure that $5<n_3$,
\begin{equation}\label{eq.a.6'k=3}
\frac{\sum_{s=1}^2q^{\alpha_1}_{n_s}}{q^{\alpha_1}_{n_{3}}}<\frac{1}{3}
\end{equation}
and
\begin{equation}\label{eq:LowerBoundForPhi_1}
\frac{1}{10}-\frac{18q^{\alpha_1}_{n_{2}+1}}{\log(q^{\alpha_1}_{n_{3}})}>\frac{1}{15}.
\end{equation}
\noindent {\tiny$\square$} \textit{Condition ($\alpha$.5')}: Let $\beta_1$ be the irrational number with continued fraction expansion $[0;b^{(1)}_1,b^{(1)}_2,...]$ where 
$b^{(1)}_k=a^{(1)}_k$ if $k\neq n_3+1$ and $b_{n_3+1}^{(1)}=3$ (so, in particular $b_k^{(1)}=1$ for $k\geq n_3+1$).\\
\noindent {\tiny$\square$} \textit{Condition ($\beta$.3)}: Let $(n'_k)_{k\in\N}$ be an increasing sequence in $2\N$ with $n'_k=n_k$ for $k=1,2,3$. Since $b^{(1)}_k=a^{(1)}_k$ for $k\leq n_3$, 
$$2q^{\beta_1}_{n_1}<3q^{\beta_1}_{n_1}+q^{\beta_1}_{n_1-1}=q_{n_1+1}^{\beta_1}$$
and, by \eqref{eq.a.9'k=1} and our choice of $b_{n_3+1}^{(1)}=3$,
$$2\sum_{s=1}^3q_{n_s}^{\beta_1}<2q_{n_3}^{\beta_1}+q_{n_2+1}^{\beta_1}<3q_{n_3}^{\beta_1}<q_{n_3+1}.$$
Thus, the hypothesis of Proposition \hyperlink{prop:unbalancedB}{C} holds with $m=3$  and $(b_n)_{n\in\N}=(b^{(1)}_k)_{k\in\N}$.\\

Let $\Phi_1=\Phi_3(b_1^{(1)},...,b_{n_3+1}^{(1)},n'_1,n'_2,n'_3)$ be the constant guaranteed to exist by Proposition \hyperlink{prop:unbalancedB}{C}. Combining \eqref{eq:BoundsForGlobalPhi} and \eqref{eq:LowerBoundForPhi_1}, we obtain 
\begin{equation}\label{eq:LowerBoundForPhi_1PartII}
\frac{\log(q_{n_3}^{\beta_1})}{15}<\Phi_1.
\end{equation}
We claim that there exits $t_1\in \N$ with $t_1>(n_3+1)$, $q^{\beta_1}_{t_1}>(q^{\beta_1}_{n_3})^2$ and
\begin{equation}\label{eq:BaseCaseNearLog}
q^{\beta_1}_{t_1}\in [\frac{1}{2}e^{60\Phi_1},2e^{60\Phi_1}].
\end{equation}
Indeed, since $4<q^{\beta_1}_{n_3}$, 
$$q^{\beta_1}_{n_3+1}=3q^{\beta_1}_{n_3}+q^{\beta_1}_{n_3-1}<4q^{\beta_1}_{n_3}<(q^{\beta_1}_{n_3})^2.$$
Thus, by \eqref{eq:LowerBoundForPhi_1PartII},
$$\log(2q_{n_3+1}^{\beta_1})<\log(2(q^{\beta_1}_{n_3})^2)<2\log((q^{\beta_1}_{n_3})^2)<4\log(q^{\beta_1}_{n_3})<60\Phi_1.$$
It follows that $q_{n_3+1}^{\beta_1},(q_{n_3}^{\beta_1})^2<\frac{1}{2}e^{60\Phi_1}$. Noting that for $t>n_3+1$, $q^{\beta_1}_{t-1}<q^{\beta_1}_t<2q^{\beta_1}_{t-1}$, we see that there exists $t_1>n_3+1$ such that \eqref{eq:BaseCaseNearLog} holds,
or, equivalently, 
\begin{equation}\label{eq:CloseToPhi1}
|\log(q^{\beta_1}_{t_1})-60\Phi_1|<\log(2).
\end{equation}
\noindent {\tiny$\square$} \textit{Condition ($\alpha$.4')}:
Set $a_k=b_k^{(1)}$ for $k\leq t_1$ and  let $a_{t_1+1}$ be the smallest natural number such that 
$\log(q^{\beta_1}_{t_1})/a_{t_1+1}<1$.\\
 \noindent {\tiny$\square$} \textit{Checking conditions ($\alpha$.4')-($\alpha$.7') and ($\beta$.1)-($\beta$.4) for $(a_k)_{k=1}^{t_1+1}$}: Recall that for each $n\in\{1,...,t_k+1\}$, $q_{n+1}=a_{n+1}q_n+q_{n-1}$, where $q_0=1$ and $q_{-1}=0$. Thus, $q_n^{\beta_1}=q_n^{\alpha_1}$ for $n\in\{1,...,n_3\}$ and $q_n=q_n^{\beta_1}$ for $n\in\{1,...,t_1\}$.  It follows that  $\log(q_{t_1})/a_{t_1+1}=\log(q^{\beta_1}_{t_1})/a_{t_1+1}<1$, which implies that ($\alpha$.4') holds. We also have $a_{n_1+1}=a_{n_3+1}=b_{n_3+1}^{(1)}=3$, so ($\alpha$.5') holds. To see that ($\alpha$.6') holds, first note that 
 $$2q_2=2q_{n_1}<q_3+q_2<q_{4}$$
 and, hence, (i) in ($\alpha$.6') holds. By \eqref{eq.a.6'k=3}, (ii) in ($\alpha$.6') also holds. Since $a_5>2$, ($\alpha$.7') holds. By \eqref{eq.a.9'k=1}, ($\beta$.1) holds. By \eqref{eq:LowerBoundForPhi_1}, ($\beta$.2) holds. Condition ($\beta$.3) follows from our choice of $t_1$, our definition of $\Phi_1$, and  \eqref{eq:CloseToPhi1}. Since $N=1$,  ($\beta$.4) holds trivially. \\

\underline{Inductive Step}: Fix now $N\in\N$ and suppose that we have picked  $n_1<\cdots<n_{2N+1}$, $t_1<\cdots<t_N$, and $a_j$ for $j=1,...,t_N+1$ in such a way that conditions ($\alpha$.4')-($\alpha$.7') and ($\beta$.1)-($\beta$.4) are satisfied for $k\leq \N$. We want to find $n_{2N+2}<n_{2N+3}$, $t_{N+1}$, and $a_j$, $j=t_{N}+2,...,t_{N+1}+1$, such that conditions $(\alpha$.4')-($\alpha$.7') and  $(\beta$.1)-($\beta$.4) hold for $k=N+1$.\\
\noindent {\tiny$\square$} \textit{Conditions ($\alpha$.6') item (i) and ($\beta$.4)}:
For this, let $\delta_{N+1}$ be the irrational number with continued fraction expansion $[0;d_1^{(N+1)},...]$ where $d_k^{(N+1)}=a_k$ for $k\leq t_{N}+1$  and $d_k^{(N+1)}=1$ for $k>t_{N}+1$. Pick $n_{(2N+2)}>t_{N}+1$ large enough to ensure that 
$$\frac{\sum_{s=1}^{2N+1}q^{\delta_{N+1}}_{n_s}}{q^{\delta_{N+1}}_{n_{(2N+2)}}}<\frac{1}{2N+2}.$$
\noindent {\tiny$\square$} \textit{Conditions ($\alpha$.7') and ($\beta$.1)}:
Now let $\alpha_{N+1}$ be the irrational number with continued fraction expansion $[0;a_1^{(N+1)},...]$ where $a_k^{(N+1)}=d_k^{(N+1)}$ for $k\leq n_{(2N+2)}$, $a_k^{(N+1)}=1$ for $k>n_{(2N+2)}+1$, and $a^{(N+1)}_{n_{(2N+2)}+1}>2N+2$ is large enough to ensure that 
\begin{equation}\label{eq:BoundOnLargestEven}
2\sum_{s=1}^{2N+2}q^{\alpha_{N+1}}_{n_s}<q^{\alpha_{N+1}}_{n_{(2N+2)}+1}
\end{equation}
\noindent {\tiny$\square$} \textit{Conditions ($\alpha$.6') item (ii) and ($\beta$.2)}:
 Pick now $n_{(2N+3)}\in 2\N$, $n_{(2N+3)}>n_{(2N+2)}+1$, large enough to ensure that
$$\frac{\sum_{s=1}^{2N+2}q^{\alpha_{N+1}}_{n_s}}{q^{\alpha_{N+1}}_{n_{(2N+3)}}}<\frac{1}{2N+3}$$
 and
\begin{equation}\label{eq:LowerBoundForPhi_m}
\frac{1}{10}-\frac{18q^{\alpha_{N+1}}_{n_{(2N+2)}+1}}{\log(q^{\alpha_{N+1}}_{n_{(2N+3)}})}>\frac{1}{15}.
\end{equation}
\noindent {\tiny$\square$} \textit{Condition ($\alpha$.5')}:
Let $\beta_{N+1}$ be the irrational number with continued fraction expansion $[0;b^{(N+1)}_1,b^{(N+1)}_2,...]$ where 
$b^{(N+1)}_k=a^{(N+1)}_k$ if $k\neq n_{2N+3}+1$ and $b_{n_{(2N+3)}+1}^{(N+1)}=3$.\\
\noindent {\tiny$\square$} \textit{Condition ($\beta$.3)}:
Let $(n'_k)_{k\in\N}$ be an increasing sequence in $2\N$ with $n'_k=n_k$ for $k=1,...,n_{(2N+3)}$. Since $b^{(N+1)}_k=a^{(N+1)}_k$ for $k\leq n_{2N+3}$, condition ($\beta$.1) and \eqref{eq:BoundOnLargestEven} imply that for 
 every even $r\in\{1,...,2N+3\}$, $2\sum_{s=1}^rq_{n_s}^{\beta_{N+1}}<q_{n_r+1}^{\beta_{N+1}}$. Thus, by condition ($\alpha$.5') and $b^{(N+1)}_{n_{(2N+3)}+1}=3$, for every odd $r\in\{1,...,2N+3\}$, $$2\sum_{s=1}^rq_{n_s}^{\beta_{N+1}}=2\sum_{s=0}^rq_{n_s}^{\beta_{N+1}}=2\sum_{s=0}^{r-1}q_{n_s}^{\beta_{N+1}}+2q_{n_r}^{\beta_{N+1}}\leq q_{n_{(r-1)}+1}^{\beta_{N+1}}+2q_{n_r}^{\beta_{N+1}}<q_{n_r+1}^{\beta_{N+1}},$$
 where $q_{n_0}^{\beta_{N+1}}=q_{n_0+1}^{\beta_{N+1}}=0$.
It follows that the hypothesis of Proposition \hyperlink{prop:unbalancedB}{C} holds with $m=2N+3$  and $(b_n)_{n\in\N}=(b^{(N+1)}_k)_{k\in\N}$. Let $\Phi_{N+1}=\Phi_{(2N+3)}(b^{(N+1)}_1,...,b^{(N+1)}_{n_{(2N+3)}+1}, n'_1,...,n'_{(2N+3)})$ be the constant guaranteed to exist by Proposition \hyperlink{prop:unbalancedB}{C}. Arguing as before, we can find a $t_{N+1}>n_{(2N+3)}+1$ such that $q_{t_{(N+1)}}^{\beta_{N+1}}>(q_{n_{(2N+3)}}^{\beta_{N+1}})^2$ and 
$$|\log(q^{\beta_{N+1}}_{t_{(N+1)}})-60\Phi_{N+1}|<\log(2).$$
\noindent {\tiny$\square$} \textit{Condition ($\alpha$.4')}:
Set $a_k=b_k^{(N+1)}$ for $k\leq t_{N+1}$ and
let $a_{t_{(N+1)}+1}$ be the smallest natural number such that 
$\log(q^{\beta_{N+1}}_{t_{(N+1)}})/a_{t_{(N+1)}+1}<\frac{1}{N+1}$.\\
We now complete the induction by checking that $n_1,...,n_{(2N+3)}$, $t_1$,...,$t_{(N+1)}$, and $a_1$,... ,$a_{t_{(N+1)}+1}$ satisfy conditions ($\alpha$.4')-($\alpha$.7') and ($\beta$.1)-($\beta$.4). The proof of Lemma \ref{lem:ConstructionOfa_kn_kt_k} is thus complete.
\end{proof}
\subsection{Proof of Proposition B}
We can now formulate and prove a more precise statement of Proposition B.
\begin{namedtheorem}{Proposition B$^\prime$}\hypertarget{prop:PropertiesOFAlpha0'}{}
Let the sequences $(n_k)_{k\in\N}$, $(a_k)_{k\in\N}$ and $(t_k)_{k\in\N}$ be given by Lemma \ref{lem:ConstructionOfa_kn_kt_k}. Let $\alpha_0=[0;a_1,...]$ and  $A=\frac{60}{59}$.
There exist a sequence   $(y_k)_{k\in\N}$ in $\mathbb T$,  a sequence $(j_k)_{k\in\N}$ in $\{0,1\}$, such that  conditions ($\alpha$.1)-($\alpha$.8) in Proposition  \hyperlink{prop:PropertiesOFAlpha0}{B} are satisfied.
\end{namedtheorem}
\begin{proof}
We divide the proof of Proposition  \hyperlink{prop:PropertiesOFAlpha0'}{B$^\prime$} into various steps.

\noindent {\tiny$\square$} \textit{Proof of conditions ($\alpha$.4)-($\alpha$.7)}: That conditions 
 ($\alpha$.4)-($\alpha$.7) hold, follows immediately from  ($\alpha$.4')-($\alpha$.7').\\
\noindent {\tiny$\square$} \textit{Proof of condition ($\alpha$.8)}: To see that condition ($\alpha$.8) holds, we will prove that
\begin{equation}\label{eq:Alpha.8Even}
\lim_{k\rightarrow\infty}(q_{n_{2k}}\sum_{s=2k+1}^\infty\|q_{n_s}\alpha_0\|)=0
\end{equation}
and
\begin{equation}\label{eq:Alpha.8Odd}
\lim_{k\rightarrow\infty}(q_{n_{(2k+1)}}\sum_{s=2k+2}^\infty\|q_{n_s}\alpha_0\|)=0.
\end{equation}
By \eqref{eq:BoundOnSumOfClosestIntegers} and ($\alpha$.7), we obtain
\begin{multline*}
\lim_{k\rightarrow\infty}(q_{n_{2k}}\sum_{s=2k+1}^\infty\|q_{n_s}\alpha_0\|)\leq \lim_{k\rightarrow\infty}\frac{2q_{n_{2k}}}{q_{n_{(2k+1)}+1}}\\
\leq \lim_{k\rightarrow\infty} \frac{2q_{n_{2k}}}{q_{n_{2k}+1}}\leq \lim_{k\rightarrow\infty} \frac{2q_{n_{2k}}}{a_{n_{2k}+1}q_{n_{2k}}}=\lim_{k\rightarrow\infty}\frac{2}{a_{n_{2k}+1}}=0,
\end{multline*}
and
\begin{multline}\label{eq:OddToEvenTimeSize}
\lim_{k\rightarrow\infty}(q_{n_{(2k+1)}}\sum_{s=2k+2}^\infty\|q_{n_s}\alpha_0\|)\leq \lim_{k\rightarrow\infty}\frac{2q_{n_{(2k+1)}}}{q_{n_{(2k+2)}+1}}\\
\leq \lim_{k\rightarrow\infty} \frac{2q_{n_{(2k+1)}}}{a_{n_{(2k+2)}+1}q_{n_{(2k+2)}}}
\leq\lim_{k\rightarrow\infty}\frac{2}{a_{n_{(2k+2)}+1}}=0,
\end{multline}
proving that \eqref{eq:Alpha.8Even} and \eqref{eq:Alpha.8Odd} hold.\\

\noindent {\tiny$\square$} \textit{Preamble to the construction of $(y_k)_{k\in\N}$ and $(j_k)_{k\in\N}$}: We now construct the sequences $(y_k)_{k\in\N}$ and $(j_k)_{k\in\N}$. To do this we will first show the following three facts which hold for $m\in\N$ large enough: (a) The set where $T_{\alpha_0}$ coincides with $T_{\alpha_0,2m+1}$ has measure close to 1, (b) The set of points in $U_{\alpha_0,2m+1}$ which are $\|q_{t_m}\alpha_0\|$-away from the discontinuities of $T_{\alpha_0,2m+1}$ is large, and (c) For a majority of $z\in\T\times\Z_2$, $(T_{\alpha_0}^jz)_{j=-1}^{q_{t_m}-1}$ is $\frac{1}{16q_{t_m}}$-away from discontinuities of $T_{\alpha_0}$.\\

\noindent {\tiny$\square$} \textit{For large $m$, the set where $T_{\alpha_0}$ coincides with $T_{\alpha_0,2m+1}$ is large}. Indeed, note that for large enough $m\in\N$,
\begin{multline*}
    \lambda(\{(x,j)\in\T\times\Z_2\,|\,\forall k\in\{0,...,2q_{t_m}-1\},\, T_{\alpha_0,2m+1}^k(x,j)=T_{\alpha_0}^k(x,j)\})\\
    \geq 1-\lambda(\bigcup_{k=0}^{2q_{t_m}-1}\left[[2\sum_{s=1}^{2m+1}q_{n_s}\alpha_0,2\sum_{s=1}^\infty q_{n_s}\alpha_0]-k\alpha_0\right]\times\Z_2)\geq 1-2q_{t_m}\sum_{s=2m+2}^\infty\|q_{n_s}\alpha\|. 
\end{multline*}
Arguing as in \eqref{eq:OddToEvenTimeSize} and noting that, by ($\beta$.4), $t_{m}<n_{(2m+2)}$ for each $m\in\N$, we obtain 
\begin{multline*}
    \lim_{k\rightarrow\infty}(q_{t_{k}}\sum_{s=2k+2}^\infty\|q_{n_s}\alpha_0\|)\leq \lim_{k\rightarrow\infty}\frac{2q_{t_{k}}}{q_{n_{(2k+2)}+1}}\\
\leq \lim_{k\rightarrow\infty} \frac{2q_{n_{(2k+2)}}}{a_{n_{(2k+2)}+1}q_{n_{(2k+2)}}}
=\lim_{k\rightarrow\infty}\frac{2}{a_{n_{(2k+2)}+1}}=0.
\end{multline*}
So,
\begin{equation}\label{eq:AssymptoticT_mApproxToT}
\lim_{m\rightarrow\infty}\lambda(\{(x,j)\in\T\times\Z_2\,|\,\forall k\in\{0,...,2q_{t_m}-1\},\, T_{\alpha_0,2m+1}^k(x,j)=T_{\alpha_0}^k(x,j)\})=1.
\end{equation}
\noindent {\tiny$\square$} \textit{For large $m$, the set of points in $U_{\alpha_0,2m+1}$ which are $\|q_{t_m}\alpha_0\|$-away from the discontinuities of $T_{\alpha_0,2m+1}$ is large}:
Let $m\in\N$. Observe that, by Lemma \ref{lem:BasicFactsAboutUalphaTalpha} items (ii) and (iii), the set $U_{\alpha_0,2m+1}$ has at most $N_m=2\sum_{s=1}^{2m}q_{n_s}+2q_{n_{(2m+1)}}$ components and that for every $x\in\mathbb T$, there exists exactly one  $j\in\{0,1\}$ with $(x,j)\in U_{\alpha_0,2m+1}$.
Furthermore, the discontinuities of $\chi_{U_{\alpha_0,2m+1}}$ can only occur at points of the form $(k\alpha_0,j)$ where $k\in\{0,...,N_m-1\}$ and $j\in\Z_2$. Let $c^{(m)}_1,...,c^{(m)}_{N_m}\in [0,1)$, $0=c^{(m)}_1<c^{(m)}_2<\cdots<c^{(m)}_{N_m}$ be an ordered enumeration of the elements of $\Gamma_m=\{k\alpha_0\mod 1\,|\,k\in\{0,...,N_m-1\}\}$. It follows that for some $j_{m,1},...,j_{m,N_m}\in \Z_2$,
\begin{equation}\label{eq:a.s.DescriptionOFU_mAsIntervals}
\lambda(U_{\alpha_0,2m+1}\triangle\left(\bigcup_{s=1}^{N_m}(c^{(m)}_{s},c^{(m)}_{s+1})\times\{j_{m,s}\}\right))=0,
\end{equation}
where $c^{(m)}_{N_m+1}=1$. Observe that by ($\beta$.1) and ($\alpha$.5'), $N_m<3q_{n_{(2m+1)}}<q_{n_{(2m+1)}+1}$ and, by  ($\beta$.3), $q_{n_{(2m+1)}}^2<q_{t_m}$. Thus, since $12< q_6\leq q_{n_{(2m+1)}}$,
\begin{multline*}
2\|q_{t_m}\alpha_0\|<\frac{2}{q_{t_m+1}}<\frac{2}{q_{n_{(2m+1)}}^2}<\frac{1}{6q_{n_{(2m+1)}}}\\
<\frac{1}{q_{n_{(2m+1)}+1}+q_{n_{(2m+1)}}}<\|q_{n_{(2m+1)}}\alpha_0\|\leq |c^{(m)}_{s+1}-c^{(m)}_s|,
\end{multline*}
for each $s\in\{1,...,N_m\}$. It follows that for each $s\in\{1,...,N_m\}$,
$$(c^{(m)}_s+\|q_{t_m}\alpha_0\|,c^{(m)}_{s+1}-\|q_{t_m}\alpha_0\|)$$
is well-defined and, hence, 
\begin{multline}\label{eq:LongComponentOfU_m}
\liminf_{m\rightarrow\infty}\lambda(\bigcup_{s=1}^{N_m}\left((c^{(m)}_{s}+\|q_{t_m}\alpha_0\|,c^{(m)}_{s+1}-\|q_{t_m}\alpha_0\|)\times\{j_{m,s}\}\right))\\
\geq \lim_{m\rightarrow\infty}(\frac{1}{2}-\frac{2N_m}{q_{t_m+1}})
\geq \lim_{m\rightarrow\infty}(\frac{1}{2}-\frac{6q_{n_{(2m+1)}}}{q_{n_{(2m+1)}}^2})=\frac{1}{2}.
\end{multline}
\noindent {\tiny$\square$} \textit{For large $m$ and  a majority of $z\in\T\times\Z_2$, $(T_{\alpha_0}^jz)_{j=-1}^{q_{t_m}-1}$ is $\frac{1}{16q_{t_m}}$-away form discontinuities of $T_{\alpha_0}$}:
For each $m\in\N$, let
\begin{multline}\label{eq:DefnOmega_m}
\Omega_m=\{(x,j)\in\T\times\Z_2\,|\\
\exists k\in\{0,...,q_{t_m}\},\, \|x+(k-1)\alpha_0-x_1\|,\|x+(k-1)\alpha_{0}-x_0|\|< \frac{1}{16q_{t_m}}\},
\end{multline}
 where $x_0$ and $x_1$ are as defined in \eqref{eq:discontinuitiesOfT}. Clearly, 
\begin{equation}\label{eq:AwayFromDiscontinuitiesInB'}
\limsup_{k\rightarrow\infty}\lambda(\Omega_k)\leq \limsup_{m\rightarrow\infty}\frac{q_{t_k}+1}{4q_{t_k}}= \frac{1}{4}.
\end{equation}
\noindent {\tiny$\square$} \textit{Choosing $(y_k)_{k\in\N}$ and $(j_k)_{k\in\N}$}: 
By \eqref{eq:AssymptoticT_mApproxToT}, \eqref{eq:a.s.DescriptionOFU_mAsIntervals}, \eqref{eq:LongComponentOfU_m}, and \eqref{eq:AwayFromDiscontinuitiesInB'}, there exists $m_0\in\N$ such that for any $m\in\N$ with $m>m_0$, we can find $(y_m,j_m)\in \T\times\Z_2$ and $\ell=\ell_m\in\{1,...,N_m\}$ such that  
\begin{equation}\label{eq:EnsureReturnToSameComponent}
(y_m,j_m)\in (c^{(m)}_\ell+\|q_{t_m}\alpha_0\|,c^{(m)}_{\ell+1}-\|q_{t_m}\alpha_0\|)\times \{j_{m,\ell}\}\subseteq U_{\alpha_0,2m+1},
\end{equation}
 for every $k\in\{0,...,2q_{t_m}-1\}$, $T^k_{\alpha_0}(y_m,j_m)=T^k_{\alpha_0,2m+1}(y_m,j_m)$, and $(y_m,j_m)\in \Omega_m^c$. For every $m\leq m_0$, we set 
$(y_m,j_m)=(0,0)$. \\
 \noindent {\tiny$\square$} \textit{Proof of condition ($\alpha$.1)}: By conditions ($\alpha$.5') 
 and ($\beta$.1) one has that for any $k\in\N$, $a_{n_{(2k+1)}+1}=3$ and for any $r\in\{1,...,2k+1\}$, $2\sum_{s=1}^rq_{n_s}<q_{n_r+1}$. Thus, when one replaces $(b_n)_{n\in\N}$ with $(a_n)_{n\in\N}$ in the statement of Proposition \hyperlink{prop:unbalancedB}{C}, both conditions (a) and (b) are satisfied for any number of the form $m=2k+1$, $k\in\N$. Furthermore, by 
  ($\beta$.3),  $q_{t_k}>q_{n_{(2k+1)}}^2$ for every $k\in\N$ and, by our choice of $((y_k,j_k))_{k\in\N}$, we have that for $k>m_0$, $(y_k,j_k)\in U_{\alpha_0,2k+1}$ and
 $$\min\{\|y_k+s\alpha_0\|\,|\,s\in\{0,...,q_{t_k}-1\}\}\geq \frac{1}{16q_{t_k}}.$$
Thus, letting $A=\frac{60}{59}$, we obtain from Proposition  \hyperlink{prop:unbalancedB}{C} and condition ($\beta$.3) that for $k>m_0$,
\begin{multline*}
\left| S_{q_{t_k}}(T_{{\alpha_0},2k+1},h_1')(y_k,j_k)+A^{-1}q_{t_k}\log(q_{t_k})\right|\\
=\left| S_{q_{t_k}}(T_{{\alpha_0},2k+1},h_1')(y_k,j_k)+q_{t_k}(1-\frac{1}{60})\log(q_{t_k})\right|\\
\leq \left| S_{q_{t_k}}(T_{{\alpha_0},2k+1},h_1')(y_k,j_k)+q_{t_k}\log(q_{t_k})-q_{t_k}\Phi_k\right|+q_{t_k}|\frac{\log(q_{t_k})}{60}-\Phi_k|\\
<q_{t_k}(155+\frac{\log(2)}{60}). 
\end{multline*}
So, there exits $B>1$ such that for $k>m_0$, 
$$\left|S_{q_{t_k}}(T_{{\alpha_0},2k+1},h_A')(y_k,j_k)+q_{t_k}\log(q_{t_k})\right|<Bq_{t_k}.$$
Condition (${\alpha}$.1) now follows from $T^s_{\alpha_0}(y_k,j_k)=T^s_{\alpha_0,2k+1}(y_k,j_k)$ for every $s\in\{0,...,2q_{t_k}-1\}$.\\
 \noindent {\tiny$\square$} \textit{Proof of condition ($\alpha$.2)}: Substituting  $x_0=-\alpha_0$ and $x_1=-\alpha_0+|J_{\alpha_0}(n_k)|$ in \eqref{eq:DefnOmega_m}, we see that  ($\alpha$.2) holds.\\
 \noindent {\tiny$\square$} \textit{Proof of condition ($\alpha$.3)}:
Note that  for any $k>m_0$, there exists a $j'_k\in\Z_2$ with  
$$(y_k+q_{t_k}\alpha_0,j'_k)=T^{q_{t_k}}_{\alpha_0}(y_k,j_k)=T^{q_{t_k}}_{\alpha_0,2k+1}(y_k,j_k).$$
Since, by (i) in Lemma \ref{lem:BasicFactsAboutUalphaTalpha} and Remark \ref{rem:HierarchyOFConditions}, $U_{\alpha_0,2k+1}$ is $T_{\alpha_0,2k+1}$-invariant and $(y_k,j_k)\in U_{\alpha_0,2k+1}$, we obtain that 
$$(y_k+q_{t_k}\alpha_0,j'_k)\in  U_{\alpha_0,2k+1}.$$ 
So, by \eqref{eq:EnsureReturnToSameComponent}, $(y_k+q_{t_k}\alpha_0,j'_k)$ and $(y_k,j_k)$ lie in the interior of the same component of $U_{\alpha_0,2k+1}$ and, hence, $j'_k=j_k$, proving that ($\alpha$.3) holds. Proposition B is thus proved. \end{proof}

\section{Proof of Proposition C}\label{sec:proofOFpropC}
In this section we prove Proposition \hyperlink{prop:unbalancedB}{C}. Let $\alpha\in (0,1)$ be an irrational number, let $(n_k)_{k\in\N}$ be an increasing sequence in $2\N$,  and let  $T_{\alpha,s}$ be defined as in \eqref{eq:SthSkew}. Recall that  $U_{\alpha,0}=\mathbb T\times\{0\}$, $V_{\alpha,0}=\mathbb T\times\{1\}$, $T_{\alpha,0}(x,j)=(x+\alpha,j)$, and for any $m\in\N$, $J'_{\alpha,m}$, $U_{\alpha,m}$, and $V_{\alpha,m}$ are defined as in \eqref{eq:DefnJ'_m}, \eqref{eq:DefnU_m}, and \eqref{eq:DefnV_m}.\\

For $m\in\N$ define $\phi_{\alpha,m}:\T\rightarrow\T$ by 
\begin{equation}\label{eq:DefnFunctionPhi}
\phi_{\alpha,m}(x)=\frac{\chi_{[1/2,1)}(x)-\chi_{(0,1/2)}(x)}{\|x\|}\chi_{U_{\alpha,m}}(x,1),
\end{equation}
The following proposition reduces the study of the Birkhoff sums of 
$h'_1$ (see \eqref{eq:h})  over $T_{\alpha,m}$ to those of $\phi_{\alpha,m}$ over $R_{\alpha}$.
\begin{proposition}\label{prop:h_1=phi}
  For any irrational $\alpha\in (0,1)$, any increasing sequence $(n_k)_{k\in\N}$ in $2\N$, any $m\in\N$, and any $(x,j)\in U_{\alpha,m}$ with $x\neq 0$, we have
  \begin{equation}\label{eq:h_1=phi}
h_1'(x,j)=\phi_{\alpha,m}(x).
\end{equation}
  Furthermore, if $U_{\alpha,m}$ is $T_{\alpha,m}$-invariant,  $n\in\N$ and $h_1'$ is defined at each of  $(x+k\alpha\mod 1,j)$, $k=0,...,n-1$ and $j\in\{0,1\}$, then
\begin{equation}\label{eq:EquivalentErgSums}
S_{n}(T_{\alpha,m},h_1')(x,j)=\sum_{j=0}^{n-1} h_1'(T_{\alpha,m}^j(x,j))=\sum_{j=0}^{n-1}\phi_{\alpha,m}(R_{\alpha}^jx)=S_{n}(R_{\alpha},\phi_{\alpha,m})(x).
\end{equation}  
\end{proposition}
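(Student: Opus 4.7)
The strategy is a direct case analysis using the explicit formula for $h_1'$ from the remark together with property (iii) of Lemma~\ref{lem:BasicFactsAboutUalphaTalpha}. Recall that $h_1'(x,j)=\chi_{\{1\}}(j)\frac{\chi_{[1/2,1)}(x)-\chi_{(0,1/2)}(x)}{\|x\|}$, and note that the projection of $T_{\alpha,m}$ to the first coordinate is simply $R_\alpha$.

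To prove \eqref{eq:h_1=phi}, fix $(x,j)\in U_{\alpha,m}$ with $x\neq 0$ and split on $j$. If $j=1$, then $(x,1)\in U_{\alpha,m}$, so $\chi_{U_{\alpha,m}}(x,1)=1$ and both $h_1'(x,1)$ and $\phi_{\alpha,m}(x)$ equal $\frac{\chi_{[1/2,1)}(x)-\chi_{(0,1/2)}(x)}{\|x\|}$. If $j=0$, then $h_1'(x,0)=0$ by definition; on the other hand, item (iii) of Lemma~\ref{lem:BasicFactsAboutUalphaTalpha} (the involution $(x,0)\in U_{\alpha,m}\iff(x,1)\in V_{\alpha,m}$) combined with the fact that $U_{\alpha,m}$ and $V_{\alpha,m}$ partition $\T\times\Z_2$ forces $(x,1)\notin U_{\alpha,m}$, hence $\chi_{U_{\alpha,m}}(x,1)=0$ and $\phi_{\alpha,m}(x)=0$.

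For \eqref{eq:EquivalentErgSums}, relabel the base point as $(x,j_0)\in U_{\alpha,m}$ (to avoid conflict with the summation index). Since $U_{\alpha,m}$ is $T_{\alpha,m}$-invariant by assumption, each iterate $T_{\alpha,m}^{k}(x,j_0)$ lies in $U_{\alpha,m}$. Because $T_{\alpha,m}$ projects to $R_\alpha$ on the first factor, we may write $T_{\alpha,m}^{k}(x,j_0)=(R_\alpha^{k}x,j_k)$ for some $j_k\in\Z_2$, with $(R_\alpha^{k}x,j_k)\in U_{\alpha,m}$. The hypothesis that $h_1'$ is defined at every $(R_\alpha^{k}x,j)$ ensures in particular that $R_\alpha^{k}x\neq 0$, so \eqref{eq:h_1=phi} applies pointwise and yields
\[
h_1'\bigl(T_{\alpha,m}^{k}(x,j_0)\bigr)=\phi_{\alpha,m}(R_\alpha^{k}x)
\]
for each $k\in\{0,\dots,n-1\}$. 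Summing over $k$ gives \eqref{eq:EquivalentErgSums}.

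There is no serious obstacle: the proof is bookkeeping. The only subtle point is the $j=0$ case of \eqref{eq:h_1=phi}, where the vanishing of $\phi_{\alpha,m}$ must be extracted from the symmetry encoded in Lemma~\ref{lem:BasicFactsAboutUalphaTalpha}(iii), and it is precisely this symmetry that makes the reduction from the $\Z_2$-skew product to the base rotation work.
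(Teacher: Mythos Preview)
Your proof is correct and matches the paper's argument essentially line for line: both split on $j\in\{0,1\}$, invoke Lemma~\ref{lem:BasicFactsAboutUalphaTalpha}(iii) to force $\chi_{U_{\alpha,m}}(x,1)=0$ when $(x,0)\in U_{\alpha,m}$, and then use the $T_{\alpha,m}$-invariance of $U_{\alpha,m}$ to apply \eqref{eq:h_1=phi} along the orbit for \eqref{eq:EquivalentErgSums}.
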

\begin{proof}
 By item (iii) in Lemma \ref{lem:BasicFactsAboutUalphaTalpha}, for every $y\in\T$ there exists a unique $i\in\Z_2$ with $(y,i)\in U_{\alpha,m}$. Thus, for any  $(x,j)\in U_{\alpha,m}$, with $x\not\in \{0,1/2\}$, we have that when $j=0$,
 \begin{multline*}
 h'_1(x,j)=\chi_{\{1\}}(j)\frac{\chi_{[1/2,1)}(x)-\chi_{(0,1/2)}(x)}{\|x\|}=0\\
 =\frac{\chi_{[1/2,1)}(x)-\chi_{(0,1/2)}(x)}{\|x\|}\chi_{U_{\alpha,m}}(x,1)=\phi_{\alpha,m}(x)
 \end{multline*}
 and when $j=1$,
  \begin{multline*}
 h'_1(x,j)=\chi_{\{1\}}(j)\frac{\chi_{[1/2,1)}(x)-\chi_{(0,1/2)}(x)}{\|x\|}=\frac{\chi_{[1/2,1)}(x)-\chi_{(0,1/2)}(x)}{\|x\|}\\
 =\frac{\chi_{[1/2,1)}(x)-\chi_{(0,1/2)}(x)}{\|x\|}\chi_{U_{\alpha,m}}(x,1)=\phi_{\alpha,m}(x)
 \end{multline*}
 That \eqref{eq:EquivalentErgSums} holds follows from \eqref{eq:h_1=phi} and the fact that $U_{\alpha,m}$ is $T_{\alpha,m}$-invariant. 
\end{proof}
The following two lemmas will be needed for the proof of Proposition \hyperlink{prop:unbalancedB}{C}. 
\begin{lemma}\label{lemProp27ForSingleAlpha}
Let $\alpha\in (0,1)$ be an irrational number, let  $(n_k)_{k\in\N}$ be an increasing sequence in $2\N$, and let $m,M>1$. Suppose that (a) $3\leq a_{n_m+1}\leq M$ and (b) $2\sum_{s=1}^{r}q_{n_s}<q_{n_{r}+1}$ for each $r\in\{1,...,m\}$. Then, 
for any $n\in\N$ with $n>n_m+1$ and any  $(x,j)\in U_{\alpha,m}$ with 
$$\min\{\|x+k\alpha\|\,|\,k\in\{0,...,q_{n}-1\}\}\geq \frac{1}{16q_n},$$
one has, 
\begin{equation}\label{eqQuadraticBoundOFErgodicSum}
\left| S_{q_n}(T_{\alpha,m},h_1')(x,j)+q_n\log(q_n)-q_n\Phi_{\alpha,m}\right|\leq 43q_n+64q_{n_m}^2,
\end{equation}
where
\begin{equation}\label{eq:DefnConstantPhiAlpha,m}
    \Phi_{\alpha,m}=\int_\T \left(\phi_{\alpha,m}(x)+\frac{\chi_{[0,\|q_{n_{(m-1)}}\alpha\|)}(x)}{\|x\|}\right)\text{d}\lambda'(x)-\log(\|q_{n_{(m-1)}}\alpha\|).
\end{equation}
 satisfies 
\begin{equation}\label{eqBoundsForPhialpham}
\frac{\log(q_{n_m})}{2(M+2)}-18q_{n_{(m-1)}+1}\leq  \Phi_{\alpha,m}\\
\leq 4\frac{\log(q_{n_m})}{M}+18q_{n_{(m-1)}+1}.
\end{equation}
\end{lemma}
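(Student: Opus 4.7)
The plan is to apply Proposition \ref{prop:h_1=phi}, whose hypotheses are met thanks to Lemma \ref{lem:BasicFactsAboutUalphaTalpha}(i) and Remark \ref{rem:HierarchyOFConditions}, to reduce the bound on $S_{q_n}(T_{\alpha,m},h_1')(x,j)$ to a bound on the rotation sum $S_{q_n}(R_\alpha,\phi_{\alpha,m})(x)$. I would then decompose
$$\phi_{\alpha,m}(x) = \psi_{\alpha,m}(x) - \frac{\chi_{[0,\|q_{n_{(m-1)}}\alpha\|)}(x)}{\|x\|},$$
where $\psi_{\alpha,m} := \phi_{\alpha,m} + \chi_{[0,\|q_{n_{(m-1)}}\alpha\|)}/\|\cdot\|$. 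The rationale is that Lemma \ref{lem:BasicFactsAboutUalphaTalpha}(iv) guarantees $\phi_{\alpha,m}(x) = -1/x$ on the interval $(0,\|q_{n_m}\alpha\|)$ to the right of $0$, and $\phi_{\alpha,m}(x) = 0$ immediately to the left, so adding back the positive truncated singular term cancels the leading one-sided singularity and leaves $\psi_{\alpha,m}$ well behaved near $0$. Its remaining jumps occur at the $O(q_{n_m})$ discontinuities of $\chi_{U_{\alpha,m}}$, located at the points $k\alpha\mod 1$ for $k\in\{0,\dots,2\sum_{s=1}^m q_{n_s}-1\}$.

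For the singular piece, I would adapt the proof of Lemma~4.3 in~\cite{FaKa} used for \eqref{eq:L4.3''}, applied to the truncated interval $[0,\|q_{n_{(m-1)}}\alpha\|)$, to obtain, using the hypothesis $\min_k\|x+k\alpha\|\ge 1/(16q_n)$,
$$S_{q_n}\!\left(R_\alpha,\frac{\chi_{[0,\|q_{n_{(m-1)}}\alpha\|)}}{\|\cdot\|}\right)\!(x) = q_n\log q_n + q_n\log\|q_{n_{(m-1)}}\alpha\| + O(q_n).$$
For $\psi_{\alpha,m}$, I would apply Denjoy--Koksma: bounding each jump by $2/\|k\alpha\|$ and summing via the three-distance theorem yields $\mathrm{Var}(\psi_{\alpha,m}) = O(q_{n_m}^2)$, hence
$$\Big|S_{q_n}(R_\alpha,\psi_{\alpha,m})(x) - q_n\!\!\int_\T\!\psi_{\alpha,m}\,d\lambda'\Big| \le \mathrm{Var}(\psi_{\alpha,m}) = O(q_{n_m}^2).$$
Combining these two estimates and recognizing that $\int_\T\psi_{\alpha,m}\,d\lambda' - \log\|q_{n_{(m-1)}}\alpha\| = \Phi_{\alpha,m}$, one recovers inequality \eqref{eqQuadraticBoundOFErgodicSum}; the explicit constants $43$ and $64$ then emerge from a careful accounting of the implicit ones.

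Proving the bounds \eqref{eqBoundsForPhialpham} on $\Phi_{\alpha,m}$ constitutes a separate geometric analysis. Using the recursion \eqref{eq:SymDiffOfU_m} to write $U_{\alpha,m}\triangle U_{\alpha,m-1} = \bigcup_{i=0}^{q_{n_m}-1} T_{\alpha,m-1}^i(J'_{\alpha,m})$, I would decompose $\int_\T\psi_{\alpha,m}\,d\lambda'$ into a dominant contribution from the $m$-th renormalization step and a smaller contribution from previous scales. The $m$-th step produces a logarithmic quantity of order $\log(q_{n_m})/a_{n_m+1}$, because the $q_{n_m}$ copies of intervals of length $\|q_{n_m}\alpha\|\sim 1/(a_{n_m+1}q_{n_m})$ produce asymmetric signed contributions that sum to $\sim \log(q_{n_m}/q_{n_{(m-1)}+1})/a_{n_m+1}$. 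The hypothesis $a_{n_m+1}\ge 3$ ensures the positivity of this leading term and hence the lower bound, while $a_{n_m+1}\le M$ yields the upper one; the error $\pm 18 q_{n_{(m-1)}+1}$ absorbs contributions from scales larger than $\|q_{n_{(m-1)}}\alpha\|$.

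The main obstacle will be this last step: carrying out the geometric count that extracts the precise dependence on $a_{n_m+1}$ in the bounds on $\Phi_{\alpha,m}$. One must track, level by level, how each flip $T_{\alpha,r-1}^i(J'_{\alpha,r})$ intersects the portion of $U_{\alpha,r-1}\cap(\T\times\{1\})$ lying near $0$, and keep the signs and sizes aligned so that the asymmetric logarithmic contribution survives with the correct constants rather than cancelling against contributions from earlier renormalization levels.
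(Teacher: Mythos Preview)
Your approach is correct and follows essentially the same line as the paper's. Both reduce to rotation sums via Proposition~\ref{prop:h_1=phi}, split off the one-sided singular term $\chi_{[0,\|q_{n_{(m-1)}}\alpha\|)}/\|\cdot\|$ (the paper calls this interval $F_m$), handle it by truncation at $1/(16q_n)$ plus Denjoy--Koksma, and control the remainder $\psi_{\alpha,m}$ by a variation bound and Denjoy--Koksma.

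The difference is one of granularity. Where you treat $\psi_{\alpha,m}$ as a single function and estimate its variation globally by summing $1/\|k\alpha\|$ over the discontinuities of $\chi_{U_{\alpha,m}}$, the paper (Sublemma~\ref{lem:ExpresionForh'}) decomposes $\psi_{\alpha,m}$ further into five pieces $\chi_{E_m}/\|x\|,\ -\chi_{A_m}/\|x\|,\ -2\chi_{C_m}/\|x\|,\ \chi_{B_m}/\|x\|,\ -2\chi_{D_m}/\|x\|$, where $E_m=\pi((U_{\alpha,m}\triangle U_{\alpha,m-1})\cap(\T\times\{1\}))$ carries the new $m$-th-level structure and $A_m,B_m,C_m,D_m$ are supported in $[\|q_{n_{(m-1)}}\alpha\|,1-\|q_{n_{(m-1)}}\alpha\|]$. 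This finer split is what lets the paper (Sublemma~\ref{lem:BoundForTotalVarOfBoundedSets}) isolate the $q_{n_m+1}$ contribution in $\mathrm{Var}(\chi_{E_m}/\|x\|)$ and absorb it into $q_n$ rather than into an $M$-dependent $O(q_{n_m}^2)$; this is how the explicit constants $43$ and $64$ arise. More importantly, the same decomposition is exactly what makes the bounds \eqref{eqBoundsForPhialpham} on $\Phi_{\alpha,m}$ tractable: the integrals of the four ``bounded'' pieces are each at most $1/\|q_{n_{(m-1)}}\alpha\|\le 2q_{n_{(m-1)}+1}$, producing the $\pm 18q_{n_{(m-1)}+1}$ error, while $\int_{E_m}1/\|x\|$ alone gives the $\log(q_{n_m})/a_{n_m+1}$ main term (the paper's V3). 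So the ``main obstacle'' you flag is resolved not by a level-by-level tracking of flips, but by this single observation that $E_m$ isolates the $m$-th renormalization layer while everything else sits uniformly away from the singularity.
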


Let $(b_n)_{n=1}^\infty$ be a sequence in $\N$. Recall that for any $\ell\in\N$, 
$$\cA_\ell(b_1,...,b_\ell)=\cA_\ell:=\{\alpha\in [0,1)\;:\; a^\alpha_i=b_i, \text{ for every } i\in\{1,...,\ell\}\}.
$$
\begin{lemma}\label{lem:BoundForDifferentAlphas}
Let $(b_n)_{n\in\N}$ be a sequence in $\N$ and let $m\in\N$ be such that $m>1$. Form the sequence $(q_n)_{n\in\N}$ defined recursively by 
$$q_{n+1}=b_{n+1}q_n+q_{n-1},$$
where $q_{-1}=0$, and $q_0=1$. Let $(n_k)_{k\in\N}$ be an increasing sequence in $2\N$ such that (a) For some $M\in\N$, $3\leq b_{n_m+1}\leq M$ and (b) $2\sum_{s=1}^{r}q_{n_s}<q_{n_{r}+1}$ for each $r\in\{1,...,m\}$. For any $\ell>n_m$ with $q_\ell>q_{n_m}^2$ and any $\alpha,\beta\in \cA_\ell(b_1,...,b_\ell)$,
\begin{equation}\label{eq:DiscrepancyOfPhis}
|\Phi_{\alpha,m}-\Phi_{\beta,m}|<12(M+1).
\end{equation}
\end{lemma}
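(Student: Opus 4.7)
The plan is to exploit that $\alpha,\beta\in\cA_\ell$ share the first $\ell$ partial quotients, so that the orbit segments entering the construction of $U_{\alpha,m}$ and $U_{\beta,m}$ align combinatorially. Since $\ell>n_m$ one has $q_\ell\ge q_{n_m+1}$, and hypothesis (b) gives $N_m:=2\sum_{s=1}^m q_{n_s}<q_{n_m+1}\le q_\ell$. Applying Proposition \ref{prop:DistributionOfIrrationalsWithSimilarContinuedFraction} with $n=\ell$ to both $\alpha$ and $\beta$, for every $k\in\{1,\ldots,N_m-1\}$ the residues $k\alpha\bmod 1$ and $k\beta\bmod 1$ lie in a common subinterval $(c_k/q_\ell,(c_k+1)/q_\ell)$ with distinct $c_k$; in particular $|k\alpha-k\beta|<1/q_\ell$ and the cyclic orderings of the two orbit segments coincide. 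A straightforward induction on $r\le m$ using the recursions \eqref{eq:DefnU_m}--\eqref{eq:DefnV_m} then shows that $U_{\alpha,m}$ and $U_{\beta,m}$ share an identical combinatorial structure: their interval components are indexed by the same pairs of orbit indices and carry the same $U/V$ labels. The key observation is that the $\Z_2$-valued cocycle of $T_{\alpha,r-1}$, when evaluated at orbit points, reduces to ordering comparisons against $N_{r-1}\alpha\bmod 1$, itself an orbit point of index $<q_\ell$, so the common ordering forces identical decisions for $\alpha$ and $\beta$.

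I would then split
\[
\Phi_{\alpha,m}-\Phi_{\beta,m}=\int_\T(\psi_\alpha-\psi_\beta)\,\text{d}\lambda'-(\log a-\log b),
\]
where $a=\|q_{n_{(m-1)}}\alpha\|$, $b=\|q_{n_{(m-1)}}\beta\|$, and $\psi_\gamma=\phi_{\gamma,m}+\chi_{[0,\|q_{n_{(m-1)}}\gamma\|)}/\|x\|$. For the logarithmic term, Proposition \ref{prop:DistributionOfIrrationalsWithSimilarContinuedFraction} applied at $k=q_{n_{(m-1)}}<q_\ell$ gives $|a-b|<1/q_\ell$; combined with $a,b\in\bigl(1/(q_{n_{(m-1)}}+q_{n_{(m-1)}+1}),\,1/q_{n_{(m-1)}+1}\bigr)$ and $q_\ell>q_{n_m}^2\ge q_{n_{(m-1)}+1}^2$, this yields $|\log a-\log b|=O(1/q_{n_{(m-1)}+1})$, a small constant. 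For the integral term, Lemma \ref{lem:BasicFactsAboutUalphaTalpha}(iv) ensures $(0,\|q_{n_m}\alpha\|)\times\{1\}\subset U_{\alpha,m}$, so $\phi_{\alpha,m}(x)=-1/x$ there and the correction $\chi_{[0,a)}/\|x\|$ cancels exactly, leaving $\psi_\alpha$ supported on $[\|q_{n_m}\alpha\|,1-\|q_{n_m}\alpha\|]$ with $|\psi_\alpha|\le 1/\|x\|$.

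The main obstacle will be estimating $\int_\T(\psi_\alpha-\psi_\beta)\,\text{d}\lambda'$. Using the common interval decomposition, this integral is a sum over components of contributions from $1/q_\ell$-endpoint shifts; on each component the difference is bounded by such a shift times $\max|\psi|$, which is at most $1/d_s$ where $d_s$ is the distance from the component to $\{0,1\}$. The interval $[\min(a,b),\max(a,b)]$, on which the correction $\chi_{[0,\cdot)}/\|x\|$ disagrees, contributes at most $|a-b|/\|q_{n_m}\alpha\|\le q_{n_m+1}/q_\ell$. Summing the per-component bounds reduces the problem to estimating $\sum_{k=1}^{N_m-1}1/\|k\alpha\|$: via the three-distance theorem together with $N_m<q_{n_m+1}\le(M+1)q_{n_m}+q_{n_m-1}$ (using hypotheses (a) and (b)), this sum is of order $q_{n_m+1}\log q_{n_m}$, and dividing by $q_\ell>q_{n_m}^2$ yields a contribution of order $(M+1)\log(q_{n_m})/q_{n_m}$, which is uniformly bounded. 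Combining these three pieces yields the claimed estimate $|\Phi_{\alpha,m}-\Phi_{\beta,m}|<12(M+1)$.
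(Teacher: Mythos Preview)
Your outline shares with the paper the essential mechanism---using Proposition~\ref{prop:DistributionOfIrrationalsWithSimilarContinuedFraction} and Lemma~\ref{lem:BasicFactsAboutUalphaTalpha} to show that $U_{\alpha,m}$ and $U_{\beta,m}$ have matching combinatorics with endpoints shifted by at most $1/q_\ell$---but you overlook an algebraic cancellation that makes the paper's argument considerably shorter. With $a=\|q_{n_{(m-1)}}\alpha\|$ and $b=\|q_{n_{(m-1)}}\beta\|$ (both $<1/2$),
\[
\int_\T\Bigl(\frac{\chi_{[0,a)}(x)}{\|x\|}-\frac{\chi_{[0,b)}(x)}{\|x\|}\Bigr)\,\text{d}\lambda'(x)\;=\;\log a-\log b,
\]
so the correction integrals and the $-(\log a-\log b)$ term cancel \emph{exactly}, and one is left with bounding $\bigl|\int_\T(\phi_{\alpha,m}-\phi_{\beta,m})\,\text{d}\lambda'\bigr|$ alone. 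There is no need to estimate $|\log a-\log b|$ or to introduce the auxiliary $\psi_\gamma$.

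After this reduction, the paper uses a cruder but cleaner estimate than your per-endpoint sum. By Lemma~\ref{lem:BasicFactsAboutUalphaTalpha}(iv), $\chi_{U_{\alpha,m}}=\chi_{U_{\beta,m}}$ on $\bigl((0,\|q_{n_m}\alpha\|)\cup(1-\|q_{n_m}\alpha\|,1)\bigr)\times\Z_2$, so on the support of $\phi_{\alpha,m}-\phi_{\beta,m}$ one has $1/\|x\|\le 1/\|q_{n_m}\alpha\|<2q_{n_m+1}$. The support itself is contained in $U_{\alpha,m}\triangle U_{\beta,m}$, whose measure the paper bounds by $N_m/q_\ell\le 3q_{n_m}/q_\ell$ via a discontinuity-counting argument: the discontinuities of $\chi_{U_{\alpha,m}}$ and $\chi_{U_{\beta,m}}$ lie in the same $1/q_\ell$-grid cells (Lemma~\ref{lem:BasicFactsAboutUalphaTalpha}(ii) plus Proposition~\ref{prop:DistributionOfIrrationalsWithSimilarContinuedFraction}), and the two indicators agree near $0$, hence agree on every grid cell not containing a discontinuity. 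Multiplying, $2\cdot 2q_{n_m+1}\cdot 3q_{n_m}/q_\ell<12(M+1)$ using $q_{n_m+1}\le(M+1)q_{n_m}$ and $q_\ell>q_{n_m}^2$.

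Your endpoint sum $\tfrac{1}{q_\ell}\sum_{k=1}^{N_m-1}1/\|k\alpha\|$ would also work---indeed the trivial bound $1/\|k\alpha\|\le 2q_{n_m+1}$ for $k<q_{n_m+1}$ recovers exactly the paper's product---but your sharper claim that the sum is of order $q_{n_m+1}\log q_{n_m}$ is neither justified here nor needed, and tracking the precise constant $12(M+1)$ through that route would be more delicate. In short: your strategy is viable, but the paper's route via the exact cancellation and the $\sup\times\text{measure}$ bound is both simpler and what delivers the stated constant directly.
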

\begin{remark}\label{rem:q_ell>q_nmImplies_ellLarge}
We remark that when $b_{n_m+1}=3$ and $m>1$, the condition $q_\ell>q_{n_m}^2$ in Proposition \hyperlink{prop:unbalancedB}{C} and Lemma \ref{lem:BoundForDifferentAlphas} implies that $\ell>n_m+1$. Indeed, since $n_m$ is even and $m>1$, we must have that $n_m\geq 4$ and, hence, $q_{n_m}\geq 5$. So, since $b_{n_m+1}=3$, 
$$q_{n_m+1}=3q_{n_m}+q_{n_m-1}<4q_{n_m}<q_{n_m}^2<q_\ell.$$
Noting that $(q_n)_{n\in\N}$ is an increasing sequence, we conclude that $\ell>n_{m}+1$. 
\end{remark}

We will prove the above lemmas in the subsections below, let us first show how they imply Proposition \hyperlink{prop:unbalancedB}{C}.

\begin{proof}[{\bf Proof of Proposition \hyperlink{prop:unbalancedB}{C}}]
Let 
$$\ell_0=\min\{\ell\in\N\,|\,q_\ell>q_{n_m}^2\},$$
let $\alpha\in \cA_{\ell_0}(b_1,...,b_{\ell_0})$, and set $\Phi=\Phi_{\alpha,m}$, where $\Phi_{\alpha,m}$ is defined in  \eqref{eq:DefnConstantPhiAlpha,m}. It follows from \eqref{eqBoundsForPhialpham} with $M=3$ that $\Phi$ satisfies \eqref{eq:BoundsForGlobalPhi}.\\
Now let $\ell\in\N$ be such that $q_\ell>q_{n_m}^2$ and pick $\beta\in \cA_\ell(b_1,...,b_\ell)$. Since $\ell\geq \ell_0$, we have that $\beta \in \cA_{\ell_0}(b_1,...,b_{\ell_0})$. 
It now follows from Lemma \ref{lemProp27ForSingleAlpha} and Remark \ref{rem:q_ell>q_nmImplies_ellLarge} that for any $n\geq \ell_0>n_m+1$ and any $(x,j)\in U_{\beta,m}$ with
$$\min\{\|x+k\beta\|\,|\,k\in\{0,...,q_{n}^\beta-1\}\}\geq \frac{1}{16q^\beta_n},$$
one has, 
\begin{multline*}
\left| S_{q^\beta_n}(T_{\beta,m},h_1')(x,j)+q^\beta_n\log(q_n^\beta)-q^\beta_n\Phi\right|=\left| S_{q^\beta_n}(T_{\beta,m},h_1')(x,j)+q^\beta_n\log(q_n^\beta)-q^\beta_n\Phi_{\alpha,m}\right|\\
\leq \left| S_{q^\beta_n}(T_{\beta,m},h_1')(x,j)+q^\beta_n\log(q_n^\beta)-q^\beta_n\Phi_{\beta,m}\right|+q^\beta_n\left|\Phi_{\alpha,m}-\Phi_{\beta,m}\right|<107q^\beta_n+48q^\beta_n, 
\end{multline*}
where in the last inequality we used \eqref{eq:DiscrepancyOfPhis}.
We are done. 
\end{proof}
\subsection{Proofs of Lemmas  \ref{lemProp27ForSingleAlpha} and \ref{lem:BoundForDifferentAlphas}}
Denote by $\pi$ the canonical projection from $\T\times\Z_2$ to $\T$ (so, $\pi(x,j)=x$). We will find it convenient to  express $\phi_{\alpha,m}$ with the help of the following sets:
\begin{equation}\label{eq:defnAm}
A_m=\pi\big(U_{\alpha,(m-1)}\cap ([\|q_{n_{(m-1)}}\alpha\|,1/2)\times\{1\})\big),
\end{equation}
\begin{equation}\label{eq:defnBm}
B_m=\pi\big(U_{\alpha,(m-1)}\cap ([1/2,1-\|q_{n_{(m-1)}}\alpha\|)\times\{1\})\big),
\end{equation}
\begin{equation}\label{eq:defnCm}
C_m=\pi\big(U_{\alpha,m}\setminus U_{\alpha,(m-1)}\cap ([\|q_{n_{(m-1)}}\alpha\|,1/2)\times\{1\}) \big),
\end{equation}
\begin{equation}\label{eq:defnDm}
D_m=\pi\big(U_{\alpha,(m-1)}\setminus U_{\alpha,m}\cap ([1/2,1-\|q_{n_{(m-1)}}\alpha\|)\times\{1\})\big),
\end{equation}
\begin{equation}\label{eq:defnEm}
E_m=\pi\big((U_{\alpha,m}\triangle U_{\alpha,(m-1)})\cap (\mathbb T\times\{1\})\big).
\end{equation}
and 
\begin{equation}\label{eq:defnFm}
F_m=[0,\|q_{n_{(m-1)}}\alpha\|).
\end{equation}
\begin{sublemma}\label{lem:ExpresionForh'}
Let $m\in\N$ be such that $m>1$ and let $\alpha\in(0,1)$ be  an irrational number. Suppose that  $2\sum_{s=1}^{r}q_{n_s}<q_{n_{r}+1}$ for each $r\in\{1,...,m\}$. 
Then 
\begin{equation}\label{eq:DecompositionOFphi'}
    \phi_{\alpha,m}(x)=-\frac{\chi_{F_m}}{\|x\|}+
    \frac{\chi_{E_m}}{\|x\|}-\frac{\chi_{A_m}}{\|x\|}- 
    \frac{2\chi_{C_m}}{\|x\|}
    +\frac{\chi_{B_m}}{\|x\|}-\frac{2\chi_{D_m}}{\|x\|},
\end{equation} 
at every $x\in\T\setminus\{0\}$.
\end{sublemma}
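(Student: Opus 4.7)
The plan is to establish the identity pointwise by partitioning $\mathbb T\setminus\{0\}$ into four natural regions, and then, in each region, doing a short case analysis on the membership of $(x,1)$ in $U_{\alpha,m-1}$ and in $U_{\alpha,m}$. Before splitting into cases, note that by Lemma \ref{lem:BasicFactsAboutUalphaTalpha}(iv) applied at level $m-1$ (whose hypothesis $2\sum_{s=1}^{r}q_{n_s}<q_{n_r+1}$ for $r\le m-1$ is implied by our assumption), one has $(x,1)\in U_{\alpha,m-1}$ for all $x\in F_m\setminus\{0\}$; and item (iv) applied to the coordinate $0$ together with item (iii) yields $(x,1)\notin U_{\alpha,m-1}$ for $x\in[1-\|q_{n_{(m-1)}}\alpha\|,1)$, since the corresponding point $(x,0)$ already lies in $U_{\alpha,m-1}$ and (iii) swaps sheets.

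First I partition $\mathbb T\setminus\{0\}$ into
\[
R_1=F_m\setminus\{0\},\quad R_2=[\|q_{n_{(m-1)}}\alpha\|,1/2),\quad R_3=[1/2,1-\|q_{n_{(m-1)}}\alpha\|),\quad R_4=[1-\|q_{n_{(m-1)}}\alpha\|,1).
\]
On $R_1\cup R_2$ we have $\|x\|=x$ and $\phi_{\alpha,m}(x)=-\chi_{U_{\alpha,m}}(x,1)/x$, while on $R_3\cup R_4$ we have $\|x\|=1-x$ and $\phi_{\alpha,m}(x)=\chi_{U_{\alpha,m}}(x,1)/(1-x)$. Since each of $A_m,B_m,C_m,D_m,F_m$ is contained in exactly one of the four $R_i$, only a handful of indicator terms on the right-hand side of \eqref{eq:DecompositionOFphi'} contribute in any given region.

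Next I verify the identity region by region. On $R_2$, the only possibly nonzero terms on the right are those indexed by $A_m,C_m,E_m$; a four-case table on $\bigl(\chi_{U_{\alpha,m-1}}(x,1),\chi_{U_{\alpha,m}}(x,1)\bigr)\in\{0,1\}^2$ matches $-\chi_{U_{\alpha,m}}(x,1)/x$ with $(\chi_{E_m}-\chi_{A_m}-2\chi_{C_m})/x$ in every case. On $R_3$, the only possibly nonzero terms are those indexed by $B_m,D_m,E_m$, and the symmetric four-case table matches $\chi_{U_{\alpha,m}}(x,1)/(1-x)$ against $(\chi_{E_m}+\chi_{B_m}-2\chi_{D_m})/(1-x)$. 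On $R_1$, the preparatory observation forces $\chi_{U_{\alpha,m-1}}(x,1)=1$, so $x\in E_m$ iff $(x,1)\notin U_{\alpha,m}$; the only nonzero right-hand terms are $-\chi_{F_m}/x$ and $\chi_{E_m}/x$, and the two subcases on membership of $(x,1)$ in $U_{\alpha,m}$ give the desired equality. On $R_4$, the preparatory observation instead forces $\chi_{U_{\alpha,m-1}}(x,1)=0$, so the only nonzero right-hand term is $\chi_{E_m}/(1-x)$, and again the two subcases on $(x,1)\in U_{\alpha,m}$ close the argument.

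I expect no real obstacle: the proof is pure bookkeeping once the regions are set up, and the only structural input beyond the definitions of $A_m,\ldots,F_m$ is the use of items (iii) and (iv) of Lemma \ref{lem:BasicFactsAboutUalphaTalpha} to pin down $\chi_{U_{\alpha,m-1}}(x,1)$ on the two small neighborhoods of $0$ encoded by $R_1$ and $R_4$. The potentially delicate point, worth being explicit about when writing up, is the half-open convention at $x=1/2$ (which places $1/2\in R_3$ and excludes it from $R_2$, consistent with the half-open intervals defining $A_m$ and $B_m$) and the treatment of boundary points such as $x=\|q_{n_{(m-1)}}\alpha\|$ and $x=1-\|q_{n_{(m-1)}}\alpha\|$, which are handled by the same half-open convention built into the sets $F_m$, $A_m$, $B_m$.
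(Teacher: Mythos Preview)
Your proposal is correct and follows essentially the same approach as the paper: the paper also reduces to showing the numerator identity, partitions $\T\setminus\{0\}$ into the same four intervals $[0,\|q_{n_{(m-1)}}\alpha\|)$, $[\|q_{n_{(m-1)}}\alpha\|,1/2)$, $[1/2,1-\|q_{n_{(m-1)}}\alpha\|)$, $[1-\|q_{n_{(m-1)}}\alpha\|,1)$, uses item~(iv) of Lemma~\ref{lem:BasicFactsAboutUalphaTalpha} (together with item~(iii) for the right neighborhood of $0$) to pin down $\chi_{U_{\alpha,m-1}}(x,1)$ on the two boundary regions, and then checks the identity region by region. The only cosmetic difference is that the paper packages your four-case tables on $R_2$ and $R_3$ as an algebraic manipulation via the identity $\chi_{U_{\alpha,(m-1)}\setminus U_{\alpha,m}}+\chi_{U_{\alpha,m}\setminus U_{\alpha,(m-1)}}=\chi_{U_{\alpha,m}\triangle U_{\alpha,(m-1)}}$, which amounts to the same bookkeeping.
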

While the decomposition of $\phi_{\alpha,m}$ in \eqref{eq:DecompositionOFphi'} does not correspond to any partition of $[0,1)$, the ergodic sum of each of its summands can either be estimated with enough  precision or shown to have a "small" contribution to the total ergodic sum of $\phi_{\alpha,m}$. As a matter of fact, the main contributors to the ergodic sum of $\phi_{\alpha,m}$ are the terms $-\frac{\chi_{F_m}}{\|x\|}$ and $\frac{\chi_{E_m}}{\|x\|}$. The following diagram illustrates why this is the case. 
 \begin{figure}[H]
 \centering
\resizebox{!}{6.5cm} {\includegraphics{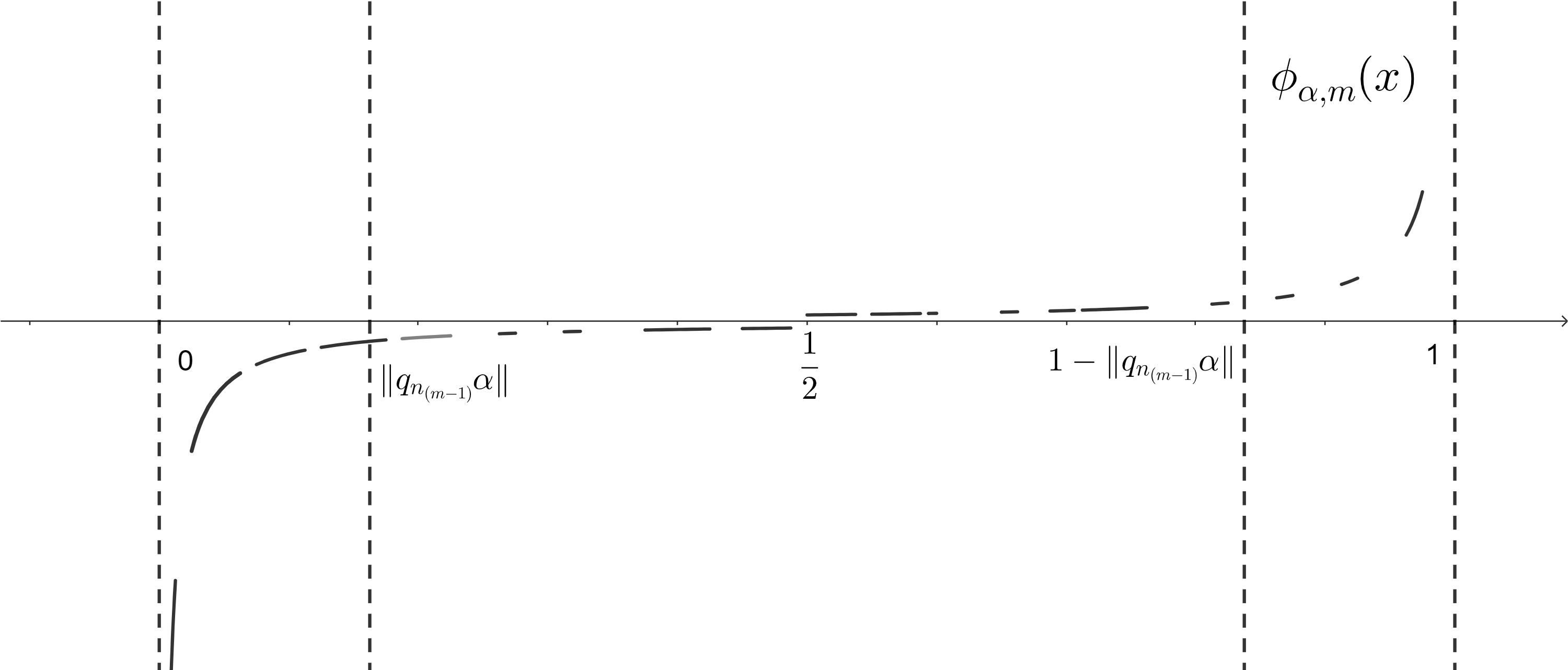}}
\caption{\small The diagram above provides a visual representation of the graph of $\phi_{\alpha,m}$ on the interval $[0,1)$. Note that the portion of the graph of $\phi_{\alpha,m}$ on $[0,\|q_{n_{(m-1)}}\alpha\|)$ is unbounded and the restriction of $\phi_{\alpha,m}$ to this interval coincides with that of $-\frac{\chi_{F_m}}{\|x\|}+\frac{\chi_{E_m}}{\|x\|}$. The restriction of $\phi_{\alpha,m}$ to $[1-\|q_{n_{(m-1)}}\alpha\|,1)$ is, so to say, "nearly" unbounded and it coincides with the restriction of $\frac{\chi_{E_m}}{\|x\|}$ to  $[1-\|q_{n_{(m-1)}}\alpha\|,1)$. Finally, the restriction  of $\phi_{\alpha,m}$ to  $[\|q_{n_{(m-1)}}\alpha\|,1-\|q_{n_{(m-1)}}\alpha\|)$ can be bounded by a relatively small constant  and coincides with the restriction of $\frac{\chi_{E_m}}{\|x\|}-\frac{\chi_{A_m}}{\|x\|}- 
    \frac{2\chi_{C_m}}{\|x\|}
    +\frac{\chi_{B_m}}{\|x\|}-\frac{2\chi_{D_m}}{\|x\|}$ to the same interval. 
}
\end{figure}

We will also need the following bounds on various quantities involving the sets $A_m$--$E_m$.
\begin{sublemma}\label{lem:BoundForTotalVarOfBoundedSets}
Let $m,M\in\N$ be such that $m>1$ and $M\geq 3$ and let $\alpha\in(0,1)$ be an irrational number. Suppose that (a) $a_{n_m+1}= M$ and (b) $2\sum_{s=1}^{r}q_{n_s}<q_{n_{r}+1}$ for each $r\in\{1,...,m\}$. Then
\begin{enumerate}
\item[V1.]\vspace{-1em}
$$\text{Var}(\chi_{A_m}\frac{1}{\|x\|}),\text{Var}(\chi_{B_m}\frac{1}{\|x\|}), \text{Var}(\chi_{C_m}\frac{1}{\|x\|}), \text{Var}(\chi_{D_m}\frac{1}{\|x\|})<8q_{n_m}q_{n_{(m-1)}+1};$$
\item[V2.] \vspace{-1em}
$$\text{Var}(\chi_{E_m}\frac{1}{\|x\|})< 8(q_{n_m+1}+q_{n_m}+q_{n_m}\log(q_{n_m}));$$
\item [V3.]\vspace{-1em}
$$\frac{\log(q_{n_m})}{2(M+2)}\leq \int_{E_m}\frac{1}{\|x\|}\text{d}\lambda'(x)\leq 2+4\frac{1+\log(q_{n_m})}{M}.$$\end{enumerate}
\end{sublemma}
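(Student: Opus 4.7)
The plan is to first decompose each of $A_m, B_m, C_m, D_m, E_m$ into a finite disjoint union of intervals and carefully count these. By Lemma \ref{lem:BasicFactsAboutUalphaTalpha}(ii), all discontinuities of $\chi_{U_{\alpha,(m-1)}}$ lie in $\Delta_{m-1}$, and hypothesis (b) bounds $|\Delta_{m-1}|$ by $2\sum_{s=1}^{m-1}q_{n_s} < q_{n_{(m-1)}+1}$; thus $A_m$ and $B_m$ are unions of at most $q_{n_{(m-1)}+1}$ intervals, both contained in $[\|q_{n_{(m-1)}}\alpha\|,\,1-\|q_{n_{(m-1)}}\alpha\|]$. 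For $C_m$ and $D_m$, I will invoke identity \eqref{eq:SymDiffOfU_m}, which presents $U_{\alpha,m}\triangle U_{\alpha,(m-1)}$ as a union of $q_{n_m}$ translates of $J'_{\alpha,m}$; after intersecting with the same bounded-away-from-$0$ region, $C_m$ and $D_m$ are unions of at most $q_{n_m}+2$ intervals. The set $E_m$ arises from the same symmetric difference restricted to the fiber $\{1\}$, yielding at most $q_{n_m}$ intervals of length exactly $\|q_{n_m}\alpha\|$.

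Given these descriptions, V1 follows from the product rule $\text{Var}(\chi_S g)\leq 2N\|g\|_\infty + \text{Var}(g|_S)$ applied to $g(x)=1/\|x\|$ with $S\in\{A_m,B_m,C_m,D_m\}$ and $N$ equal to the number of intervals making up $S$. On each such $S$, one has $\|g\|_\infty\leq 2q_{n_{(m-1)}+1}$ and $\text{Var}(g|_S)$ of the same order, giving $\leq 4q_{n_{(m-1)}+1}^2 + O(q_{n_{(m-1)}+1})$ for $A_m, B_m$ and $\leq 4q_{n_m}q_{n_{(m-1)}+1} + O(q_{n_{(m-1)}+1})$ for $C_m, D_m$. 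Since $n_m\geq n_{(m-1)}+2$ forces $q_{n_{(m-1)}+1}\leq q_{n_m}$, both estimates collapse to the required $\leq 8q_{n_m}q_{n_{(m-1)}+1}$.

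For V2 and V3 the crucial input is the three-distance theorem for the orbit $\{i\alpha\mod 1 : 0\leq i<q_{n_m}\}$: the left endpoints of the $E_m$-intervals form a translate of this orbit, so consecutive sorted endpoints on $\mathbb{T}$ differ by at least $\|q_{n_m-1}\alpha\|\geq 1/(2q_{n_m})$, while the closest endpoint to $0$ is at distance at least $\|q_{n_m}\alpha\|\geq 1/(2q_{n_m+1})$ (the interval $(0,\|q_{n_m}\alpha\|)\times\{1\}$ is excluded from $E_m$ since it lies in $U_{\alpha,(m-1)}\cap U_{\alpha,m}$ by Lemma \ref{lem:BasicFactsAboutUalphaTalpha}(iv)). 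For V2, indexing the $E_m$-intervals by increasing distance to $0$ and using that the $l$th-closest satisfies $\min_{I_l}\|x\|\gtrsim\max\{\|q_{n_m}\alpha\|,\,(l-1)\|q_{n_m-1}\alpha\|\}$, the total variation bound $\text{Var}(\chi_{E_m}/\|x\|)\leq 2\sum_l 1/\min_{I_l}\|x\|$ splits into a single \emph{singular} term of order $q_{n_m+1}$ and a harmonic tail of order $q_{n_m}\log q_{n_m}$, which is exactly V2. For V3, the same indexing sandwiches $\int_{I_l}\|x\|^{-1}\,\text{d}\lambda'$ between $\|q_{n_m}\alpha\|/\max_{I_l}\|x\|$ and $\|q_{n_m}\alpha\|/\min_{I_l}\|x\|$; combining with the identity $q_{n_m}\|q_{n_m}\alpha\|\asymp 1/M$ (which follows from $a_{n_m+1}=M$) and summing against $\sum 1/l\sim\log q_{n_m}$ produces matching upper and lower bounds of the stated form. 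The most delicate step will be the lower bound in V3: here I must exploit both the minimum and maximum gap sizes from the three-distance theorem to certify that a positive fraction of the $q_{n_m}$ intervals sits at distance at most $O(l/q_{n_m})$ from $0$ (for $l=1,\ldots,\Theta(q_{n_m})$), so that a genuine $\log q_{n_m}/M$ factor survives even after accounting for the missing interval near the origin.
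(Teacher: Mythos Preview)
Your approach to V2 and V3 is essentially the paper's: write $E_m$ as $q_{n_m}$ intervals of length $\|q_{n_m}\alpha\|$ whose endpoints lie at distance at least $\|q_{n_m}\alpha\|$ from $0$ and are mutually $\|q_{n_m-1}\alpha\|$-separated, then split the resulting sums into one term of order $q_{n_m+1}$ plus a harmonic tail of order $q_{n_m}\log q_{n_m}$. For the lower bound in V3 the paper makes your sketch precise by observing that consecutive right endpoints $\rho_k$ differ by at most $\|q_{n_m}\alpha\|+\|q_{n_m-1}\alpha\|$ and that $\rho_1=2\|q_{n_m}\alpha\|$, so $\rho_k\leq 2k\|q_{n_m-1}\alpha\|$; together with $\|q_{n_m}\alpha\|/\|q_{n_m-1}\alpha\|>1/(M+2)$ and $\sum_{k=1}^{q_{n_m}}1/k>\log q_{n_m}$ this gives exactly $\log(q_{n_m})/(2(M+2))$.

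For V1 there is a concrete gap in your interval count for $C_m$ and $D_m$. You claim $C_m,D_m$ are unions of at most $q_{n_m}+2$ intervals, obtained from the $q_{n_m}$ translates in \eqref{eq:SymDiffOfU_m} after intersecting with $[\|q_{n_{(m-1)}}\alpha\|,1/2)$ or its mirror. But $C_m$ is the projection of $(U_{\alpha,m}\setminus U_{\alpha,(m-1)})\cap(\cdots\times\{1\})$, not of the full symmetric difference: by \eqref{eq:DefnU_m} you must additionally intersect with $V_{\alpha,(m-1)}$, whose indicator has up to $2\sum_{s=1}^{m-1}q_{n_s}$ discontinuities on the fiber $\{1\}$, each potentially splitting one of your $q_{n_m}$ intervals. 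The analogous issue arises for $D_m$ via $U_{\alpha,(m-1)}$. The paper avoids this case split entirely and treats $A_m,B_m,C_m,D_m$ uniformly: the endpoints of any of them lie in $\Gamma=\{k\alpha\bmod 1:0\leq k<2\sum_{s=1}^m q_{n_s}\}\cup\{1/2,\,1-\|q_{n_{(m-1)}}\alpha\|\}$, and hypothesis (b) gives $|\Gamma|<4q_{n_m}$, hence fewer than $2q_{n_m}$ components for each set. Combined with $\sup_F 1/\|x\|<2q_{n_{(m-1)}+1}$ this yields $\text{Var}<8q_{n_m}q_{n_{(m-1)}+1}$ in one stroke. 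Your argument is repairable with the corrected count $q_{n_m}+q_{n_{(m-1)}+1}+2<2q_{n_m}$, but as written the $q_{n_m}+2$ figure is wrong.
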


We will now use Sublemmas \ref{lem:ExpresionForh'} and \ref{lem:BoundForTotalVarOfBoundedSets}  to prove Lemmas  \ref{lemProp27ForSingleAlpha} and \ref{lem:BoundForDifferentAlphas}. We will prove Sublemma \ref{lem:ExpresionForh'} in Subsection \ref{sec:exfor} and Sublemma \ref{lem:BoundForTotalVarOfBoundedSets} in Subsection \ref{sec:lse}. The proof of Lemma  \ref{lemProp27ForSingleAlpha} is carried out in the next two subsubsections, and the proof of Lemma \ref{lem:BoundForDifferentAlphas} will be given in Subsection \ref{subsec713}.
\subsubsection{Proof of \eqref{eqQuadraticBoundOFErgodicSum} in Lemma \ref{lemProp27ForSingleAlpha}}
\begin{proof}
Let $(x,j)\in U_{\alpha,m}$ be such that $\|x+k\alpha\|\geq \frac{1}{16q_n}$ for each $k\in\{0,...,q_n-1\}$. By \eqref{eq:EquivalentErgSums} and Sublemma \ref{lem:ExpresionForh'}, 
\begin{multline*}
S_{q_n}(T_{\alpha,m},h_1')(x,j)=S_{q_n}(R_\alpha,\phi_{\alpha,m})(x)\\
=-S_{q_n}(R_\alpha,\frac{\chi_{F_m}}{\|\cdot\|})(x)+
    S_{q_n}(R_\alpha,\frac{\chi_{E_m}}{\|\cdot\|})(x)-S_{q_n}(R_\alpha,\frac{\chi_{A_m}}{\|\cdot\|})(x)\\
    -
    2S_{q_n}(R_\alpha,\frac{\chi_{C_m}}{\|\cdot\|})(x)
    +S_{q_n}(R_\alpha,\frac{\chi_{B_m}}{\|\cdot\|})(x)-2S_{q_n}(R_\alpha,\frac{\chi_{D_m}}{\|\cdot\|})(x),
\end{multline*}
Note that since $\|x+k\alpha\|\geq \frac{1}{16q_n}$  for each $k\in\{0,...,q_n-1\}$,
$$S_{q_n}(R_\alpha,\frac{\chi_{F_m}}{\|\cdot\|})(x)=S_{q_n}(R_\alpha,\frac{\chi_{[\frac{1}{16q_n},\|q_{n_{(m-1)}}\alpha\|)}}{\|\cdot\|})(x).$$
By Denjoy-Koksma inequality,
\begin{multline}\label{eq:DenjoyKoksma1ForS'}
\left|S_{q_n}\left(R_{\alpha},\chi_{[\frac{1}{16q_n},\|q_{n_{(m-1)}}\alpha\|)}\frac{-1}{\|x\|}\right)(x)+q_n(\log(q_n)+\log(16)+\log(\|q_{n_{(m-1)}}\alpha\|))\right|\\
=\left|S_{q_n}\left(R_{\alpha},\chi_{[\frac{1}{16q_n},\|q_{n_{(m-1)}}\alpha\|)}\frac{-1}{\|x\|}\right)(x)-q_n\int_{1/(16q_n)}^{\|q_{n_{(m-1)}}\alpha\|}\frac{-1}{\|y\|}\text{d}\lambda'(y)\right|\\
\leq \text{Var}(\chi_{[\frac{1}{16q_n},\|q_{n_{(m-1)}}\alpha\|]}\frac{1}{\|x\|})\leq 32q_n,
\end{multline}
Moreover, for $F\in\{A_m,B_m,C_m,D_m\}$,  V1. in Sublemma \ref{lem:BoundForTotalVarOfBoundedSets} implies 
\begin{equation}\label{eq:DenjoyKoksma2ForS'}
    \left|S_{q_n}\left(R_{\alpha},\frac{\chi_{F}}{\|x\|}\right)(x)- q_n\int_{F}\frac{1}{\|y\|}\text{d}\lambda'(y)\right|\leq \text{Var}(\chi_{F}\frac{1}{\|x\|})\leq  8q_{n_m}q_{n_{(m-1)}+1}\leq 8q_{n_m}^2
\end{equation}
and V2. in Sublemma \ref{lem:BoundForTotalVarOfBoundedSets} implies that
\begin{multline}\label{eq:DenjoyKoksma3ForS'}
 \left|S_{q_n}\left(R_{\alpha},\frac{\chi_{E_m}}{\|x\|}\right)(x)- q_n\int_{E_m}\frac{1}{\|y\|}\text{d}\lambda'(y)\right|\\
 \leq \text{Var}(\chi_{E_m}\frac{1}{\|x\|})\leq  8(q_{n_m+1}+q_{n_m}+q_{n_m}\log(q_{n_m}))\leq 8q_n+16q_{n_m}^2.
\end{multline}
Combining \eqref{eq:DefnConstantPhiAlpha,m}, \eqref{eq:DenjoyKoksma1ForS'}, \eqref{eq:DenjoyKoksma2ForS'}, and \eqref{eq:DenjoyKoksma3ForS'},  we obtain
\begin{multline*}
    \left|S_{q_n}(T_{\alpha,m},h_1')(x,j)+q_n\log(q_n)-q_n\Phi_{\alpha,m}\right|=\left|S_{q_n}(R_{\alpha},\phi_{\alpha,m})(x)+q_n\log(q_n)-q_n\Phi_{\alpha,m}\right|\\
\leq \left|S_{q_n}(R_{\alpha},\frac{-\chi_{[\frac{1}{16q_n},\|q_{n_{(m-     1)}}\alpha\|)}}{\|\cdot\|})(x)+q_n(\log(q_n)+\log(16)+\log(\|q_{n_{(m-1)}}\alpha\|))\right|+q_n\log(16)\\
+\left|S_{q_n}(R_{\alpha},\phi_{\alpha,m}+\frac{\chi_{[\frac{1}{16q_n},\|q_{n_{(m-1)}}\alpha\|)}}{\|\cdot\|})(x)-q_n\int_\T \left(\phi_{\alpha,m}(x)+\frac{\chi_{[0,\|q_{n_{(m-1)}}\alpha\|)}(x)}{\|x\|}\right)\text{d}\lambda'(x)\right|\\
    \leq  32q_n+\log(16)q_n+48q_{n_m}^2+8q_n+16q_{n_m}^2< 43q_n+64q_{n_m}^2.
\end{multline*}
We are done. 
\end{proof}
\subsubsection{Proof of (\ref{eqBoundsForPhialpham}) in Lemma \ref{lemProp27ForSingleAlpha}}
\begin{proof} Using Sublemma \ref{lem:ExpresionForh'} and the definition of $\Phi_{\alpha,m}$,

\begin{multline*}
   \Phi_{\alpha,m}=\int_\T \left(\phi_{\alpha,m}(x)+\frac{\chi_{[0,\|q_{n_{(m-1)}}\alpha\|)}(x)}{\|x\|}\right)\text{d}\lambda'(x)-\log(\|q_{n_{(m-1)}}\alpha\|)\\    
    =\int_{E_m}\frac{1}{\|x\|}\text{d}\lambda'
    +\int_{A_m}\frac{-1}{\|x\|}\text{d}\lambda'+ 
    \int_{C_m}\frac{-2}{\|x\|}\text{d}\lambda'
    +\int_{B_m}\frac{1}{\|x\|}\text{d}\lambda'+ \int_{D_m}\frac{-2}{\|x\|}\text{d}\lambda'-\log(\|q_{n_{(m-1)}}\alpha\|).
\end{multline*}
 Observe that by definition (see \eqref{eq:defnAm}--\eqref{eq:defnDm}) it follows that for $F\in \{A_m,B_m,C_m,D_m\}$ we have $F\cap [0,\|q_{n_{(m-1)}}\alpha\|)=\emptyset$ and $F\cap (1-\|q_{n_{(m-1)}}\alpha\|,1]=\emptyset$. Therefore 
 $$
 \Big|\int_{F}\frac{1}{\|x\|}\text{d}\lambda'\Big|\leq \frac{1}{\|q_{n_{(m-1)}}\alpha\|}\leq 2q_{n_{(m-1)}+1}.
 $$
 
Thus, by V3 in Sublemma \ref{lem:BoundForTotalVarOfBoundedSets},
\begin{multline*}
    \frac{\log(q_{n_m})}{2(M+2)}-\log(\|q_{n_{(m-1)}}\alpha\|)-12q_{n_{(m-1)}+1}
    \leq  \Phi_{\alpha,m}\\
    \leq 2+4\frac{1+\log(q_{n_m})}{M}-\log(\|q_{n_{(m-1)}}\alpha\|)+12q_{n_{(m-1)}+1}.
\end{multline*}
Since 
$$\log(q_{n_{(m-1)}+1})\leq-\log(\|q_{n_{(m-1)}}\alpha\|)
\leq \log(2q_{n_{(m-1)}+1}),$$
we obtain (\ref{eqBoundsForPhialpham}) and finish the proof of Lemma \ref{lemProp27ForSingleAlpha}.\\

\end{proof}
\subsubsection{Proof of Lemma \ref{lem:BoundForDifferentAlphas}} \label{subsec713}
\begin{proof}
Let $\ell>n_m$ be such that $q_\ell>q_{n_m}^2$ and pick $\alpha, \beta\in \cA_\ell(b_1,...,b_\ell)$. Let $\Phi_{\alpha,m}$, $\Phi_{\beta,m}$ be as defined in \eqref{eq:DefnConstantPhiAlpha,m}. Observe that, since $m>1$, 
$$\|q_{n_{(m-1)}}\alpha\|,\|q_{n_{(m-1)}}\beta\|
<\frac{1}{q_{n_{(m-1)}+1}}\leq \frac{1}{q_3}\leq \frac{1}{3}.$$
Thus, 
\begin{multline*}
\int_\T \left(\frac{\chi_{[0,\|q_{n_{(m-1)}}\alpha\|)}(x)}{\|x\|}-\frac{\chi_{[0,\|q_{n_{(m-1)}}\beta\|)}(x)}{\|x\|}\right)\text{d}\lambda'(x)-\log(\|q_{n_{(m-1)}}\alpha\|)+\log(\|q_{n_{(m-1)}}\beta\|)\\
=\log(\|q_{n_{(m-1)}}\alpha\|)-\log(\|q_{n_{(m-1)}}\beta\|)-\log(\|q_{n_{(m-1)}}\alpha\|)+\log(\|q_{n_{(m-1)}}\beta\|)=0.
\end{multline*}
Therefore, in order to prove \eqref{eq:DiscrepancyOfPhis}, it is enough to show that 
\begin{equation}\label{eq:12(M+2)DiscrepancyBound}
|\int_\T (\phi_{\alpha,m}(x)-\phi_{\beta,m}(x))\text{d}\lambda'|<12(M+1).
\end{equation}
Using \eqref{eq:DefnFunctionPhi} we obtain
\begin{multline*}
\int_\T (\phi_{\alpha,m}(x)-\phi_{\beta,m}(x))\text{d}\lambda'\\
=2\int_{\T\times \{1\}}\frac{\chi_{[1/2,1)}(x)-\chi_{[0,1/2)}(x)}{\|x\|}(\chi_{U_{\alpha,m}}-\chi_{U_{\beta,m}})(x,j)\text{d}\lambda(x,j)\\
=2\int_{\T\times \Z_2}\chi_{\{1\}}(j)\frac{\chi_{[1/2,1)}(x)-\chi_{[0,1/2)}(x)}{\|x\|}(\chi_{U_{\alpha,m}}-\chi_{U_{\beta,m}})(x,j)\text{d}\lambda(x,j).
\end{multline*}
Thus, in order to prove \eqref{eq:12(M+2)DiscrepancyBound} it is enough to show
\begin{equation}\label{eq:maa}
|\int_{\T\times \Z_2}\chi_{\{1\}}(j)\frac{\chi_{[1/2,1)}(x)-\chi_{[0,1/2)}(x)}{\|x\|}(\chi_{U_{\alpha,m}}-\chi_{U_{\beta,m}})(x,j)\text{d}\lambda(x,j)|
<6(M+1),
\end{equation}
To prove \eqref{eq:maa}, we assume that, without loss of generality, $\|q_{n_m}\alpha\|<\|q_{n_m}\beta\|$. Observe that by condition (b) in  Lemma \ref{lem:BoundForDifferentAlphas} and item (iv) in Lemma \ref{lem:BasicFactsAboutUalphaTalpha},
\begin{equation}\label{eq:EqualityNear0}
\chi_{U_{\alpha,m}}=\chi_{U_{\beta,m}}
\end{equation}
on $\left((0,\|q_{n_m}\alpha\|)\cup(1-\|q_{n_m}\alpha\|,1)\right)\times\Z_2$.
Hence, 
\begin{multline*}
    |\int \chi_{\{1\}}(j)\frac{\chi_{[1/2,1)}(x)-\chi_{[0,1/2)}(x)}{\|x\|}(\chi_{U_{\alpha,m}}-\chi_{U_{\beta,m}})(x,j)\text{d}\lambda(x,j)|\\
    \leq \int\chi_{[\|q_{n_m}\alpha\|,1-\|q_{n_m}\alpha\|]}(x)\chi_{\{1\}}(j)\frac{1}{\|x\|}|\chi_{U_{\alpha,m}}-\chi_{U_{\beta,m}}|(x,j)\text{d}\lambda(x,j)\\
    \leq \int 2q_{n_m+1}\chi_{\{1\}}(j)|\chi_{U_{\alpha,m}}-\chi_{U_{\beta,m}}|(x,j)\text{d}\lambda(x,j)\leq 2q_{n_m+1}\lambda(U_{\alpha,m}\triangle U_{\beta,m}).
\end{multline*}
By Proposition  
 \ref{prop:DistributionOfIrrationalsWithSimilarContinuedFraction},
 for each $k\in\{1,...,2\sum_{s=1}^mq_{n_s}-1\}$ there exists a  $c_k\in\{0,...,q_\ell-1\}$ such that 
 $$k\alpha,k\beta\mod 1\in (\frac{c_k}{q_\ell},\frac{c_k+1}{q_\ell}).$$
Thus, by item (ii) in Proposition \ref{lem:BasicFactsAboutUalphaTalpha}, for any $c\in\{1,...,q_\ell-1\}$, the number of discontinuities of $\chi_{U_{\alpha,m}}$ and $\chi_{U_{\beta,m}}$ on the interval $(0,\frac{c}{q_\ell}
)\times\{1\}$ is the same. Similarly, the number of discontinuities of $\chi_{U_{\alpha,m}}$ and $\chi_{U_{\beta,m}}$ on $(0,\frac{c}{q_\ell})\times\{0\}$ also coincide. Thus, when $c\not\in\{c_k\,|\,k\in\{1,...,2\sum_{s=1}^{m}q_{n_s}-1\}\}$, both $\chi_{U_{\alpha,m}}$ and $\chi_{U_{\beta,m}}$ are continuous on $\left(\frac{c}{q_\ell},\frac{c+1}{q_\ell}\right)\times\Z_2$ and, by 
 \eqref{eq:EqualityNear0},
$$\chi_{U_{\alpha,m}}=\chi_{U_{\beta,m}}$$
on $\left(\frac{c}{q_\ell},\frac{c+1}{q_\ell}\right)\times\Z_2$.
It follows that 
$$\lambda(U_{\alpha,m}\triangle U_{\beta,m})\leq \frac{2\sum_{s=1}^mq_{n_s}}{q_\ell}\leq \frac{3q_{n_m}}{q_\ell}.$$
So,
\begin{multline*}
    |\int\chi_{\{1\}}(j)\frac{\chi_{[1/2,1)}(x)-\chi_{[0,1/2)}(x)}{\|x\|}(\chi_{U_{\alpha,m}}-\chi_{U_{\beta,m}})(x,j)\text{d}\lambda(x,j)|\\
    \leq \frac{6q_{n_m+1}q_{n_m}}{q_\ell}\leq \frac{6(M+1)q_{n_m}^2}{q_\ell}< 6(M+1)
\end{multline*}
We are done. 
\end{proof}

\subsection{Proof of Sublemma \ref{lem:ExpresionForh'}}\label{sec:exfor}
\begin{proof}
Let $\psi,\tilde\psi:\mathbb T\rightarrow\mathbb T$ be defined by 
$$\psi(x)=(-\chi_{[0,1/2)}(x)+\chi_{[1/2,1)}(x))\chi_{U_{\alpha,m}}(x,1).$$
and 
$$\tilde \psi(x)=-\chi_{F_m}(x)+
    \chi_{E_m}(x)-\chi_{A_m}(x)- 2\chi_{C_m}(x)
    +\chi_{B_m}(x)-2\chi_{D_m}(x).$$
Note that for each $x\in\T\setminus\{0\}$, we have
$$\phi_{\alpha,m}(x)=\frac{\psi(x)}{\|x\|}.$$
Thus, in order to prove \eqref{eq:DecompositionOFphi'}, it suffices to show that $\psi=\tilde\psi$. 
We will do this by checking that $\psi=\tilde\psi$ on each of the intervals $[0,\|q_{n_{(m-1)}}\alpha\|)$, $[\|q_{n_{(m-1)}}\alpha\|,1/2)$, $[1/2,1-\|q_{n_{(m-1)}}\alpha\|)$, and $[1-\|q_{n_{(m-1)}}\alpha\|,1)$ (note that since $m>1$, $\|q_{n_{(m-1)}}\alpha\|<\frac{1}{2}$). Our main tool will be the identity
\begin{equation*}
\chi_{U_{\alpha,(m-1)}\setminus U_{\alpha,m}}+\chi_{U_{\alpha,m}\setminus U_{\alpha,(m-1)}}=|\chi_{U_{\alpha,(m-1)}\setminus U_{\alpha,m}}-\chi_{U_{\alpha,m}\setminus U_{\alpha,(m-1)}}|=\chi_{U_{\alpha,m}\triangle U_{\alpha,(m-1)}}.
\end{equation*}
\noindent {\tiny$\square$} \textit{The interval $[0,\|q_{n_{(m-1)}}\alpha\|)$}: Let $x\in[0,\|q_{n_{(m-1)}}\alpha\|)$. By item (iv) in Lemma \eqref{lem:BasicFactsAboutUalphaTalpha}, we have
$$1=\chi_{F_m}(x)=\chi_{U_{\alpha,(m-1)}}(x,1)\text{ and }0=\chi_{U_{\alpha,m}\setminus U_{\alpha,(m-1)}}(x,1).$$
Thus,
\begin{multline*}
\tilde\psi(x)=-\chi_{F_m}(x)+\chi_{E_m}(x)
=-\chi_{U_{\alpha,(m-1)}}(x,1)+\chi_{U_{\alpha,m}\triangle U_{\alpha,(m-1)}}(x,1)\\
=-\chi_{U_{\alpha,(m-1)}\setminus U_{\alpha,m}}(x,1)-\chi_{U_{\alpha,(m-1)}\cap U_{\alpha,m}}(x,1)+|\chi_{U_{\alpha,(m-1)}\setminus U_{\alpha,m}}-\chi_{U_{\alpha,m}\setminus U_{\alpha,(m-1)}}|(x,1)\\
=-\chi_{U_{\alpha,(m-1)}\cap U_{\alpha,m}}(x,1)+\chi_{U_{\alpha,m}\setminus U_{\alpha,(m-1)}}(x,1)\\
=-\chi_{U_{\alpha,(m-1)}\cap U_{\alpha,m}}(x,1)-\chi_{U_{\alpha,m}\setminus U_{\alpha,(m-1)}}(x,1)=-\chi_{U_{\alpha,m}}(x,1)=\psi(x).
\end{multline*}
\noindent {\tiny$\square$} \textit{The interval $[\|q_{n_{(m-1)}}\alpha\|,1/2)$}: Let  $x\in [\|q_{n_{(m-1)}}\alpha\|,1/2)$, we have
\begin{multline*}
    \tilde\psi(x)=\chi_{E_m}(x)-\chi_{A_m}(x)-2\chi_{C_m}(x)\\
    =|\chi_{U_{\alpha,(m-1)}\setminus U_{\alpha,m}}-\chi_{U_{\alpha,m}\setminus U_{\alpha,(m-1)}}|(x,1)-\chi_{U_{\alpha,(m-1)}}(x,1)-2\chi_{U_{\alpha,m}\setminus U_{\alpha,(m-1)}}(x,1)\\
    =-\chi_{U_{\alpha,(m-1)}}(x,1)+\chi_{U_{\alpha,(m-1)}\setminus U_{\alpha,m}}(x,1) 
    -\chi_{U_{\alpha,m}\setminus U_{\alpha,(m-1)}}(x,1)\\
    =-\chi_{U_{\alpha,m}\cap U_{\alpha,(m-1)}}(x,1)-\chi_{U_{\alpha,m}\setminus U_{\alpha,(m-1)}}(x,1)
    =-\chi_{U_{\alpha,m}}(x,1)=\psi(x).
\end{multline*}
\noindent {\tiny$\square$} \textit{The interval $[1/2,1-\|q_{n_{(m-1)}}\alpha\|)$}: Let  $x\in[1/2,1-\|q_{n_{(m-1)}}\alpha\|)$, we have
\begin{multline*}
    \tilde\psi(x)=\chi_{E_m}(x)+\chi_{B_m}(x)-2\chi_{D_m}(x)\\
    =|\chi_{U_{\alpha,(m-1)}\setminus U_{\alpha,m}}-\chi_{U_{\alpha,m}\setminus U_{\alpha,(m-1)}}|(x,1)+\chi_{U_{\alpha,(m-1)}}(x,1)-2\chi_{U_{\alpha,(m-1)}\setminus U_{\alpha,m}}(x,1)\\
    =\chi_{U_{\alpha,(m-1)}}(x,1)-\chi_{U_{\alpha,(m-1)}\setminus U_{\alpha,m}}(x,1)+\chi_{U_{\alpha,m}\setminus U_{\alpha,(m-1)}}(x,1)\\
    =\chi_{U_{\alpha,(m-1)}\cap U_{\alpha,m}}(x,1)+\chi_{U_{\alpha,m}\setminus U_{\alpha,(m-1)}}(x,1)
    =\chi_{U_{\alpha,m}}(x,1)=\psi(x).
\end{multline*}
\noindent {\tiny$\square$} \textit{The interval $[1-\|q_{n_{(m-1)}}\alpha\|,1)$}: Let  $x\in[1-\|q_{n_{(m-1)}}\alpha\|,1)$. By item (iv) in Lemma \ref{lem:BasicFactsAboutUalphaTalpha},
$$0=\chi_{U_{\alpha,(m-1)}}(x,1).$$
Thus,
\begin{multline*}
\tilde\psi(x)=\chi_{E_m}(x)=\chi_{U_{\alpha,(m-1)}\setminus U_{\alpha,m}}(x,1)+\chi_{U_{\alpha,m}\setminus U_{\alpha,(m-1)}}(x,1)\\
=\chi_{U_{\alpha,m}\setminus U_{\alpha,(m-1)}}(x,1)
=\chi_{U_{\alpha,m}\setminus U_{\alpha,(m-1)}}(x,1)+\chi_{U_{\alpha,m}\cap U_{\alpha,(m-1)}}(x,1)\\
=\chi_{U_{\alpha,m}}(x,1)=\psi(x).
\end{multline*}
We are done. 
\end{proof}
\subsection{Proof of Sublemma \ref{lem:BoundForTotalVarOfBoundedSets}}\label{sec:lse}
We will prove conditions V1-V3 separetely. 
\subsubsection{Proof of condition V1.}
\begin{proof}
Let $F$ be any of $A_m$, $B_m$, $C_m$, or $D_m$. By item (ii) in Lemma \ref{lem:BasicFactsAboutUalphaTalpha}, $F$ is the disjoint union of intervals each of which is closed on the left and open on the right. Furthermore, each of the end points of such intervals belongs to the set 
$$\Gamma=\{k\alpha\mod 1\,|\,k\in\{0,...,2\sum_{s=1}^{m}q_{n_s}-1\}\}\cup\{\frac{1}{2},1-\|q_{n_{(m-1)}}\alpha\|\}.$$
By condition (b) in Sublemma \ref{lem:BoundForTotalVarOfBoundedSets}, 
$$|\Gamma|=2\sum_{s=1}^{m}q_{n_s}+2=2\sum_{s=1}^{m-1}q_{n_s}+2+2q_{n_m}<q_{n_{(m-1)}+1}+3q_{n_m}<4q_{n_m},$$
where we used that $q_{n_m}\geq 2$. Thus, counting only the left end points of the intervals forming $F$, we see that $F$ is the disjoint union of at most $\frac{4q_{n_m}}{2}=2q_{n_m}$ intervals. 
Note now that 
$$\sup\{\frac{1}{\|x\|}\,|\,x\in F\}\leq\frac{1}{\|q_{n_{(m-1)}}\alpha\|}<2q_{n_{(m-1)}+1}.$$
Thus, $\text{Var}(\chi_F\frac{1}{\|x\|})\leq 8q_{n_{(m-1)}+1}q_{n_m}$, as claimed.
\end{proof}
\subsubsection{Proof of condition V2.}
\begin{proof}
For each $k\in\{0,...,q_{n_m}-1\}$, let 
$$E_{m,k}=(k\alpha\mod 1+\left[2\sum_{s=1}^{m-1}q_{n_s}\alpha\mod 1,(2\sum_{s=1}^{m-1}q_{n_s}+q_{n_m})\alpha\mod 1\right)).$$
By \eqref{eq:SymDiffOfU_m}, 
$$U_{\alpha,m}\triangle U_{\alpha,(m-1)}=\bigcup_{i=0}^{q_{n_m}-1}T_{\alpha,(m-1)}^i(J'_{\alpha,m})=\bigcup_{k=0}^{q_{n_m}-1}(E_{m,k}\times\Z_2),$$
and, hence, $E_m=\bigcup_{k=0}^{q_{n_m}-1}E_{m,k}$. Let $\{\lambda_1,...,\lambda_{q_{n_m}}\}\subseteq[0,1)$ and $\{\rho_1,...,\rho_{q_{n_m}}\}\subseteq[0,1)$ be enumerations in increasing order of the left  and  right end-points of the intervals $E_{m,0},...,E_{m,q_{n_m}-1}$, respectively. Since $m>1$, condition (b) in Sublemma \ref{lem:BoundForTotalVarOfBoundedSets} implies that $2\sum_{s=1}^{m-1}q_{n_s}+2q_{n_m}<q_{n_m+1}$, and, so,
\begin{equation}\label{eq:V2IntervalsAwayfromZero}
\|k\alpha\|\geq \|q_{n_m}\alpha\|>\frac{1}{2q_{n_m+1}},
\end{equation}
for $k\in\{2\sum_{s=1}^{m-1}q_{n_s},....,2\sum_{s=1}^mq_{n_s}-1\}$. Thus, for any $k\in\{1,...,q_{n_m}\}$, 
$$\|q_{n_m}\alpha\|\leq \lambda_k,\rho_k\leq 1-\|q_{n_m}\alpha\|,$$
which implies that $\|q_{n_m}\alpha\|\leq \lambda_k<\rho_k\leq 1-\|q_{n_m}\alpha\|$ (otherwise we would have $\|q_{n_m}\alpha\|=\|\rho_k-\lambda_k\|\geq 2\|q_{n_m}\alpha\|$). It now follows that
\begin{multline}\label{eq:V2TrivialBound}
\text{Var}(\chi_{E_m}\frac{1}{\|x\|})\leq \sum_{k=1}^{q_{n_m}}2\max\{\frac{1}{\|\lambda_k\|},\frac{1}{\|\rho_k\|}\}<\sum_{k=1}^{q_{n_m}}\frac{2}{\|\lambda_k\|}+\sum_{k=1}^{q_{n_m}}\frac{2}{\|\rho_k\|}.
\end{multline}
Observe now that 
any two of the points in $\{\lambda_1,...,\lambda_{q_{n_m}}\}$ and $\{\rho_1,...,\rho_{q_{n_m}}\}$, respectively,  are at least
\begin{equation}\label{eq:V2IntervalsAppart}
\|q_{n_m-1}\alpha\|>\frac{1}{2q_{n_m}}
\end{equation}
apart. Thus, by combining \eqref{eq:V2IntervalsAwayfromZero}, \eqref{eq:V2TrivialBound}, and \eqref{eq:V2IntervalsAppart}, we obtain 
\begin{multline*}
\text{Var}(\chi_{E_m}\frac{1}{\|x\|})<\sum_{k=1}^{q_{n_m}}\frac{2}{\|\lambda_k\|}+\sum_{k=1}^{q_{n_m}}\frac{2}{\|\rho_k\|}
\leq\sum_{k=0}^{q_{n_m}-1}\frac{4}{\|q_{n_m}\alpha\|+k\|q_{n_m-1}\alpha\|}\\
<\frac{4}{\|q_{n_m}\alpha\|}+\sum_{k=1}^{q_{n_m}-1}\frac{4}{k\|q_{n_m-1}\alpha\|}
<8q_{n_m+1}+8q_{n_m}\sum_{k=1}^{q_{n_m}-1}\frac{1}{k}<8(q_{n_m+1}+q_{n_m}+q_{n_m}\log(q_{n_m})).
\end{multline*}
\end{proof}
\subsubsection{Proof of condition V3.}
\begin{proof}
Let the intervals $E_{m,k}$, $k\in\{0,...,q_{n_m}-1\}$, and the points  $\{\lambda_1,...,\lambda_{q_{n_m}}\}$, and $\{\rho_1,...,\rho_{q_{n_m}}\}$ be as in the proof of condition V2 above. 
It follows that 
\begin{multline*}
\|q_{n_m}\alpha\|\sum_{k=1}^{q_{n_m}}\frac{1}{\rho_k}\leq \int_{E_m\subseteq [0,1)}\frac{1}{x}\text{d}\lambda'(x)
\leq \int_{E_m}\frac{1}{\|x\|}\text{d}\lambda'(x)
\leq\|q_{n_m}\alpha\|\sum_{k=1}^{q_{n_m}}\max\{\frac{1}{\|\lambda_k\|},\frac{1}{\|\rho_k\|}\}.
\end{multline*}
Arguing as in the proof of V2.,
\begin{multline*}
    \sum_{k=1}^{q_{n_m}}\max\{\frac{1}{\|\lambda_k\|},\frac{1}{\|\rho_k\|}\}
    <\sum_{k=0}^{q_{n_m}-1}\frac{2}{\|q_{n_m}\alpha\|+k\|q_{n_m-1}\alpha\|}\\
\leq \frac{2}{\|q_{n_m}\alpha\|}+\sum_{k=1}^{q_{n_m}-1}\frac{2}{k\|q_{n_m-1}\alpha\|}
\leq\frac{2}{\|q_{n_m}\alpha\|}+4q_{n_m}(1+\log(q_{n_m}))
\end{multline*}
Thus, by condition (a) in Sublemma \ref{lem:BoundForTotalVarOfBoundedSets},
\begin{multline*}
    \int_{E_m}\frac{1}{\|x\|}\text{d}\lambda'(x)
\leq\|q_{n_m}\alpha\|\sum_{k=1}^{q_{n_m}}\max\{\frac{1}{\|\lambda_k\|},\frac{1}{\|\rho_k\|}\}\\
\leq \|q_{n_m}\alpha\|(\frac{2}{\|q_{n_m}\alpha\|}+4q_{n_m}(1+\log(q_{n_m})))\leq 2+\frac{4q_{n_m}}{q_{n_m+1}}(1+\log(q_{n_m}))\\
\leq 2+\frac{4q_{n_m}}{Mq_{n_m}}(1+\log(q_{n_m}))=2+4\frac{1+\log(q_{n_m})}{M}
\end{multline*}
We now claim that for any $k\in\{0,...,q_{n_m}-1\}$, there exists $\ell,r\in\{0,...,q_{n_m}-1\}$ such that 
\begin{equation}\label{eq:PointsAreNearBy}
\|q_{n_m-1}\alpha\|\leq (k-\ell)\alpha\mod 1,(r-k)\alpha\mod 1\leq\|q_{n_m}\alpha\|+\|q_{n_m-1}\alpha\|.
\end{equation}
Indeed, take $\ell$ to be the unique element in 
$$\{0,...,q_{n_m}-1\}\cap\{k+q_{n_m-1},k+q_{n_m-1}-q_{n_m}\}$$
and $r$ the unique element in 
$$\{0,...,q_{n_m}-1\}\cap\{k-q_{n_m-1},k-q_{n_m-1}+q_{n_m}\},$$
Noting that $n_m$ is even (and $n_m-1$ is odd), \eqref{eq:PointsAreNearBy} follows immediately.\\
Thus, for any $k\in\{1,...,q_{n_m}-1\}$, 
$$\|\rho_{k+1}-\rho_k\|\leq \|q_{n_m}\alpha\|+\|q_{n_m-1}\alpha\|,$$
which in turn implies
$$\sum_{k=1}^{q_{n_m}}\frac{1}{\rho_k}\geq \sum_{k=0}^{q_{n_m}-1}\frac{1}{\rho_1+k(\|q_{n_m}\alpha\|+\|q_{n_m-1}\alpha\|)}.$$
By condition (b) in Sublemma  \ref{lem:BoundForTotalVarOfBoundedSets}, 
$$2\sum_{s=1}^{m-1}q_{n_s}<q_{n_{(m-1)}+1}< q_{n_m}<2q_{n_m}<2\sum_{s=1}^mq_{n_s}<q_{n_m+1}.$$
So, since $n_m$ is even, 
\begin{equation*}\label{eq:FirstLeftEndPoint}
\lambda_1=\|q_{n_m}\alpha\|\text{ and }\rho_1=2\|q_{n_m}\alpha\|,
\end{equation*}
We now have
 \begin{multline*}
 \int_{E_m}\frac{1}{\|x\|}\text{d}\lambda'(x)\geq \int_{E_m}\frac{1}{x}\text{d}\lambda'(x)\geq
 \|q_{n_m}\alpha\| \sum_{k=1}^{q_{n_m}}\frac{1}{\rho_k}\\
 \geq  \|q_{n_m}\alpha\|\sum_{k=0}^{q_{n_m}-1}\frac{1}{2\|q_{n_m}\alpha\|+k(\|q_{n_m}\alpha\|+\|q_{n_m-1}\alpha\|)}
\geq \frac{\|q_{n_m}\alpha\|}{2\|q_{n_m-1}\alpha\|}\sum_{k=1}^{q_{n_m}}\frac{1}{k}\\
 \geq\frac{q_{n_m}}{2(q_{n_m+1}+q_{n_m})}\sum_{k=1}^{q_{n_m}}\frac{1}{k}>\frac{q_{n_m}\log(q_{n_m})}{2(q_{n_m+1}+q_{n_m})}\geq \frac{q_{n_m}\log(q_{n_m})}{2(M+2)q_{n_m}}=\frac{\log(q_{n_m})}{2(M+2)}.
 \end{multline*}
 We are done. 
\end{proof}
\section{Unique ergodicity of the flow}\label{sec:ue}
In Sections $5-7$ we showed that the flow $T^f$ described on Theorem \hyperlink{theorem:Main2'}{A$^\prime$} is not strongly mixing. In this section we show that the IET underlying $T^f$, $T=T_{\alpha_0,n_k}$, is uniquely ergodic. To do this, we will closely follow the ideas in the proof of unique ergodicity of a related IET, which we call $S$, in \cite{chaika}. We remark that since many of the properties of such an $S$ are incompatible with the properties of $T$, we will need to adapt some of the arguments used in \cite{chaika}.\\

Let $\alpha=\alpha_0$ be the irrational number guaranteed to exist by Proposition \hyperlink{prop:PropertiesOFAlpha0}{B}. So, in particular, there exists an increasing sequence $(n_k)_{k\in\N}$ in $2\N$ such that if we denote by $[0;a_1,...]$ the continued fraction expansion of $\alpha$ and by $(q_n)_{n\in\N}$ its denominators sequence, we have 
\begin{enumerate}
    \item [($\alpha$.5)] For every $k\in\N$, $a_{n_{(2k-1)}+1}= 3$.
    \item [($\alpha$.6)]  $\lim_{k\rightarrow\infty}(\sum_{s=1}^kq_{n_s})/q_{n_{(k+1)}}=0$.
    \item [($\alpha$.7)] $\lim_{k\rightarrow\infty}a_{n_{2k}+1}=\infty$. 
    \item [($\alpha$.8)] $\lim_{k\rightarrow\infty}(q_{n_k}\sum_{s=k+1}^\infty\|q_{n_s}\alpha_0\|)=0.$
\end{enumerate}
As we will see, the following conditions, which are weaker than ($\alpha$.5)-($\alpha$.8) and we formulate for $\alpha=\alpha_0$, already imply the unique ergodicity of $T_{\alpha_0,n_k}$.
\begin{enumerate}
    \item [(E.1)] There exists an $M\in\N$ such that for every $k\in\N$, $3\leq a_{n_{(2k-1)}+1}\leq M$.
    \item [(E.2)]  $\lim_{k\rightarrow\infty}(\sum_{s=1}^{2k-1}q_{n_s})/q_{n_{2k}}=0$.
    \item [(E.3)] $\lim_{k\rightarrow\infty}a_{n_{2k}+1}=\infty$. 
    \item [(E.4)] $\lim_{k\rightarrow\infty}(q_{n_k}\sum_{s=k+1}^\infty\|q_{n_s}\alpha\|)=0.$
\end{enumerate}
Our goal in this section is to prove the following result. 
\begin{proposition}\label{thm:UniqueErgodicity}
    Let $\alpha\in (0,1)$ be an irrational number and let $(n_k)_{k\in\N}$ be an increasing sequence in $2\N$. Suppose that $\alpha$ and $(n_k)_{k\in\N}$ satisfy conditions (E.1)-(E.4). Then $T=T_{\alpha,n_k}$, as defined in \eqref{eq:skewAlpha},  is uniquely ergodic. 
\end{proposition}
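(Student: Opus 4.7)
The plan is to reduce unique ergodicity of $T=T_{\alpha,n_k}$ to $\lambda$-ergodicity and then derive a contradiction from any non-trivial $T$-invariant set, using the hierarchical tower partitions $\{U_{\alpha,m},V_{\alpha,m}\}$ as ``test sets'', in the spirit of \cite{chaika}. Since $R_\alpha$ is uniquely ergodic, any $T$-invariant probability $\mu$ on $\T\times\Z_2$ projects to $\lambda'$; the involution $\sigma(x,j)=(x,j+1)$ commutes with $T$, so $\tfrac12(\mu+\sigma_*\mu)$ is both $T$- and $\sigma$-invariant and hence equals $\lambda$. Thus $\mu\leq 2\lambda$, the density $d\mu/d\lambda$ is bounded and $T$-invariant, and $\lambda$-ergodicity of $T$ forces $\mu=\lambda$. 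Because $T$ is a $\Z_2$-extension of an ergodic rotation, non-ergodicity is equivalent to the existence of a measurable $T$-invariant set $A\subset\T\times\Z_2$ with $\lambda(A)=1/2$ (equivalently, to the cocycle $\chi_{J_\alpha(n_k)}\circ R_\alpha$ being a coboundary modulo $2$); I argue that no such $A$ exists.

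\textit{Tracking by towers.} For each $m,N\in\N$ set $G_{m,N}=\{(x,j):T^k(x,j)=T_{\alpha,m}^k(x,j)\text{ for }0\leq k<N\}$. Since $T$ and $T_{\alpha,m}$ differ only on orbits meeting $J_\alpha(n_k)\setminus J^m_\alpha(n_k)$, which has $\lambda'$-measure $2\sum_{s>m}\|q_{n_s}\alpha\|$, \eqref{eq:saz} gives $\lambda(G_{m,N}^c)\leq 2N\sum_{s>m}\|q_{n_s}\alpha\|$. By (E.4) one may choose $N_m\to\infty$ with $\lambda(G_{m,N_m}^c)\to 0$. Now $U_{\alpha,m}$ is $T_{\alpha,m}$-invariant (Lemma \ref{lem:BasicFactsAboutUalphaTalpha}(i), applicable by Remark \ref{rem:HierarchyOFConditions} and (E.1)), so on $G_{m,N_m}$ every $T$-orbit segment of length $N_m$ lies entirely in $U_{\alpha,m}$ or in $V_{\alpha,m}$, whence
\[
\Bigl\|\chi_{U_{\alpha,m}}-\tfrac1{N_m}\sum_{k=0}^{N_m-1}\chi_{U_{\alpha,m}}\circ T^k\Bigr\|_{L^2(\lambda)}^2\leq 4\lambda(G_{m,N_m}^c)\longrightarrow 0.
\]
By the mean ergodic theorem for $T$, the average $\tfrac1N\sum_{k<N}\chi_{U_{\alpha,m}}\circ T^k$ converges in $L^2$ to $\mathbb{E}[\chi_{U_{\alpha,m}}\mid\mathcal{I}]=\beta_m\chi_A+(1-\beta_m)\chi_{A^c}$, where $\mathcal{I}$ is the $T$-invariant $\sigma$-algebra and $\beta_m=2\lambda(U_{\alpha,m}\cap A)$. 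A diagonal argument letting $m,N_m\to\infty$ simultaneously then yields $\beta_m(1-\beta_m)\to 0$, i.e., along some subsequence of $m$'s either $\lambda(A\triangle U_{\alpha,m})\to 0$ or $\lambda(A\triangle V_{\alpha,m})\to 0$.

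\textit{Closing the argument.} By \eqref{eq:SymDiffOfU_m}, $\lambda(U_{\alpha,m}\triangle U_{\alpha,m-1})=q_{n_m}\|q_{n_m}\alpha\|$, which by (E.1) lies in $[\tfrac1{M+2},\tfrac13]$ at odd $m$ and by (E.3) tends to $0$ at even $m$. Since $V_{\alpha,m}=U_{\alpha,m}^c$ and $\lambda(U_{\alpha,m})=\lambda(U_{\alpha,m-1})=1/2$, one has $\lambda(U_{\alpha,m}\triangle V_{\alpha,m-1})=1-\lambda(U_{\alpha,m}\triangle U_{\alpha,m-1})\in[\tfrac23,\tfrac{M+1}{M+2}]$ at odd $m$. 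Applying the tracking step at consecutive indices $m$ (odd) and $m-1$ (even, along a refined subsequence) and the triangle inequality, one of $\lambda(U_{\alpha,m}\triangle U_{\alpha,m-1})$, $\lambda(U_{\alpha,m}\triangle V_{\alpha,m-1})$, $\lambda(V_{\alpha,m}\triangle U_{\alpha,m-1})$, $\lambda(V_{\alpha,m}\triangle V_{\alpha,m-1})$ must tend to $0$ along that subsequence; but each of these four quantities coincides with $\lambda(U_{\alpha,m}\triangle U_{\alpha,m-1})$ or its complement in $1$, and both stay bounded away from $0$ at odd $m$. This contradiction shows $T$ is $\lambda$-ergodic, and hence uniquely ergodic.

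\textit{Main obstacle.} The delicate step is the diagonal passage combining the finite-time identity $\chi_{U_{\alpha,m}}\approx\tfrac1{N_m}\sum_{k<N_m}\chi_{U_{\alpha,m}}\circ T^k$ with the mean ergodic convergence, because the $L^2$-rate of the latter is not uniform in $m$ and the function $\chi_{U_{\alpha,m}}$ itself varies with $m$. Resolving this requires the explicit combinatorial description of $U_{\alpha,m}$ from Lemma \ref{lem:BasicFactsAboutUalphaTalpha}(ii) (a union of $O(q_{n_m})$ intervals) together with (E.2), which controls this complexity against the large scale $q_{n_{2k}}$ set by (E.3); this is precisely where the Chaika--Wright machinery must be adapted to the present $\Z_2$-extension.
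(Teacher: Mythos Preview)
Your reduction from unique ergodicity to $\lambda$-ergodicity via the involution is correct and matches Lemma~\ref{lem:NonUniqueErgForTxZ_2}, and your endgame with the odd/even dichotomy on $\lambda(U_{m}\triangle U_{m-1})$ is exactly the paper's closing step. The problem lies in the middle, at the ``diagonal argument''.

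Your tracking step establishes
\[
\Bigl\|\chi_{U_{\alpha,m}}-\tfrac{1}{N_m}\sum_{k<N_m}\chi_{U_{\alpha,m}}\circ T^k\Bigr\|_{L^2}\to 0,
\]
but this does \emph{not} give $\beta_m(1-\beta_m)=\|\chi_{U_{\alpha,m}}-\mathbb{E}[\chi_{U_{\alpha,m}}\mid\mathcal I]\|_{L^2}^2\to 0$. The mean ergodic theorem tells you $A_N f\to Pf$ for each fixed $f$, with no uniformity in $f$; here $f=\chi_{U_{\alpha,m}}$ changes with $m$, so no diagonal selection is available. Worse, on the large set $G_{m,N_m}$ your average $A_{N_m}^{T}\chi_{U_{\alpha,m}}$ equals $\chi_{U_{\alpha,m}}$ \emph{exactly} (because $U_{\alpha,m}$ is $T_{\alpha,m}$-invariant), so the displayed estimate is essentially a tautology and carries no information about $\beta_m$. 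In particular, the interval structure of $U_{\alpha,m}$ and condition (E.2), which you correctly flag as relevant, never enter your computation of that $L^2$ norm.

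The paper avoids this trap by reversing the roles of $A$ and $U_m$: one averages $\chi_A$ (which is \emph{exactly} $T$-invariant, so its $T$-ergodic average is $\chi_A$ itself) along $T_{2k}$-orbits of length $q_{n_{2k}}$. On $G_{2k,q_{n_{2k}}}$ this equals the $T$-average, hence $\chi_A$. Independently, Lemma~\ref{lem:GoodErgodicSumsOnU_2k} shows that this $T_{2k}$-average is, on $U_{2k}$, close to $2\lambda(U_{2k}\cap A)$: one approximates $A$ by finitely many intervals, projects $A\cap U_{2k-1}$ to $\T$ to obtain a union of $o(q_{n_{2k}})$ intervals (this is where (E.2) is used), and applies Denjoy--Koksma for $R_\alpha$ at the denominator $q_{n_{2k}}$. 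Comparing the two expressions forces $|\chi_A-2\lambda(U_{2k}\cap A)|\leq\epsilon$ on almost all of $U_{2k}$, hence $2\lambda(U_{2k}\cap A)\in[0,\epsilon]\cup[1-\epsilon,1]$, which is precisely your $\beta_m(1-\beta_m)\to 0$. Once you swap the roles this way, the rest of your outline goes through unchanged.
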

The proof of Proposition \ref{thm:UniqueErgodicity} will require various results which we divide into three groups. The first group of results deals with basic measure theoretical estimates involving the sets of the form $U_{\alpha,k}$ defined in \eqref{eq:DefnU_m}. The second, deals with the potential ergodic measures for $T_{\alpha,n_k}$. The third group deals with the "asymptotic" equidistribution  of $(T_{\alpha,2k})_{k\in\N}$. For the rest of this section we will fix $\alpha$ and $(n_k)_{k\in\N}$ satisfying conditions (E.1)-(E.4).
\subsection{Basic measure theoretical estimates}
For each $k\in\N$, let $T_k=T_{\alpha,k}$ be as defined in \eqref{eq:SthSkew}, let $U_k=U_{\alpha,k}$ be as defined in \eqref{eq:DefnU_m}, and let $V_k=V_{\alpha,k}$ be as defined in \eqref{eq:DefnV_m}. For each $k\in\N$ let 
$$J_k:=J'_{\alpha,k}=[2\sum_{s=0}^{k-1}q_{n_s}\alpha,2\sum_{s=0}^{k-1}q_{n_s}\alpha+q_{n_k}\alpha)\times\Z_2,$$
where, by convention, $q_{n_0}=0$. Observe that, by  \eqref{eq:BoundOnSumOfClosestIntegers} and (E.1), we have 
$$2\sum_{s=1}^\infty\|q_{n_s}\alpha\|<\frac{4}{q_{n_1+1}}\leq \frac{4}{3q_{2}+q_1}\leq \frac{4}{7}<1.$$
So, by Lemma  \ref{lem:BasicFactsAboutUalphaTalpha} item (i), $U_k$ and $V_k$ are $T_k$-invariant for each $k\in\N$.\\
By \eqref{eq:SymDiffOfU_m}, for each $k\in\N$ we have 
\begin{equation}\label{eq:SymDiffU_kSimpleNotation}
U_k\triangle U_{k-1}=\bigcup_{i=0}^{q_{n_k}-1}T_{(k-1)}^i(J_k),
\end{equation}
where $U_0=\T\times\{0\}$ and for $(x,j)\in\T\times\Z_2$, $T_0(x,j)=(x+\alpha,j)$.\\
Note that the sets $T^iJ_k$, $i=0,...,q_{n_k}-1$, are pairwise disjoint. Thus, 
$$\frac{1}{a_{n_k+1}+2}<\frac{q_{n_k}}{q_{n_k}+q_{n_k+1}}<\lambda(U_k\triangle U_{k-1})=q_{n_k}\|q_{n_k}\alpha\|<\frac{q_{n_k}}{q_{n_k+1}}<\frac{1}{a_{n_k+1}}.$$
It now follows  from (E.1) that
\begin{equation}\label{eq:SymDiffU_2kU_2k+1}
   \frac{1}{M+2}\leq \frac{1}{a_{n_{(2k+1)}+1}+2} <\lambda(U_{2k+1}\triangle U_{2k})<\frac{1}{a_{n_{(2k+1)}+1}}\leq \frac{1}{3}
\end{equation}
for any $k\in\N$ and from (E.3) that 
\begin{equation}\label{eq:EvenSymDiffGoesToZero}
    \lim_{k\rightarrow\infty}\lambda(U_{2k}\triangle U_{2k-1})\leq \lim_{k\rightarrow\infty}\frac{1}{a_{n_{2k}+1}}=0.
\end{equation}
\subsection{Potential ergodic measures for $T_{\alpha,n_k}$}
We define the involution $\textit{i}:\mathbb T\times\Z_2\rightarrow\mathbb T\times\Z_2$ by $\textit{i}(x,j)=(x,j+1)$. It was shown in \cite{chaika} that for every $k\in\N\cup\{0\}$, $T_k\circ \textit{i}=\textit{i}\circ T_k$, $\textit{i}(U_k)=V_k$, and, hence, \\
 $$\lambda(U_k)=\lambda(V_k)=1/2.$$
 (Note that here we are using the fact that $\{U_k,V_k\}$ is a partition of $\T\times\Z_2$.) We will also let $R=R_\alpha:\T\rightarrow\T$ be defined by $R(x)=x+\alpha$.\\
 
 The following result shows that there are at most two $T$-invariant  ergodic probability measures. 
\begin{lemma}[Cf. Lemma 7.5 in \cite{chaika}]\label{lem:NonUniqueErgForTxZ_2}
If $T$ is not uniquely ergodic, then there exist exactly two $T$-invariant ergodic probability measures $\mu$ and $\nu$. Furthermore, $\mu$ and $\nu$ are such that (a)  $\lambda=\frac{1}{2}(\mu+\nu)$ and (b) $\nu=\mu\circ \textit{i}^{-1}$, where $\lambda$ denotes the normalized Lebesgue measure on $\mathbb T\times\Z_2$.
\end{lemma}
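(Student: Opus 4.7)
The plan is to exploit two structural symmetries of $T=T_{\alpha,n_k}$: it is a $\mathbb{Z}_2$-extension over the uniquely ergodic irrational rotation $R_\alpha$ on $\mathbb{T}$, and it commutes with the fiber-swap involution $\textit{i}$, which itself preserves $\lambda$. First I would observe that every $T$-invariant probability measure $\mu$ projects under $\pi:\mathbb{T}\times\mathbb{Z}_2\to\mathbb{T}$ to an $R_\alpha$-invariant probability, which must be $\lambda'$ by unique ergodicity of the irrational rotation. Disintegrating $\mu$ over $\pi$ yields $\mu=\int\mu_x\,d\lambda'(x)$ with $\mu_x$ supported on $\{(x,0),(x,1)\}$, so $\mu\ll\lambda$ with density $d\mu/d\lambda(x,j)=2p(x)\chi_{\{0\}}(j)+2(1-p(x))\chi_{\{1\}}(j)$ for some measurable $p:\mathbb{T}\to[0,1]$. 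Consequently, if $\lambda$ were $T$-ergodic, $T$-invariance of the density would force $p$ to be constant $\lambda'$-a.e., whence $\mu=\lambda$; thus non-unique ergodicity is equivalent to the existence of a $T$-invariant set $A\subset\mathbb{T}\times\mathbb{Z}_2$ with $0<\lambda(A)<1$.

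The heart of the argument is to show that any such $A$ satisfies $\lambda(A)=\tfrac{1}{2}$ and $\textit{i}(A)=A^c$ mod $0$. Since $T\circ \textit{i}=\textit{i}\circ T$, the set $\textit{i}(A)$ is also $T$-invariant, hence so is $A\triangle\textit{i}(A)$. The latter is $\textit{i}$-invariant by construction, and any $\textit{i}$-invariant set has the form $C\times\mathbb{Z}_2$ with $C\subset\mathbb{T}$; the $T$-invariance then forces $C$ to be $R_\alpha$-invariant, so unique ergodicity of $R_\alpha$ gives $\lambda(A\triangle \textit{i}(A))\in\{0,1\}$. The value $0$ would make $A$ itself essentially $\textit{i}$-invariant, hence of the form $C'\times\mathbb{Z}_2$ mod $0$, forcing $\lambda(A)\in\{0,1\}$ by unique ergodicity and contradicting the choice of $A$. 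Therefore $A\cap \textit{i}(A)$ is null and $A\cup \textit{i}(A)$ is conull; since $\textit{i}_*\lambda=\lambda$ this yields $\lambda(A)=\lambda(\textit{i}(A))=\tfrac{1}{2}$ and $\textit{i}(A)=A^c$ mod $0$.

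I then set $\mu:=2\lambda|_A$ and $\nu:=2\lambda|_{A^c}$. These are $T$-invariant probability measures, and the identities $\lambda=\tfrac{1}{2}(\mu+\nu)$ and $\nu=\mu\circ \textit{i}^{-1}$ follow directly from $\textit{i}(A)=A^c$ and $\textit{i}_*\lambda=\lambda$. For the ergodicity of $\mu$, given any $T$-invariant set $B$, the set $B\cap A$ is $T$-invariant with $\lambda(B\cap A)\le\tfrac{1}{2}$; applying the dichotomy of the previous paragraph to $B\cap A$ (when it has positive measure less than $1$) forces $\lambda(B\cap A)\in\{0,\tfrac{1}{2}\}$, i.e.\ $\mu(B)\in\{0,1\}$. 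The analogous argument gives ergodicity of $\nu$.

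Finally, to rule out further ergodic measures, let $\mu'$ be any $T$-invariant ergodic probability. Since $A$ is $T$-invariant, $\mu'(A)\in\{0,1\}$; replacing $\mu'$ by $\textit{i}_*\mu'$ if necessary, assume $\mu'(A)=1$. Because $A\cap \textit{i}(A)$ is null and $A\cup \textit{i}(A)$ is conull, there is a measurable $a:\mathbb{T}\to\mathbb{Z}_2$ with $A=\{(x,a(x)):x\in\mathbb{T}\}$ mod $0$, and the map $\psi:A\to\mathbb{T}$, $\psi(x,a(x))=x$, conjugates $T|_A$ to $R_\alpha$ (the cocycle identity $a(x+\alpha)=a(x)+\chi_{J_\alpha(n_k)}(x+\alpha)\bmod 2$ is exactly the statement that $T|_A$ is well-defined into $A$). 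Pushing $\mu'$ and $\mu$ forward by $\psi$ produces two $R_\alpha$-invariant probabilities on $\mathbb{T}$, each of which must equal $\lambda'$ by unique ergodicity of $R_\alpha$; since $\psi$ is a measurable bijection mod null sets, this forces $\mu'=\mu$, and symmetrically $\textit{i}_*\mu'=\nu$. The main obstacle is the dichotomy producing $\textit{i}(A)=A^c$ mod $0$, since this is where the interplay between the $\mathbb{Z}_2$-symmetry and unique ergodicity of $R_\alpha$ is genuinely exploited; once this is in hand, ergodicity and uniqueness follow by routine measure-theoretic manipulations.
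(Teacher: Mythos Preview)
Your argument is correct, but it follows a different route from the paper's. The paper takes an arbitrary ergodic measure $\mu$, forms $\sigma=\tfrac12(\mu+\mu\circ\textit{i}^{-1})$, and identifies $\sigma$ with $\lambda$ directly: for a cylinder $U=V\times\{0\}$ one has $\chi_{U\cup\textit{i}^{-1}U}=\chi_V\circ\pi$, so the Birkhoff averages of this function under $T$ coincide with those of $\chi_V$ under $R_\alpha$, which by unique ergodicity of the rotation converge everywhere to $\lambda'(V)=2\lambda(U)$; Birkhoff's theorem for $\mu$ then gives $\sigma(U)=\lambda(U)$, and the $\pi$--$\lambda$ theorem finishes the identification. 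Uniqueness of the pair $\{\mu,\mu\circ\textit{i}^{-1}\}$ is then read off from the uniqueness of the ergodic decomposition of $\lambda$.

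Your approach instead first proves that every $T$-invariant probability is absolutely continuous with respect to $\lambda$ (via disintegration over $\pi$), reduces non-unique ergodicity to the existence of a nontrivial $T$-invariant set $A$, and establishes the dichotomy $\textit{i}(A)=A^c$ (mod~$0$) by exploiting that $\textit{i}$-invariant $T$-invariant sets project to $R_\alpha$-invariant sets. You then build $\mu,\nu$ as the restrictions $2\lambda|_A$, $2\lambda|_{A^c}$, prove their ergodicity by reapplying the dichotomy, and obtain uniqueness by observing that $A$ is the graph of a measurable section $a:\T\to\Z_2$, so that $T|_A$ is measurably conjugate to $R_\alpha$. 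This is more structural and yields extra information (the graph description of the ergodic components) at the cost of a longer argument; the paper's proof is shorter and leans on the ergodic decomposition as a black box rather than constructing the invariant set explicitly.
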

\begin{proof}
Let $\mu$ be an ergodic probability measure for $T$. Since $\textit{i}\circ T=T\circ\textit{i}$, the probability measure $\nu=\mu\circ\textit{i}^{-1}$ is also $T$-invariant and ergodic. Let $\sigma=\frac{1}{2}(\mu+\nu)$. Let $V$ be a (possibly empty)  open interval in $\mathbbm T$ and let $U=V\times\{0\}$. On one hand, since $R=R_\alpha$ is an irrational rotation, 
$$2\lambda(U)=\lim_{N\rightarrow\infty}\frac{1}{N}\sum_{j=0}^{N-1}\chi_V(R^jx),$$
for every $x\in\mathbb T$. 
On the other hand, by  Birkhoff's Ergodic Theorem, for $\mu$-almost every $(x,j)\in\mathbb T\times \Z_2$, 
$$\sigma(U)=\frac{1}{2}(\mu(U)+\nu(U))=\frac{1}{2}(\mu(U)+\mu(\textit{i}^{-1}U))=\lim_{N\rightarrow\infty}\frac{1}{2N}\sum_{j=0}^{N-1}\chi_{U\cup\textit{i}^{-1}U}(T^j(x,j)).$$
Noting that 
$$\lim_{N\rightarrow\infty}\frac{1}{2N}\sum_{j=0}^{N-1}\chi_V(R^jx)=\lim_{N\rightarrow\infty}\frac{1}{2N}\sum_{j=0}^{N-1}\chi_{U\cup\textit{i}^{-1}U}(T^j(x,j))$$
for $\mu$-almost every $(x,j)\in\mathbbm T\times\Z_2$, we see that $\sigma(U)=\lambda(U)$. In a similar way, one can show that $\sigma(V\times\{1\})=\lambda(V\times\{1\})$. Thus, by the $\pi$-$\lambda$ Theorem, $\sigma=\lambda$ and, hence,  $\lambda=\frac{1}{2}(\mu+\nu)$.\\
Since the ergodic measure $\mu$ in the previous argument was arbitrary, we see that for any two ergodic measures $\mu$, $\mu'$, 
$$\lambda=\frac{1}{2}(\mu+\mu\circ\textit{i}^{-1})=\frac{1}{2}(\mu'+\mu'\circ\textit{i}^{-1}).$$
Thus, by the uniqueness of the ergodic decomposition for $\lambda$, either $\mu=\mu'$ or $\mu=\mu'\circ\textit{i}^{-1}$. We are done. 
\end{proof}
\subsection{Asymptotic equidistribution of $(T_{\alpha,2k})_{k\in\N}$}
In this subsection we prove the following lemma. 
\begin{lemma}[Cf. Lemma 7.7 in \cite{chaika}] \label{lem:GoodErgodicSumsOnU_2k}
Let $A\subseteq \mathbb T\times \Z_2$ be measurable. For any $\epsilon>0$,
\begin{equation}\label{eq:ConvergenceToHalfLebesgue1}
\lim_{k\rightarrow\infty}\lambda\left(\{(x,j)\in U_{2k}\,|\,\left|\frac{1}{q_{n_{2k}}}\sum_{i=0}^{q_{n_{2k}}-1}\chi_A(T^i_{2k}(x,j))-2\lambda(U_{2k}\cap A)\right|>\epsilon\}\right)=0.
\end{equation}
A similar result holds when one replaces $U_{2k}$ by $V_{2k}$.
\end{lemma}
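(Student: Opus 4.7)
The plan is to use the bijection $\pi:U_{2k}\to\mathbb{T}$ (item (iii) of Lemma \ref{lem:BasicFactsAboutUalphaTalpha} gives that the two fibers $U_{2k}^0, U_{2k}^1$ partition $\mathbb{T}$) to transfer the Birkhoff sums along $T_{2k}|_{U_{2k}}$ to Birkhoff sums along the rotation $R_\alpha$ on $\mathbb{T}$. Since $T_{2k}$ projects to $R_\alpha$ and $\pi$ is injective on $U_{2k}$, the triple $(T_{2k}|_{U_{2k}}, U_{2k}, \lambda|_{U_{2k}})$ is measurably conjugate to $(R_\alpha, \mathbb{T}, \tfrac{1}{2}\lambda')$. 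Setting $A_k^* := \pi(A \cap U_{2k})$, one has $\lambda'(A_k^*) = 2\lambda(A \cap U_{2k})$ and $\chi_A(T_{2k}^i(x,j)) = \chi_{A_k^*}(R_\alpha^i x)$ for every $(x,j) \in U_{2k}$ and every $i \geq 0$. The desired conclusion thus becomes: for every $\epsilon > 0$,
\begin{equation*}
\lambda'\Bigl(\bigl\{x \in \mathbb{T} : \bigl|\tfrac{1}{q_{n_{2k}}} S_{q_{n_{2k}}}(R_\alpha, \chi_{A_k^*})(x) - \lambda'(A_k^*)\bigr| > \epsilon\bigr\}\Bigr) \longrightarrow 0 \text{ as } k \to \infty.
\end{equation*}

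Given $\delta > 0$, I will first approximate $A$ in $\lambda$-measure to within $\delta$ by a finite disjoint union of rectangles $A' = \bigsqcup_{\ell=1}^L I_\ell \times \{j_\ell\}$ with each $I_\ell \subseteq \mathbb{T}$ an interval. Using the injectivity of $\pi$ on $U_{2k}$, one computes $\lambda'(A_k^* \triangle (A')_k^*) = 2\lambda((A \triangle A') \cap U_{2k}) \leq 2\delta$. Since $\frac{1}{N} S_N(R_\alpha, \cdot)$ preserves the $\lambda'$-integral, the $L^1(\lambda')$-norm of the difference of the two Birkhoff averages equals $\lambda'(A_k^* \triangle (A')_k^*) \leq 2\delta$, and a Markov inequality absorbs this error at level $\epsilon/3$ once $\delta$ is small enough. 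It therefore suffices to prove the claim for $A'$.

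The key second reduction is to replace $U_{2k}$ by $U_{2k-1}$: let $(A')_k^\sharp := \pi(A' \cap U_{2k-1}) \subseteq \mathbb{T}$. By \eqref{eq:EvenSymDiffGoesToZero} -- a consequence of condition (E.3) -- $\lambda(U_{2k} \triangle U_{2k-1}) \leq 1/a_{n_{2k}+1} \to 0$, so $\lambda'((A')_k^* \triangle (A')_k^\sharp) \leq 2L/a_{n_{2k}+1} \to 0$, and the same Markov argument absorbs this second error term as well. The decisive point is then a direct application of the Denjoy-Koksma inequality: by item (ii) of Lemma \ref{lem:BasicFactsAboutUalphaTalpha}, each set $U_{2k-1}^j := \{x : (x,j) \in U_{2k-1}\}$ is a union of at most $2\sum_{s=1}^{2k-1} q_{n_s}$ intervals, so the disjoint union $(A')_k^\sharp = \bigsqcup_\ell (I_\ell \cap U_{2k-1}^{j_\ell})$ has total variation bounded by $O\bigl(L\sum_{s=1}^{2k-1} q_{n_s}\bigr)$, which by condition (E.2) is $o(q_{n_{2k}})$. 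Denjoy-Koksma then gives
\begin{equation*}
\Bigl|\tfrac{1}{q_{n_{2k}}} S_{q_{n_{2k}}}(R_\alpha, \chi_{(A')_k^\sharp})(x) - \lambda'((A')_k^\sharp)\Bigr| = o(1)
\end{equation*}
uniformly in $x \in \mathbb{T}$, and combining this with the two preceding error bounds completes the proof. The analogous statement for $V_{2k}$ follows by the same argument using the involution $\textit{i}$ (which exchanges $U_m$ and $V_m$).

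The main obstacle is the necessity of the second reduction. Working directly with $U_{2k}$ is hopeless: it has roughly $2q_{n_{2k}}$ interval components, so Denjoy-Koksma at time $q_{n_{2k}}$ produces an error bound of order $1$, which is useless. It is precisely the combination of (E.3) (which provides that $U_{2k}$ and $U_{2k-1}$ differ by an asymptotically negligible set) and (E.2) (which guarantees that $U_{2k-1}$ has complexity $o(q_{n_{2k}})$) that makes the Denjoy-Koksma estimate apply to the coarser approximation $(A')_k^\sharp$.
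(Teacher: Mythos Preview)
Your proposal is correct and follows the same overall strategy as the paper: transfer the Birkhoff sums to the rotation $R_\alpha$ via the fiber bijection, approximate $A$ by a finite union of rectangles, and apply Denjoy--Koksma to the projection through $U_{2k-1}$, whose complexity is $o(q_{n_{2k}})$ by (E.2), with the passage between $U_{2k}$ and $U_{2k-1}$ controlled by (E.3).

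There is one organizational difference worth noting. The paper first replaces the base set $U_{2k}$ by $U_{2k-1}$, then replaces the orbit $T_{2k}^i$ by $T_{2k-1}^i$ (bounding the measure of points whose first $q_{n_{2k}}$ iterates differ by $2q_{n_{2k}}\|q_{n_{2k}}\alpha\|\le 2/a_{n_{2k}+1}$), and only then converts to $R_\alpha$ via the $T_{2k-1}$-invariance of $U_{2k-1}$; the interval approximation comes last. You instead convert to $R_\alpha$ immediately, using the $T_{2k}$-invariance of $U_{2k}$, and afterwards reduce only the observable from $(A')_k^*$ to the low-complexity $(A')_k^\sharp$. This is slightly more economical: by passing to $R_\alpha$ at the outset you never have to compare the $T_{2k}$- and $T_{2k-1}$-orbits pointwise. (Incidentally, your bound $\lambda'((A')_k^*\triangle(A')_k^\sharp)\le 2L/a_{n_{2k}+1}$ is not sharp---the factor $L$ is unnecessary, since the symmetric difference is contained in $\pi(U_{2k}\triangle U_{2k-1})$ regardless of $A'$---but it is of course sufficient.)
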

\begin{proof}
By \eqref{eq:EvenSymDiffGoesToZero}, we have that 
\begin{equation}\label{eq:SmallAsymSymDiff}
\lim_{k\rightarrow\infty}\lambda(U_{2k}\triangle U_{2k-1})=0.
\end{equation}
Thus, if $\epsilon>0$ and 
$$\lim_{k\rightarrow\infty}\lambda\left(\{(x,j)\in U_{2k}\,|\,\left|\frac{1}{q_{n_{2k}}}\sum_{i=0}^{q_{n_{2k}}-1}\chi_A(T^i_{2k}(x,j))-2\lambda(U_{2k-1}\cap A)\right|>\frac{\epsilon}{2}\}\right)=0,$$
we have that \eqref{eq:ConvergenceToHalfLebesgue1} holds. Thus, in order to prove Lemma \ref{lem:GoodErgodicSumsOnU_2k}, it suffices to show that 
\begin{equation}\label{eq:ConvergenceToHalfLebesgue1.5}
\lim_{k\rightarrow\infty}\lambda\left(\{(x,j)\in U_{2k}\,|\,\left|\frac{1}{q_{n_{2k}}}\sum_{i=0}^{q_{n_{2k}}-1}\chi_A(T^i_{2k}(x,j))-2\lambda(U_{2k-1}\cap A)\right|>\epsilon\}\right)=0
\end{equation}
for any $\epsilon>0$.\\
Fix now $\epsilon>0$ and note that, by \eqref{eq:SmallAsymSymDiff}, in order to prove \eqref{eq:ConvergenceToHalfLebesgue1.5}, all we need to show is that 
\begin{equation}\label{eq:ConvergenceToHalfLebesgue2}
\lim_{k\rightarrow\infty}\lambda\left(\{(x,j)\in U_{2k-1}\,|\,\left|\frac{1}{q_{n_{2k}}}\sum_{i=0}^{q_{n_{2k}}-1}\chi_A(T^i_{2k}(x,j))-2\lambda(U_{2k-1}\cap A)\right|>\epsilon\}\right)=0.
\end{equation}
Note now that for any $k\in\N$,
$$\lambda(\bigcup_{i=0}^{q_{n_{2k}}-1}\{(x,j)\in \mathbb T\times \Z_2\,|\, T^i_{2k}(x,j)\neq T^i_{2k-1}(x,j)\})\leq 2q_{n_{2k}}\|q_{n_{2k}}\alpha\|\leq \frac{2}{a_{n_{2k}+1}}.$$
So, by (E.3), in order to prove  \eqref{eq:ConvergenceToHalfLebesgue2} it suffices to show that 
\begin{multline}\label{eq:ConvergenceToHalfLebesgue3}
\lim_{k\rightarrow\infty}\lambda\left(\{(x,j)\in U_{2k-1}\,|\,\left|\frac{1}{q_{n_{2k}}}\sum_{i=0}^{q_{n_{2k}}-1}\chi_A(T^i_{2k-1}(x,j))-2\lambda(U_{2k-1}\cap A)\right|>\epsilon\}\right)\\
=0.
\end{multline}
Suppose now that $A$ is an interval in $\mathbb T\times \Z_2$ and let $\tilde A_k$ be the projection of $A\cap U_{2k-1}$ into $\mathbb T$. In other words, 
$$\tilde A_k:=\{x\in\T\,|\,(x,j)\in A\cap U_{2k-1}\}.$$ 
Observe that, by Lemma \ref{lem:BasicFactsAboutUalphaTalpha} item (iii),  for each $x\in\T$ there is a unique $j\in\Z_2$ with $(x,j)\in U_{2k-1}$. Thus, since $U_{2k-1}$ is $T_{2k-1}$-invariant, we have that for any $(x,j)\in U_{2k-1}$,
$$\frac{1}{q_{n_{2k}}}\sum_{i=0}^{q_{n_{2k}}-1}\chi_A(T^i_{2k-1}(x,j))=\frac{1}{q_{n_{2k}}}\sum_{i=0}^{q_{n_{2k}}-1}\chi_{A\cap U_{2k-1}}(T^i_{2k-1}(x,j))=\frac{1}{q_{n_{2k}}}\sum_{i=0}^{q_{n_{2k}}-1}\chi_{\tilde A_k}(R^ix).$$
By Lemma \ref{lem:BasicFactsAboutUalphaTalpha} item (ii), $U_{2k-1}$ is the disjoint union of at most $2\sum_{s=0}^{2k-1}q_{n_s}$ disjoint intervals. It now follows from  condition (E.2) that $\tilde A_k$ is the disjoint union of $o(q_{n_{2k}})$  intervals. Thus, by Denjoy-Koksma inequality, for any $x\in\T$, 
$$|\frac{1}{q_{n_{2k}}}\sum_{i=0}^{q_{n_{2k}}-1}\chi_{\tilde A_k}(R^ix)-\lambda'(\tilde A_k)|\leq \frac{\text{Var}(\chi_{\tilde A_k})}{q_{n_{2k}}}=\frac{o(q_{n_{2k}})}{q_{n_{2k}}},$$
where $\lambda'$ denotes the normalized Lebesgue measure on $\mathbb T$. Noting that 
$$\lambda'(\tilde A_k)=2\lambda(U_{2k-1}\cap A),$$
we see that when  $A$ is an interval, \eqref{eq:ConvergenceToHalfLebesgue3} holds for any given $\epsilon>0$.\\
We now assume that $A$ is arbitrary. Fix $\delta>0$ and let $A_1,...,A_N\subseteq \mathbb T\times \Z_2$ be disjoint intervals such that $\lambda(A\triangle \bigcup_{j=1}^NA_j)<\delta$. It follows that 
$$\frac{1}{q_{n_{2k}}}\sum_{i=0}^{q_{n_{2k}}-1}\int_{\mathbb T\times \Z_2}\Big|\chi_A(T^i_{2k-1}(x,j))-\chi_{\bigcup_{s=1}^NA_s}(T^i_{2k-1}(x,j))\Big|\text{d}\lambda(x,j)<\delta$$
and hence 
$$\lambda(\{(x,j)\in\mathbb T\times \Z_2\,|\,|\frac{1}{q_{n_{2k}}}\sum_{i=0}^{q_{n_{2k}}-1}\left(\chi_A(T^i_{2k-1}(x,j))-\chi_{\bigcup_{s=1}^NA_s}(T^i_{2k-1}(x,j))\right)|> \sqrt{\delta}\})\leq \sqrt{\delta}.$$
Picking $\delta\in (0,1)$ such that $\delta<\sqrt{\delta}<\frac{\epsilon}{4}$, we obtain,
\begin{multline*}
    \lim_{k\rightarrow\infty}\lambda\left(\{(x,j)\in U_{2k-1}\,|\,\left|\frac{1}{q_{n_{2k}}}\sum_{i=0}^{q_{n_{2k}}-1}\chi_A(T^i_{2k-1}(x,j))-2\lambda(U_{2k-1}\cap A)\right|>\epsilon\}\right)\\
    \leq \sqrt{\delta}\\
    + \lim_{k\rightarrow\infty}\lambda\left(\{(x,j)\in U_{2k-1}\,|\,\left|\frac{1}{q_{n_{2k}}}\sum_{i=0}^{q_{n_{2k}}-1}\chi_{\bigcup_{s=1}^NA_s}(T^i_{2k-1}(x,j))-2\lambda(U_{2k-1}\cap \bigcup_{s=1}^N A_s)\right|>\frac{\epsilon}{4}\}\right)\\
    \leq \sqrt{\delta}\\
    + \lim_{k\rightarrow\infty}\lambda\left(\{(x,j)\in U_{2k-1}\,|\,\sum_{s=1}^N\left|\frac{1}{q_{n_{2k}}}\sum_{i=0}^{q_{n_{2k}}-1}\chi_{A_s}(T^i_{2k-1}(x,j))-2\lambda(U_{2k-1}\cap A_s)\right|>\frac{\epsilon}{4}\}\right)\\
    \leq \sqrt{\delta}\\
    +\lim_{k\rightarrow\infty}\sum_{s=1}^N\lambda\left(\{(x,j)\in U_{2k-1}\,|\,\left|\frac{1}{q_{n_{2k}}}\sum_{i=0}^{q_{n_{2k}}-1}\chi_{A_s}(T^i_{2k-1}(x,j))-2\lambda(U_{2k-1}\cap A_s)\right|>\frac{\epsilon}{4N}\}\right)\\
    =\sqrt{\delta}.
\end{multline*}
Taking $\delta$ arbitrarily small, we complete the proof. 
\end{proof}
\subsection{The proof of Proposition \ref{thm:UniqueErgodicity}}
\begin{proof}[Proof of Proposition \ref{thm:UniqueErgodicity}]
Suppose for the sake of contradiction that $T$ is not uniquely ergodic. Then, by Lemma \ref{lem:NonUniqueErgForTxZ_2},  there exist two ergodic probability measures $\mu$, $\nu$ and a measurable set $A\subseteq \mathbb T\times \Z_2$ such that $\mu(A)=1$, $\nu(A)=0$, $\lambda(A)=\frac{1}{2}$, and $TA=A$ up to a set of $\lambda$- (and, hence, $\mu$- and $\nu$-) measure zero. We claim that for any $\epsilon\in(0,1/2)$, there exists a $k_\epsilon\in\N$ such that if $k>k_\epsilon$, then 
\begin{equation}\label{eq:ExtremeValuesOfAcapUk}
2\lambda(A\cap U_{k})\in [0,\epsilon]\cup [1-\epsilon,1].
\end{equation}
Indeed, by (E.4), 
$$\lim_{k\rightarrow\infty}\lambda(\bigcup_{s=0}^{q_{n_{2k}}-1}\{(x,j)\in\mathbb T\times\Z_2\,|\,T^s(x,j)\neq T^s_{2k}(x,j)\})=\lim_{k\rightarrow\infty}q_{n_{2k}}\sum_{s=2k+1}^\infty\|q_{n_s}\alpha\|=0.$$
Thus, by Lemma \ref{lem:GoodErgodicSumsOnU_2k} and the $T$-invariance of $A$,
\begin{multline*}
\frac{1}{2}=\lim_{k\rightarrow\infty}\lambda\left(\{(x,j)\in U_{2k}\,|\,\left|\frac{1}{q_{n_{2k}}}\sum_{i=0}^{q_{n_{2k}}-1}\chi_A(T^i(x,j))-2\lambda(U_{2k}\cap A)\right|\leq\epsilon\}\right)\\
=\lim_{k\rightarrow\infty}\lambda\left(\{(x,j)\in U_{2k}\,|\,\left|\chi_A(x,j)-2\lambda(U_{2k}\cap A)\right|\leq\epsilon\}\right)\\
=\lim_{k\rightarrow\infty}\Big(\lambda(\{(x,j)\in U_{2k}
\cap A\,|\,\left|1-2\lambda(U_{2k}\cap A)\right|\leq\epsilon\})\\
+\lambda(\{(x,j)\in U_{2k}\cap A^c\,|\,\left|2\lambda(U_{2k}\cap A)\right|\leq\epsilon\})\Big),
\end{multline*}
where $A^c=(\mathbb T\times \Z_2)\setminus A$. Note that, since $\epsilon\in (0,1/2)$, for each $k\in\N$, at  most one of $|1-2\lambda(U_{2k}\cap A)|\leq \epsilon$ and $|2\lambda(U_{2k}\cap A)|\leq \epsilon$ can hold. Also note that if neither of $|1-2\lambda(U_{2k}\cap A)|\leq \epsilon$ and $|2\lambda(U_{2k}\cap A)|\leq \epsilon$ holds, then 
\begin{multline*}
\lambda(\{(x,j)\in U_{2k}
\cap A\,|\,\left|1-2\lambda(U_{2k}\cap A)\right|\leq \epsilon\})\\
+\lambda(\{(x,j)\in U_{2k}\cap A^c\,|\,\left|2\lambda(U_{2k}\cap A)\right|\leq \epsilon\})=0.
\end{multline*}
Thus, for $k\in\N$ large enough, we have that exactly one of $|1-2\lambda(U_{2k}\cap A)|\leq \epsilon$ and $|2\lambda(U_{2k}\cap A)|\leq \epsilon$ holds. Thus, since $\lim_{k\rightarrow\infty}\lambda(U_{2k}\triangle U_{2k-1})=0$,  we see that \eqref{eq:ExtremeValuesOfAcapUk} holds.\\

To complete the proof let $\epsilon=\frac{1}{2(M+3)}\leq\frac{1}{12}$ and let $k>k_{\epsilon}$. By \eqref{eq:ExtremeValuesOfAcapUk}, we have 
$$2\lambda(U_{2k}\cap A),2\lambda(U_{2k+1}\cap A)\in [0,\epsilon]\cup [1-\epsilon,1].$$
If $2\lambda(U_{2k}\cap A)\geq 1-\epsilon$ and $2\lambda(U_{2k+1}\cap A^c)\geq 1-\epsilon$, then, by \eqref{eq:SymDiffU_2kU_2k+1},
$$1=\lambda(A\triangle A^c)\leq\lambda(A\triangle U_{2k})+\lambda(U_{2k}\triangle U_{2k+1})+\lambda(A^c\triangle U_{2k+1})\leq 2\epsilon+\frac{1}{3}\leq \frac{1}{2}<1.$$
Thus, we cannot have that  $2\lambda(U_{2k}\cap A)\geq 1-\epsilon$ and $2\lambda(U_{2k+1}\cap A^c)\geq 1-\epsilon$. 
Similarly, we cannot have that $2\lambda(U_{2k}\cap A^c)\geq 1-\epsilon$ and $2\lambda(U_{2k+1}\cap A)\geq 1-\epsilon$.\\
If $2\lambda(U_{2k}\cap A^c)\geq 1-\epsilon$ and $2\lambda(U_{2k+1}\cap A^c)\geq 1-\epsilon$, 
then, by \eqref{eq:SymDiffU_2kU_2k+1},
$$\frac{1}{M+2}<\lambda(U_{2k}\triangle U_{2k+1})\leq \lambda(U_{2k}\triangle A^c)+\lambda(U_{2k+1}\triangle A^c)\leq 2\epsilon=\frac{1}{M+3}.$$
This proves that $2\lambda(U_{2k}\cap A^c)\geq 1-\epsilon$ and $2\lambda(U_{2k+1}\cap A^c)\geq 1-\epsilon$ cannot hold.  
Similarly,    $2\lambda(U_{2k}\cap A)\geq 1-\epsilon$ and $2\lambda(U_{2k+1}\cap A)\geq 1-\epsilon$ is impassible. Since in every one  of these four cases we reach a contradiction, we must have that $T$ is uniquely ergodic. 
\end{proof}


\end{document}